 \newtheorem{theorem}{Theorem} 
 \newtheorem{lemma}[theorem]{Lemma}
 \newtheorem{corollary}[theorem]{Corollary}
 \newtheorem{proposition}[theorem]{Proposition} 
 \newdefinition{remark}[theorem]{Remark}
 \newdefinition{definition}[theorem]{Definition}
 \newdefinition{example}[theorem]{Example}
 \newdefinition{fact}[theorem]{Fact}
 \newdefinition{notation}[theorem]{Notation}
 \newdefinition{framework}{Framework}
\newcommand{\pos}[1]{{\langle}#1{\rangle}} 
\newcommand{\FOL}{{\mathsf{FOL}}}
\newcommand{\HDFOLR}{{\mathsf{HDFOLR}}}
\newcommand{\HFOLS}{{\mathsf{HFOLS}}}
\newcommand{\HDPL}{{\mathsf{HDPL}}}
\newcommand{\HFOLR}{{\mathsf{HFOLR}}}
\newcommand{\HPL}{{\mathsf{HPL}}}
\newcommand{\RFOHL}{{\mathsf{RFOHL}}}
\newcommand{\Sig}{\mathsf{Sig}}
\newcommand{\Mod}{\mathsf{Mod}}
\newcommand{\Sen}{\mathsf{Sen}}
\newcommand{\alt}{\enspace|\enspace}
\newcommand{\act}{\mathfrak{a}}
\newcommand{\ari}{{\mathsf{ar}}}
\newcommand{\nom}{{\mathsf{n}}}
\newcommand{\card}{{\mathsf{card}}}
\newcommand{\rigid}{{\mathtt{r}}}
\newcommand{\flexible}{{\mathtt{f}}}
\newcommand{\cons}{{\mathtt{c}}}
\newcommand{\loose}{{\mathtt{l}}}
\newcommand{\ext}{{\mathtt{e}}}
\newcommand{\any}{{\mathtt{any}}}
\newcommand{\Cat}{\mathbb{C}\mathsf{at}}
\newcommand{\Set}{\mathbb{S}\mathsf{et}}
\newcommand{\Union}{\textit{(Union)} }
\newcommand{\Monotonicity}{\textit{(Monotonicity)}}
\newcommand{\Transitivity}{\textit{(Transitivity)}}
\newcommand{\Translation}{\textit{(Translation)}}
\newcommand{\N}{{\mathbb{N}}}
\renewcommand{\P}{\mathcal{P}}
\renewcommand{\L}{\mathcal{L}}
\newcommand{\F}{\mathsf{F}}
\newcommand{\A}{\mathfrak{A}}
\newcommand{\dra}{\dashrightarrow}
\newcommand{\x}{\mathtt{x}}
\newcommand{\z}{\mathtt{z}}
\newcommand{\Prop}{\mathtt{Prop}}
\newcommand{\bbsemicolon}{%
  \scalerel*{%
    \hbox{\usefont{U}{bbold}{m}{n} ;}%
  }{;}%
}
\newcommand{\comp}{\mathbin{\bbsemicolon}}
\newcommand{\modelsm}{\mathrel{\models\!\!\!|}}
\newcommand{\red}{\!\upharpoonright\!}
\newcommand{\At}[1]{\mathsf{at}_{#1}\, }
\newcommand{\at}[1]{@_{#1}\,}
\newcommand{\store}[1]{{\downarrow}#1\,{\cdot}\,}
\newcommand{\Forall}[1]{\forall #1\,{\cdot}\,}
\newcommand{\Exists}[1]{\exists #1\,{\cdot}\,}
\newcommand{\inferrule}[2]{{\small$\displaystyle\frac{#1}{#2}$}}
\newcommand{\Space}{\hspace{4mm}}
\newlength{\PS@lastparam}
\newlength{\PSlastparam}
\newcommand{\PSlp}{%
  \setlength{\PSlastparam}{\PS@lastparam}%
  \the\PSlastparam
}
\def\PS@sub@lastparam{}
\newcommand{\PS@numwidth}{99}
\newcommand{\PSnumwidth}[1]{%
  \renewcommand{\PS@numwidth}{#1}%
}
\newcommand{\PS@style}{\small}
\newcommand{\PS@numstyle}{\footnotesize}
\newlength{\PSindent}
\newlength{\PS@extraindent}
\newlength{\PSpre}
\newlength{\PSpost}
\newlength{\PS@Nwidth}
\newlength{\PS@Swidth}
\newlength{\PS@Ewidth}
\newlength{\PScolsep}
\newcommand{\PS@rownumber}{%
  \ifPS@subsubsteps
  \thePSsubstepc.%
  \the\numexpr\value{PSsubsubstepc}+1\relax
  \else
  \ifPS@substeps
  \thePSstepc.%
  \the\numexpr\value{PSsubstepc}+1\relax
  \else
  \the\numexpr\value{PSstepc}+1\relax
  \fi\fi
}
\newcommand{\PS@step}{%
  \ifPS@subsubsteps
  \refstepcounter{PSsubsubstepc}%
  \else
  \ifPS@substeps
  \refstepcounter{PSsubstepc}%
  \else
  \refstepcounter{PSstepc}
  \fi\fi%
}
\newif\ifPS@inprogress
\newif\ifPS@substeps
\newif\ifPS@subsubsteps
\newif\ifPS@continued
\newif\ifPS@subcontinued
\newcounter{PSc}
\newcounter{PSstepc}[PSc]
\newcounter{PSsubstepc}[PSstepc]
\renewcommand{\thePSsubstepc}{\thePSstepc.\arabic{PSsubstepc}}
\newcounter{PSsubsubstepc}[PSsubstepc]
\newenvironment{proofsteps}[1]{%
  \global\settowidth{\PS@lastparam}{\PS@style\hspace*{#1}}
  \ifPS@continued\else\refstepcounter{PSc}\fi
  \begingroup
  \setlength{\LTpre}{\PSpre}%
  \setlength{\LTpost}{\PSpost}%
  
  \setlength{\tabcolsep}{0pt}
  \noindent\PS@style
  \settowidth{\PS@Nwidth}{\PS@numstyle\PS@numwidth}%
  \setlength{\PS@Swidth}{#1}%
  \addtolength{\PS@Swidth}{-\PS@extraindent}%
  \setlength{\PS@Ewidth}{\linewidth}%
  \addtolength{\PS@Ewidth}{-\PSindent}%
  \addtolength{\PS@Ewidth}{-\PS@extraindent}%
  \addtolength{\PS@Ewidth}{-\PS@Nwidth}%
  \addtolength{\PS@Ewidth}{-\PScolsep}%
  \addtolength{\PS@Ewidth}{-\PS@Swidth}%
  \addtolength{\PS@Ewidth}{-\PScolsep}%
  \PS@inprogresstrue
  \longtable{%
    @{\hspace*{\PSindent}\hspace*{\PS@extraindent}\makebox[\PS@Nwidth][r]{\PS@rownumber}}%
    @{\hskip\PScolsep}>{\PS@step}p{\PS@Swidth}%
    @{\hskip\PScolsep}>{\footnotesize\raggedright\arraybackslash}p{\PS@Ewidth}%
  }%
}{%
  \ifPS@inprogress
  \addtocounter{table}{-1}%
  \endlongtable  
  \endgroup
  \PS@continuedfalse
  \PS@inprogressfalse
  \else\fi
}
\newcommand{\PSbreak}[1]{%
  \endproofsteps
  \par\medskip
  #1
  \medskip\par
  \PS@continuedtrue
  \proofsteps{\PS@lastparam}%
}
\newif\ifPS@sub@inprogress
\newif\ifPS@laststep
\newcommand{\laststep}{\global\PS@laststeptrue}
\newif\ifPS@lastsubstep
\newcommand{\lastsubstep}{\global\PS@lastsubsteptrue}
\newcommand{\adjustcol}[1]{%
  \global\advance\@colroom-#1%
}
\newcommand{\psqed}{%
  \vspace{-\baselineskip}\vspace{-1\smallskipamount}
}
\newcommand*{\pcformat}[1]{%
  [\;{\normalfont\itshape #1}\;]%
}
\newenvironment{proofcases}[1][]{%
  \description[font=\pcformat, leftmargin=\parindent, #1]%
}{\enddescription}
\journal{Annals of Pure and Applied Logic}
\begin{document}

\begin{frontmatter}



\title{Omitting Types Theorem in hybrid-dynamic first-order logic with rigid symbols}


\author[a1]{Daniel G\u{a}in\u{a}}
\ead{daniel@imi.kyushu-u.ac.jp} 

\author[a2]{Guillermo Badia}
\ead{guillebadia89@gmail.com}

\author[a3]{Tomasz Kowalski}
\ead{T.Kowalski@latrobe.edu.au}

\address[a1]{Institute of Mathematics for Industry, Kyushu University} 
\address[a2]{The University of Queensland \& Johannes Kepler University Linz} 
\address[a3]{La Trobe University}

\begin{abstract}
In the the present contribution, we prove an Omitting Types Theorem (OTT) for an arbitrary fragment of hybrid-dynamic first-order logic with rigid symbols (i.e. symbols with fixed interpretations across worlds) closed under \emph{negation} and \emph{retrieve}.
The logical framework can be regarded as a parameter and it is instantiated by some well-known hybrid and/or dynamic logics from the literature.
We develop a \emph{forcing} technique and then we study a \emph{forcing property} based on local satisfiability, which lead to a refined proof of the OTT. 
For uncountable signatures, the result requires compactness, while for countable signatures, compactness is not necessary.
We apply the OTT to obtain upwards and downwards  L\" owenheim-Skolem theorems for our logic, as well as a completeness theorem for its \emph{constructor-based} variant. The main result of this paper can easily be recast in the institutional model theory framework, giving it a higher level of generality.
\end{abstract}



\begin{keyword}
 Institution \sep hybrid logic \sep dynamic logic \sep forcing \sep Omitting Types Theorem



\end{keyword}

\end{frontmatter}


\section{Introduction}

\paragraph{Kripke semantics and hybrid-dynamic logics} 
Modal logics are formalisms for describing and reasoning about multi-graphs.
These structures appear naturally in many areas of research.
For example, in knowledge representation formalisms, role assertions describe relationships between individuals/objects grouped into classes determined by concepts.
Linguistic information can be represented by multi-graphs.
Other mathematical entities that can be viewed as multi-graphs are transition systems, derivation trees, semantic networks, etc.
Therefore, it is useful to think of a Kripke structure in the following way:
\begin{itemize}
\item a frame consisting of a set of nodes together with a family of (typed) edge sets, and 
\item a mapping from the set of nodes to a class of local models that gives meaning to the nodes.
\end{itemize}
However, modal logics have no mechanisms for referring to the individual nodes in such structures, which is necessary when they are used as representation formalisms.
Hybrid logics increase the expressive power of ordinary modal logics by adding an additional sort of symbols called \emph{nominals} such that each nominal is true relative to exactly one point.
The history of hybrid logics goes back to Arthur Prior's work~\cite{prior67}. 
Further developments can be found in works such as \cite{ArecesB01,ArecesBM01,ArecesBM03,brau11}.
The research on hybrid logics received an additional boost due to the recent interest in the logical foundations of the \emph{reconfiguration paradigm}.
Dynamic logics provide a powerful language for describing programs and reason about their correctness.
Logics of programs have the roots in the work in the late 1960s of computer scientists interested in assigning meaning to programming languages and finding a rigorous standard for proofs about the programs.
There is a significant body of research on this topic;
\cite{passay91} and \cite{HarelKT01} are two prominent examples among many others.
In the present contribution, we consider a logical system endowed with features from both hybrid and dynamic logics, which is built on top of many-sorted first-order logic with equality. 
Despite its complexity, it displays a certain simplicity due to its modular construction, which is a reminiscent of the hybridization of institutions from~\cite{martins}.

\paragraph{Applications of hybrid-dynamic logics}
The application domain of the work reported in this contribution refers to a broad range of reconfigurable systems
whose states or configurations can be presented explicitly, based on some kind of context-independent data types,
and for which we distinguish the computations performed at the local/configuration level from the dynamic evolution of the configurations.
This suggests a two-layered approach to the design and analysis of reconfigurable systems, involving:
\begin{itemize}
\item \emph{a local view}, which amounts to describing the structural properties of configurations, and
\item \emph{a global view}, which corresponds to a specialized language for specifying and reasoning about the way system configurations evolve.
\end{itemize}
Since configurations can be represented by local models and the dynamic evolution of configurations can be depicted by the accessibility relations of the Kripke structures, hybrid-dynamic logics and their fragments are acknowledged as suitable for describing and reasoning about systems with reconfigurable features.
In addition, it is well-known (see~e.g.,~\cite{Blackburn00}) that hybrid logics specialize to temporal logics~\cite{DBLP:journals/jacm/HalpernS91}, description logics~\cite{baader2017} and feature logics~\cite{ROUNDS1997475}.
Therefore, the area of applications of the present work is
rather large and it involves knowledge representation, computational linguistics, artificial intelligence, biomedical informatics, semantic networks and ontologies.
We recommend \cite{Blackburn00} for more information on this topic.

\paragraph{Omitting Types Theorem (OTT)}
In this paper we focus on obtaining an OTT for hybrid-dynamic first-order logic with rigid symbols and sufficiently expressive fragments.  
Observe that an OTT  for the full logic would not necessarily have given us the property for its fragments. 
For this reason, we work within an arbitrary fragment of hybrid-dynamic first-order logic with rigid symbols, which can be viewed as a parameter.   
Thus the generality of our proofs is an important feature, since the parameter is instantiated by many concrete hybrid and/or dynamic logical systems which appear in the literature.
We provide a version of OTT for countable languages without any restrictions and a version for uncountable languages provided that the fragment in question is compact. 
We show that compactness is necessary at least for one fragment of the underlying logic. 
This situation is similar to that described in a  theorem by Lindstr\"om for first-order logic with only relational symbols~\cite{lind78}. 
The OTT for countable first-order languages is a result originally from~\cite{gr1961}. 
The extension of the OTT to uncountable languages is from \cite{chang1964}. 
One of the best known applications of the OTT is a simple proof of the completeness of $\omega$-logic 
(a more complex proof without using the OTT can be found in~\cite{orey1956}).
In the present contribution, we develop this idea further to provide one important application of OTT to computer science, which is described briefly in the following paragraph.

Formal  methods practitioners are often interested in properties that are true of a restricted class of models whose elements are reachable by some constructor operations~\cite{Bidoit-COL,gai-cbl,gai-int}. 
For this reason, several algebraic specification languages incorporate features to express reachability and to deal with constructors like, for instance, Larch \cite{guttag-larch}, CASL \cite{astesiano-CASL} or CITP \cite{citp13,gai-apsec}.
This situation is similar to the one in classical model theory, where the models of $\omega$-logic are reachable by the constructors \emph{zero} and \emph{successor}.
In the present contribution, the completeness  of $\omega$-logic is generalized by replacing the signature of arithmetics with an arbitrary vocabulary for which we distinguish a set of constructor operators.
Then we apply OTT to obtained completeness of the logical system resulted from restricting the semantics of the underlying fragment of hybrid-dynamic first-order logic with rigid symbols to constructor-based Kripke structures.
 
\paragraph{Institutions} 
Our approach is rooted in institutional model theory \cite{ins}, which provides a unifying setting for studying logical systems using category theory.
The concept of institution formalizes the intuitive notion of logic, including syntax, semantics and the satisfaction relation between them.
The theory of institutions is one major approach in universal logic which promotes the development of logical properties at the most general level of abstraction.
However, to make the study available to a broader audience, the authors decided to present the results in a framework given by a concrete logical system, that is, hybrid-dynamic first-order logic with rigid symbols.
It should be obvious, at least for the experts in institutions, that the main result, OTT, can be easily cast in a more general framework such as the one provided by the definition of stratified institution~\cite{dia-ult-kripke}, similarly to the work reported in~\cite{gai-godel}. 
Therefore, the area of applications of our results covers a much broader range of hybrid-dynamic logics than the one mentioned in the present contribution.
 
\paragraph{Forcing} 
OTT is proved by applying a forcing technique, a method of constructing models based on consistency results.
Forcing was invented by Paul Cohen~\cite{cohen63,cohen64} in set theory to prove the independence of the continuum hypothesis from the other axioms of Zermelo-Fraenkel set theory. 
Robinson~\cite{rob71} developed an analogous forcing method in model theory.
In institutional model theory, forcing was introduced in \cite{gai-com} to prove a G\"{o}del Completeness Theorem. 
It was developed further for stratified institutions~\cite{gai-godel} to prove the completeness of a large class of hybrid logics.
The present contribution extends the forcing introduced in  \cite{gai-godel} to cover logics with both hybrid and dynamic features and studies a forcing property based on local satisfiability to deliver an Omitting Types Theorem.

\paragraph{Structure of the paper}
The article is arranged as follows: \S \ref{1} reviews the framework of
many-sorted first-order logic in the institutional setting. \S \ref{2}
introduces all the necessary preliminaries about hybrid dynamic first-order
logic  with rigid symbols, which expands the base system introduced  in  \S
\ref{1}.  \S\ref{3} presents some necessary technical notions for the arguments
that follow later, such as that of a reachable model and a language
fragment. \S\ref{4} develops the basics of the forcing technique in our present
context.  \S\ref{5} presents a semantic forcing property that is instrumental in
proving the main result of the paper. \S\ref{6} contains the proof of the main
result, an Omitting Types Theorem for both countable and uncountable
signatures. \S\ref{7} gives an application of the main result by establishing a
completeness theorem for the constructor-based variant of the logic. \S\ref{8}
establishes L\"owenheim-Skolem theorems (upwards and downwards) as consequences
of the OTT. \S\ref{9} shows that for a certain fragment of the logic in question
compactness is a necessary condition for the OTT for uncountable signatures to
hold.

\section{Many-sorted first-order logic (\textbf{FOL})}\label{1}
 In this section, we recall the definition of first-order logic as presented in institutional model theory \cite{ins}.
\paragraph{Signatures}
Signatures are of the form $(S,F,P)$, where $S$ is a set of sorts,
$F=\{F_{\ari\to s}\}_{(\ari,s)\in S^*\times S}$ is a \textup{(}$S^*\times S$
-indexed\textup{)}
set of operation symbols, and $P=\{P_\ari\}_{\ari\in S^*}$ is a
\textup{(}$S^*$-indexed\textup{)}
set of relation symbols.
If $\ari=\varepsilon$ then an element of $F_{\ari\to s}$ is called a
\emph{constant symbol}. Generally, $\ari$ ranges over arities, which are understood here as strings of sorts; in other words an arity gives the number of arguments together with their sorts.
We overload the notation and let $F$ and $P$ also denote $\biguplus_{(\ari,s)\in
  S^*\times S}F_{\ari\to s}$ and $\biguplus_{\ari\in S^*}P_\ari$, respectively.  
Therefore, we may write $\sigma\in F_{\ari\to s}$ or $(\sigma\colon\ari\to s)
\in F$; both have the same meaning, which is: $\sigma$ is an operation symbol
of type $\ari\to s$.  
Throughout this paper, we let $\Sigma$, $\Sigma'$ and $\Sigma_i$ to range over first-order signatures of the form $(S,F,P)$, $(S',F',P')$ and $(S_i,F_i,P_i)$, respectively. 

\paragraph{Signature morphisms}
A number of usual tricks, such as adding constants, but
also, importantly, quantification, are viewed as expansions of  
the signature, so moving between signatures is common.
To make such transitions smooth, a notion of a \emph{signature morphism} is introduced. 
A signature morphism $\varphi\colon \Sigma\to \Sigma'$ is a triple
$\chi=(\chi^{st},\chi^{op},\chi^{rl})$ of maps:
\begin{enumerate*}[label=(\alph*)]
\item~$\chi^{st}\colon S\to S'$,
\item~$\chi^{op}=\{\chi^{op}_{\ari\to s}\colon F_{\ari\to s}\to F'_{\chi^{st}(\ari)\to \chi^{st}(s)}\mid \ari\in S^*,s\in S\}$, and
\item~$\chi^{rl}=\{\chi^{rl}_{\ari}\colon P_{\ari}\to P'_{\chi^{st}(\ari)}\mid \ari\in S^*\}$.
\end{enumerate*}
When there is no danger of confusion, we may let $\chi$ denote either of
$\chi^{st}$, $\chi^{op}_{\ari\to s}$, $\chi^{rl}_{\ari}$. 
\begin{fact}
First-order signature morphisms form a category $\Sig^\FOL$ under the
componentwise composition as functions.
\end{fact}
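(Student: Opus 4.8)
The statement to prove is that first-order signature morphisms, under componentwise composition, form a category $\Sig^{\FOL}$. This is a foundational bookkeeping result, so the plan is to verify the category axioms directly from the definition of a signature morphism as a triple $\chi=(\chi^{st},\chi^{op},\chi^{rl})$.

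First I would pin down the data. Objects are first-order signatures $(S,F,P)$; morphisms $\chi\colon\Sigma\to\Sigma'$ are triples of maps subject to the typing constraints in clauses (a)--(c), namely $\chi^{st}\colon S\to S'$ together with families $\chi^{op}_{\ari\to s}\colon F_{\ari\to s}\to F'_{\chi^{st}(\ari)\to\chi^{st}(s)}$ and $\chi^{rl}_{\ari}\colon P_{\ari}\to P'_{\chi^{st}(\ari)}$, where $\chi^{st}$ is extended to arities by $\chi^{st}(s_1\dots s_n)=\chi^{st}(s_1)\dots\chi^{st}(s_n)$. The identity morphism $1_\Sigma$ is the triple of identity maps on $S$, on each $F_{\ari\to s}$, and on each $P_\ari$; one checks the typing constraint holds trivially since $1_S(\ari)=\ari$ and $1_S(s)=s$.

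Next I would define composition. Given $\chi\colon\Sigma\to\Sigma'$ and $\chi'\colon\Sigma'\to\Sigma''$, set $(\chi';\chi)$ — or rather $\chi'\circ\chi$ — to be the componentwise composite: $(\chi'\circ\chi)^{st}=\chi'^{st}\circ\chi^{st}$, and on operation symbols $(\chi'\circ\chi)^{op}_{\ari\to s}=\chi'^{op}_{\chi^{st}(\ari)\to\chi^{st}(s)}\circ\chi^{op}_{\ari\to s}$, and similarly for the relational component. The key verification here is that this composite has the right codomain: the target of $\chi^{op}_{\ari\to s}$ is $F'_{\chi^{st}(\ari)\to\chi^{st}(s)}$, which is exactly the index at which $\chi'^{op}$ is applied, and the target of that is $F''_{\chi'^{st}(\chi^{st}(\ari))\to\chi'^{st}(\chi^{st}(s))}=F''_{(\chi'^{st}\circ\chi^{st})(\ari)\to(\chi'^{st}\circ\chi^{st})(s)}$, as required. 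This uses that $\chi'^{st}$ extended to arities commutes with composition, i.e. $\chi'^{st}(\chi^{st}(\ari))=(\chi'^{st}\circ\chi^{st})(\ari)$, which follows pointwise from the definition of the extension to strings.

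Finally, associativity and the identity laws follow component by component from the corresponding laws for composition of functions: in each of the three components we are simply composing ordinary functions, for which associativity and unitality are standard. I do not anticipate any real obstacle — the only point requiring a moment's care is checking that the typing indices line up under composition, which is the computation in the previous paragraph; everything else is immediate from the fact that $\mathbb{S}\mathsf{et}$ is a category.
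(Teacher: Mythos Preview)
Your proposal is correct and constitutes the standard verification. The paper itself offers no proof of this Fact at all --- it is stated as folklore and left to the reader --- so your explicit check of the category axioms (well-definedness of composition via the arity-reindexing computation, associativity, and identities, all inherited componentwise from $\Set$) is exactly what one would supply if asked to fill in the details.
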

\paragraph{Models} Given a signature $\Sigma$, a $\Sigma$-model is a triple
$$
\A=(\{\A_s\}_{s\in S},\{\A_\sigma\}_{(\ari,s)\in S^*\times S,\sigma\in F_{\ari\to
    s}},\{\A_\pi\}_{\ari\in S^*,\pi\in P_\ari})
$$
interpreting each sort $s$ as a non-empty set $\A_s$, each operation symbol $\sigma\in F_{\ari\to s}$ as a function $\A_\sigma\colon \A_\ari\to \A_s$ (where $\A_\ari$ stands for $\A_{s_1}\times\ldots\times \A_{s_n}$ if $\ari=s_1\ldots s_n$), and each relation symbol $\pi\in P_\ari$ as a relation $\A_\pi\subseteq \A_\ari$.  
Morphisms between models are the usual $\Sigma$-homomorphisms, i.e., $S$-sorted
functions that preserve the structure.
\begin{fact}
For any signature $\Sigma$, the $\Sigma$-homomorphisms form a category $\Mod^\FOL(\Sigma)$ under the obvious composition as many-sorted functions. 
\end{fact}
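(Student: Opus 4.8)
The plan is to check the three pieces of data of a category together with the two axioms, all of which reduce to sortwise reasoning in $\Set$. Fix $\Sigma = (S,F,P)$. The candidate identity on a $\Sigma$-model $\A$ is the $S$-sorted family $1_\A = \{1_{\A_s}\}_{s\in S}$ of identity functions. First I would observe that $1_\A$ is a $\Sigma$-homomorphism: for every $\sigma \in F_{\ari\to s}$ and $a \in \A_\ari$ we have $1_{\A_s}(\A_\sigma(a)) = \A_\sigma(a) = \A_\sigma(1_\ari(a))$, and for every $\pi \in P_\ari$, $a \in \A_\pi$ trivially gives $1_\ari(a) = a \in \A_\pi$.

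Next, given $\Sigma$-homomorphisms $h\colon \A \to \B$ and $g\colon \B \to \E$, I would define the composite $g \circ h$ componentwise by $(g\circ h)_s = g_s \circ h_s \colon \A_s \to \E_s$. The only step that is not immediate is that this $S$-sorted function is again a $\Sigma$-homomorphism, and it is obtained by chaining the homomorphism clauses for $h$ and for $g$: for $\sigma \in F_{\ari\to s}$ and $a \in \A_\ari$,
\[
(g\circ h)_s(\A_\sigma(a)) = g_s\bigl(h_s(\A_\sigma(a))\bigr) = g_s\bigl(\B_\sigma(h_\ari(a))\bigr) = \E_\sigma\bigl(g_\ari(h_\ari(a))\bigr) = \E_\sigma\bigl((g\circ h)_\ari(a)\bigr),
\]
and for $\pi \in P_\ari$, $a \in \A_\pi$ implies $h_\ari(a) \in \B_\pi$, which in turn implies $(g\circ h)_\ari(a) = g_\ari(h_\ari(a)) \in \E_\pi$.

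Finally, I would note that associativity and the unit laws are inherited directly from the category $\Set$ (equivalently, from the product category $\Set^S$), since composition and identities were defined sortwise: $(f\circ g)\circ h$ and $f\circ(g\circ h)$ coincide in each sort $s$ because $(f_s\circ g_s)\circ h_s = f_s\circ(g_s\circ h_s)$, and $1_\B \circ h = h = h \circ 1_\A$ holds sortwise. There is no genuine obstacle in this argument; the sole substantive point is the closure of $\Sigma$-homomorphisms under composition established above, while everything else is routine bookkeeping lifted from $\Set$.
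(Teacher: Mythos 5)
Your proof is correct and is exactly the routine verification the paper leaves implicit when it states this as a Fact without proof: identities and sortwise composites of $\Sigma$-homomorphisms are again $\Sigma$-homomorphisms, and the category axioms are inherited componentwise from $\Set$. Nothing is missing.
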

For any signature morphism $\chi\colon \Sigma\to \Sigma'$, 
the reduct functor $\_\red_\chi\colon\Mod(\Sigma')\to\Mod(\Sigma)$ is defined as follows: 
\begin{enumerate}
\item The reduct $\mathfrak{A}'\red_\chi$ of a $\Sigma'$-model $\mathfrak{A}'$ is a defined by
  $({\A'\red_\chi})_x=\A'_{\chi(x)}$ for each sort $s\in S$, operation symbol $x\in F$ or relation symbol $x\in P$.
 Note that, unlike the single-sorted case, the reduct functor modifies the universes of models. 
 For the universe of $\A'\red_\chi$ is $\{\A'_{\chi(s)}\}_{s\in S}$, which means that the sorts outside the image of $S$ are discarded. 
 Otherwise, the notion of reduct is standard. 

\item The reduct $h'\red_\chi$ of a homomorphism $h'$ is defined by
  $(h'\red_\chi)_s=h'_{\chi(s)}$ for all sorts $s\in S$. 
\end{enumerate}
\begin{fact}
$\Mod^\FOL$ becomes a functor $\Sig^\FOL\to \Cat^{op}$, with $\Mod^\FOL(\chi)(h') = {h'\red_\chi}$ for each signature morphism $\chi\colon\Sigma\to\Sigma'$ and each $\Sigma'$-homomorphism $h'$.
\end{fact}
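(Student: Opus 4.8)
The plan is to check functoriality in two stages: first that, for each fixed signature morphism $\chi\colon\Sigma\to\Sigma'$, the reduct operation $\_\red_\chi$ is itself a functor $\Mod^\FOL(\Sigma')\to\Mod^\FOL(\Sigma)$, and then that the assignment $\chi\mapsto{\_\red_\chi}$ preserves identities and composites of signature morphisms --- recalling that composition in $\Cat^{op}$ is taken in reverse, so that the reduct along a composite $\chi;\chi'\colon\Sigma\to\Sigma''$ must agree with ``reduce along $\chi'$, then reduce along $\chi$''.

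For the first stage I would verify well-definedness on objects and on arrows, and then preservation of identities and composition. If $\A'$ is a $\Sigma'$-model, then $\A'\red_\chi$ has nonempty carriers $\A'_{\chi(s)}$; for each $(\sigma\colon\ari\to s)\in F$ the interpretation $\A'_{\chi(\sigma)}$ is a function $\A'_{\chi(\ari)}\to\A'_{\chi(s)}$, that is, a function $(\A'\red_\chi)_\ari\to(\A'\red_\chi)_s$; and for each $\pi\in P_\ari$ we have $\A'_{\chi(\pi)}\subseteq\A'_{\chi(\ari)}=(\A'\red_\chi)_\ari$. Hence $\A'\red_\chi$ is a $\Sigma$-model. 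If $h'\colon\A'\to\B'$ is a $\Sigma'$-homomorphism, then the $S$-sorted family with components $(h'\red_\chi)_s=h'_{\chi(s)}$ is a $\Sigma$-homomorphism $\A'\red_\chi\to\B'\red_\chi$, because the homomorphism condition for an operation symbol $\sigma$ of $\Sigma$ and the inclusion expressing preservation of a relation symbol $\pi$ of $\Sigma$ are exactly those already satisfied by $h'$ for the symbols $\chi(\sigma)$ and $\chi(\pi)$ of $\Sigma'$. Preservation of identities and of composition by $\_\red_\chi$ is then immediate, since on homomorphisms both are computed componentwise over the sort-indexed families: $((1_{\A'})\red_\chi)_s=(1_{\A'})_{\chi(s)}=1_{(\A'\red_\chi)_s}$ and $((g'\circ h')\red_\chi)_s=(g'\circ h')_{\chi(s)}=(g'\red_\chi)_s\circ(h'\red_\chi)_s$.

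For the second stage, unwinding the definition of the reduct along $1_\Sigma$ gives $(\A\red_{1_\Sigma})_x=\A_{1_\Sigma(x)}=\A_x$ for every sort, operation, or relation symbol $x$, and likewise on homomorphisms, so $\_\red_{1_\Sigma}$ is the identity functor on $\Mod^\FOL(\Sigma)$. For composable $\chi\colon\Sigma\to\Sigma'$ and $\chi'\colon\Sigma'\to\Sigma''$, and any $\Sigma''$-model $\A''$ and symbol $x$ of $\Sigma$, we compute $(\A''\red_{\chi;\chi'})_x=\A''_{\chi'(\chi(x))}=(\A''\red_{\chi'})_{\chi(x)}=((\A''\red_{\chi'})\red_\chi)_x$, with the matching identity on homomorphisms; this says exactly that $\_\red_{\chi;\chi'}={\_\red_\chi}\circ{\_\red_{\chi'}}$, which is the functoriality condition for $\Mod^\FOL$ regarded as a functor valued in $\Cat^{op}$. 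I do not expect a genuine obstacle here: the only points needing attention are the contravariance --- the reduct along a composite factors in the opposite order --- and the many-sorted subtlety flagged in the text above, namely that a reduct really does change the universes by discarding the sorts not in the image of $\chi^{st}$; both are handled mechanically once the definitions are expanded componentwise.
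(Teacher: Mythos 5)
Your proof is correct and is exactly the routine componentwise verification that the paper leaves implicit by stating this as a Fact without proof: reduct is a functor for each fixed $\chi$, and the assignment $\chi\mapsto\_\red_\chi$ sends identities to identities and composites to reversed composites. Nothing is missing, and the two points you flag (contravariance and the change of universes in the many-sorted setting) are indeed the only places where care is needed.
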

\paragraph{Sentences} 
We assume a countably infinite set of variable names $\{v_i\mid i<\omega\}$.
A variable for a signature $\Sigma$ is a triple $\pos{v_i,s,\Sigma}$, where $v_i$ is a variable name, and $s$ is a sort in $\Sigma$.
Given a signature $\Sigma$, the $S$-sorted set of $\Sigma$-terms is denoted by $T_\Sigma$.
The set $\Sen^\FOL(\Sigma)$ of sentences over $\Sigma$ is given by the following grammar:
$$\gamma \Coloneqq  t= t' \mid \pi(t_1,\ldots,t_n)\mid \neg\gamma \mid \vee\Gamma \mid \Exists{X}\gamma'$$ 
where
\begin{enumerate*}[label=(\alph*)]
\item~$t= t'$ is an equation with $t,t'\in T_{\Sigma,s}$ and $s\in S$,
\item~$\pi(t_1,\ldots,t_n)$ is a relational atom with $\pi\in P_{s_1\ldots s_n}$, $t_i\in T_{\Sigma,s_i}$ and $s_i\in S$, 
\item~$\Gamma$ is a finite set of $\Sigma$-sentences,
\item~$X$ is a finite set of variables for $\Sigma$,
\item~$\gamma'$ is a $\Sigma[X]$-sentence, where $\Sigma[X]=(S,F[X],P)$, and $F[X]$ is the set of function symbols obtained by adding the variables in $X$ as constants to $F$.
\end{enumerate*}
\paragraph{Sentence translations} 
Quantification comes with some subtle issues related to the translation of quantified sentences along signature morphisms that require a closer look.
The translation of a variable $\pos{v_i,s,\Sigma}$ along a signature morphism $\chi\colon\Sigma\to\Sigma'$ is $\pos{v_i,\chi(s),\Sigma'}$.
Therefore, any signature morphism $\chi\colon\Sigma\to\Sigma'$ can be extended canonically to a function $\chi\colon\Sen^\FOL(\Sigma)\to\Sen^\FOL(\Sigma')$ that translates sentences symbolwise. 
$$\xymatrix{ 
 \Sigma[X] \ar[r]^{\chi'} & \Sigma'[X']\\
 \Sigma \ar@{^{(}->}[u] \ar[r]_{\chi} & \Sigma' \ar@{^{(}->}[u]
}$$
Notice that $\chi(\Exists{X}\gamma)=\Exists{X'}\chi'(\gamma)$, where 
$X'=\{\pos{v_i,\chi(s),\Sigma'} \mid \pos{v_i,s,\Sigma}\in X\}$ and
$\chi'\colon\Sigma[X]\to \Sigma'[X']$ is the extension of $\chi$ that maps each
variable $\pos{v_i,s,\Sigma}\in X$ to $\pos{v_i,\chi(s),\Sigma'}\in X'$ and
such that the diagram of signature morphisms above is commutative.
\begin{fact}
 $\Sen^\FOL$ is a functor $\Sig^\FOL \to \Set$.
\end{fact}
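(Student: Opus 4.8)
The plan is to verify the three things that make $\Sen^\FOL$ a functor: that each $\Sen^\FOL(\Sigma)$ is a set, that every signature morphism $\chi\colon\Sigma\to\Sigma'$ gives rise to a well-defined translation map $\Sen^\FOL(\chi)\colon\Sen^\FOL(\Sigma)\to\Sen^\FOL(\Sigma')$, and that $\Sen^\FOL$ preserves identities and composition. The first point is immediate: since there are only countably many variable names and $\Sigma$ is a set, the $\Sigma$-terms form a set and hence so do the $\Sigma$-sentences generated by the grammar. The substance of the proof is structural induction, carried out twice --- once to establish that symbolwise translation is well-defined, and once for the composition law.

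First I would record the term-level fact that $\chi$ induces an $S$-sorted map on terms sending each $t\in T_{\Sigma,s}$ to a term $\chi(t)\in T_{\Sigma',\chi^{st}(s)}$, obtained by relabelling each operation symbol $\sigma$ as $\chi^{op}(\sigma)$; this is a routine induction on term structure and, crucially, preserves sorts. With it, the base cases of the sentence induction are direct: $\chi$ maps an equation $t=t'$ with $t,t'\in T_{\Sigma,s}$ to $\chi(t)=\chi(t')$, a well-formed $\Sigma'$-equation because both sides now lie in $T_{\Sigma',\chi^{st}(s)}$; and it maps a relational atom $\pi(t_1,\dots,t_n)$ with $\pi\in P_{s_1\dots s_n}$ to $\chi^{rl}(\pi)(\chi(t_1),\dots,\chi(t_n))$, which is well formed since $\chi^{rl}(\pi)\in P'_{\chi^{st}(s_1)\dots\chi^{st}(s_n)}$ and $\chi(t_i)\in T_{\Sigma',\chi^{st}(s_i)}$.

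For the inductive step, $\neg\gamma$ and $\vee\Gamma$ are handled by applying the induction hypothesis to $\gamma$ and to the finitely many members of $\Gamma$. The only case requiring care is $\Exists{X}\gamma'$. Given the finite variable set $X$ over $\Sigma$, put $X'=\{\pos{v_i,\chi(s),\Sigma'}\mid\pos{v_i,s,\Sigma}\in X\}$ and let $\chi'\colon\Sigma[X]\to\Sigma'[X']$ be the unique morphism extending $\chi$ and sending each $\pos{v_i,s,\Sigma}$ to $\pos{v_i,\chi(s),\Sigma'}$; it is well defined and unique because $\Sigma[X]$ has the same sorts and relations as $\Sigma$ and only $F[X]=F\uplus X$ in place of $F$, so $\chi'$ is forced on $F$ by $\chi$ and on $X$ by the prescribed bijection onto the new constants of $\Sigma'[X']$, and one checks that the square displayed before the statement commutes with the two inclusions. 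Then the induction hypothesis applied to $\chi'$ and the $\Sigma[X]$-sentence $\gamma'$ yields that $\chi'(\gamma')$ is a well-formed $\Sigma'[X']$-sentence, so setting $\chi(\Exists{X}\gamma')\coloneqq\Exists{X'}\chi'(\gamma')$ produces a well-formed $\Sigma'$-sentence, completing the induction.

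It then remains to check functoriality. For the identity, $\mathrm{id}_\Sigma$ acts as the identity on variables, hence on terms, hence symbolwise on sentences, and in the quantifier case $(\mathrm{id}_\Sigma)'=\mathrm{id}_{\Sigma[X]}$, so $\Sen^\FOL(\mathrm{id}_\Sigma)=\mathrm{id}_{\Sen^\FOL(\Sigma)}$. For composition $\Sigma\xrightarrow{\chi}\Sigma'\xrightarrow{\theta}\Sigma''$ a further structural induction gives $\theta(\chi(\gamma))=(\theta\circ\chi)(\gamma)$ for every $\Sigma$-sentence $\gamma$; the only nontrivial point is once more the quantifier clause, where one needs $(\theta\circ\chi)'=\theta'\circ\chi'$ as morphisms $\Sigma[X]\to\Sigma''[X'']$, and this follows from the uniqueness of the variable-extension noted above, since both composites restrict to $\theta\circ\chi$ on $\Sigma$ and carry each $\pos{v_i,s,\Sigma}$ to $\pos{v_i,\theta(\chi(s)),\Sigma''}$. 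I expect this nested bookkeeping with the variable-extension squares --- well-definedness of $\chi'$, commutativity of the inclusion square, and compatibility of $(\cdot)'$ with composition --- to be the only real (though entirely routine) obstacle; every other clause is a direct unfolding of the definitions.
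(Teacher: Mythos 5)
Your proof is correct and follows exactly the route the paper intends: the paper states this as an unproved Fact, relying on the symbolwise translation of sentences and the displayed commutative square $\chi'\colon\Sigma[X]\to\Sigma'[X']$ for the quantifier case, which is precisely the machinery you set up and verify. Your identification of the variable-extension bookkeeping (well-definedness and uniqueness of $\chi'$, and $(\theta\circ\chi)'=\theta'\circ\chi'$) as the only nontrivial point is accurate, and the rest is the routine structural induction the paper leaves to the reader.
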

For the sake of simplicity, we will identify a variable only by its name and sort provided that there is no danger of confusion.
Using this convention, each inclusion $\iota\colon\Sigma\hookrightarrow\Sigma'$
is canonically extended to an inclusion of sentences
$\iota\colon\Sen^\FOL(\Sigma)\hookrightarrow\Sen^\FOL(\Sigma')$, which
corresponds to the approach of classical model theory.
\paragraph{Satisfaction relation} Satisfaction is the usual first-order
satisfaction and it is defined using the natural interpretations of ground terms
$t$ as elements $\A_t$ in models $\A$. 
For example, $\A\models t_1= t_2$ iff $\A_{t_1}=\A_{t_2}$.
\paragraph{Non-void signatures}
 A first-order signature $\Sigma$ is called \emph{non-void} if all sorts in $\Sigma$ are inhabited by terms, that is $T_{\Sigma,s}\neq\emptyset$ for all sorts $s$ in $\Sigma$.  
If $\Sigma$ is a \emph{non-void} signature then the set of $\Sigma$-terms $T_\Sigma$ can be regarded as a first-order model which interprets 
\begin{enumerate*}[label=(\alph*)]
 \item any function symbol $(\sigma\colon\ari\to s)\in F$ as a function $T_{\Sigma,\sigma}\colon T_{\Sigma,\ari}\to T_{\Sigma,s}$ defined by $T_{\Sigma,\sigma}(t)=\sigma(t)$ for all $t\in T_{\Sigma,\ari}$, and
 \item any relation symbol as the empty set.
\end{enumerate*}
\paragraph{Notations}
For each first-order signature $\Sigma$, we denote by $\bot$ the $\Sigma$-sentence $\vee\emptyset$.
Obviously, $\bot$ is not satisfiable and $\chi(\bot)=\bot$ for all signature morphisms $\chi\colon\Sigma\to \Sigma'$.
Let $T$ and $\Gamma$ be two theories over $\Sigma$.
\begin{itemize}
\item  $\A\models T$ if $\A\models \varphi$ for all $\varphi\in T$, where $\A$ is any first-order $\Sigma$-structure.
\item $T\models \Gamma$ if for all first-order structures $\A$ over $\Sigma$, we have $\A\models T$ implies $\A\models\Gamma$.
\item $T\modelsm\Gamma$ if $T\models \Gamma$ and $\Gamma\models T$.
In this case, we say that $T$ and $\Gamma$ are semantically equivalent.
\end{itemize}

\section{Hybrid-dynamic first-order logic with rigid symbols (\textbf{HDFOLR})}\label{2}
 In this section, we present hybrid-dynamic first-order logic with rigid symbols, which is an extension of hybrid first-order logic with rigid symbols~\cite{gai-godel} with features of dynamic logics.
 Some preliminary attempts to the presentation of this logic framework can be found in \cite{gai-dbir}.
\paragraph{Signatures}
The signatures are of the form $\Delta=(\Sigma^\nom,\Sigma^\rigid\subseteq\Sigma)$, where
\begin{enumerate}
\item $\Sigma^\nom=(S^\nom,F^\nom,P^\nom)$ is a single-sorted first-order signature such that
$S^\nom=\{\any\}$ is a singleton,  
$F^\nom$ is a set of constants called \emph{nominals}, and 
$P^\nom$ is a set of binary relation symbols called \emph{modalities},
\item $\Sigma=(S,F,P)$ is a many-sorted first-order signature such that
$S$ is a set of sorts, 
$F$ is a $(S^*\times S)$-indexed set of function symbols, and 
$P$ is a $S^*$-indexed set of relation symbols, and
\item $\Sigma^\rigid=(S^\rigid,F^\rigid,P^\rigid)$ is a many-sorted first-order
subsignature of \emph{rigid} symbols.
\end{enumerate}
Throughout this paper, we let $\Delta$ and $\Delta_i$ 
range over $\HDFOLR$ signatures of the form $(\Sigma^\nom,\Sigma^\rigid\subseteq\Sigma)$ and $(\Sigma_i^\nom,\Sigma_i^\rigid\subseteq\Sigma_i)$, respectively.
 \paragraph{Signature morphisms}
A \emph{signature morphism} $\chi \colon \Delta \to \Delta_1$ consists of a pair of first-order signature morphisms
$\chi^{\nom} \colon \Sigma^{\nom} \to \Sigma_1^{\nom}$ and $\chi \colon \Sigma \to \Sigma_1$ such that $\chi(\Sigma^{\rigid}) \subseteq \Sigma_1^{\rigid}$.
\begin{fact}
$\HDFOLR$ signature morphisms form a category $\Sig^\HDFOLR$ under the component-wise composition as first-order signature morphisms.
\end{fact}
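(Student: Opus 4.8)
The plan is to reduce everything to the earlier Fact that first-order signature morphisms form the category $\Sig^\FOL$, and to treat the rigidity constraint ``$\chi(\Sigma^\rigid)\subseteq\Sigma_1^\rigid$'' merely as a side condition that one checks is stable under composition and satisfied by identities. Since an $\HDFOLR$ signature morphism $\chi\colon\Delta\to\Delta_1$ is by definition a pair $(\chi^\nom,\chi)$ of $\FOL$ signature morphisms subject to that single condition, the candidate categorical structure is essentially forced: the composite must be computed componentwise, and the identity on $\Delta$ must be the pair $(1_{\Sigma^\nom},1_\Sigma)$ of $\FOL$-identities.

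First I would check that composition is well defined. Given $\chi\colon\Delta\to\Delta_1$ and $\chi_1\colon\Delta_1\to\Delta_2$, put $\chi_1\circ\chi=(\chi_1^\nom\circ\chi^\nom,\chi_1\circ\chi)$; the two components are $\FOL$ signature morphisms $\Sigma^\nom\to\Sigma_2^\nom$ and $\Sigma\to\Sigma_2$ because $\Sig^\FOL$ is closed under componentwise composition. For the rigidity condition, monotonicity of images together with $\chi(\Sigma^\rigid)\subseteq\Sigma_1^\rigid$ and $\chi_1(\Sigma_1^\rigid)\subseteq\Sigma_2^\rigid$ yields $(\chi_1\circ\chi)(\Sigma^\rigid)=\chi_1(\chi(\Sigma^\rigid))\subseteq\chi_1(\Sigma_1^\rigid)\subseteq\Sigma_2^\rigid$, so $\chi_1\circ\chi$ is again an $\HDFOLR$ signature morphism. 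Next I would observe that $1_\Delta=(1_{\Sigma^\nom},1_\Sigma)$ is an $\HDFOLR$ signature morphism, the rigidity condition holding trivially since $1_\Sigma(\Sigma^\rigid)=\Sigma^\rigid$. Finally, associativity and the unit laws $1_{\Delta_1}\circ\chi=\chi=\chi\circ 1_\Delta$ are inherited at once from $\Sig^\FOL$, because both composition and identities in $\Sig^\HDFOLR$ are defined componentwise.

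I do not expect any genuine obstacle: the only ingredient beyond the category axioms for $\Sig^\FOL$ is the closure of the inclusion $\chi(\Sigma^\rigid)\subseteq\Sigma_1^\rigid$ under composites, which is just monotonicity of taking images. The one point worth making explicit is that, since $\Sigma^\rigid$ is a subsignature of $\Sigma$ on which the many-sorted component $\chi$ restricts, the expression $\chi(\Sigma^\rigid)$ is unambiguous, so the side condition transports cleanly along composites.
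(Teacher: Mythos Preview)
Your proposal is correct. The paper states this as a Fact without proof, treating it as self-evident; your verification---reducing to $\Sig^\FOL$ and checking that the rigidity constraint $\chi(\Sigma^\rigid)\subseteq\Sigma_1^\rigid$ is preserved under composition and holds for identities---is exactly the routine check one would supply if pressed.
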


\paragraph{Kripke structures}
  For every signature $\Delta$, the class of Kripke structures over $\Delta$
  consists of pairs $(W,M)$, where
  
\begin{enumerate}
\item $W$ is a first-order structure over $\Sigma^\nom$, called a frame,
with the universe $|W|$ consisting of a non-empty set of possible worlds, and
  
\item $M\colon|W|\to |\Mod^\FOL(\Sigma)|$ is a mapping from the universe of $W$ to the class of first-order $\Sigma$-structures such that the following sharing condition holds:  ${M_{w_1}\red_{\Sigma^\rigid}}={M_{w_2}\red_{\Sigma^\rigid}}$ for all possible worlds $w_1,w_2\in |W|$.
\end{enumerate}
\paragraph{Kripke homomorphisms}
A \emph{morphism} $h \colon (W, M) \to (W', M')$ is also a pair $(W\stackrel{h}\to W', \{M_{w}\stackrel{h_{w}}\to M'_{h(w)}\}_{w \in |W|})$
consisting of first-order homomorphisms such that $h_{w_{1}, s} = h_{w_{2}, s}$ for all possible worlds $w_{1}, w_{2} \in |W|$ and all rigid sorts $s \in S^{\rigid}$.
\begin{fact}
For any signature $\Delta$, the $\Delta$-homomorphisms form a category $\Mod^\HDFOLR(\Delta)$ under the component-wise composition as first-order homomorphisms.
\end{fact}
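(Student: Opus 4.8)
The plan is to verify the three category axioms --- that the composite of two Kripke $\Delta$-homomorphisms is again a Kripke $\Delta$-homomorphism, that composition is associative, and that every Kripke structure carries an identity morphism --- reducing each one to the corresponding property of $\Mod^\FOL$, which is already known to be a category (indeed, a functor $\Sig^\FOL\to\Cat^{op}$).

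First, given $h\colon(W,M)\to(W',M')$ and $h'\colon(W',M')\to(W'',M'')$, I would define the composite component-wise: its frame part is the first-order composite $W\xrightarrow{h}W'\xrightarrow{h'}W''$ in $\Mod^\FOL(\Sigma^\nom)$, and for each world $w\in|W|$ its local part is $M_w\xrightarrow{h_w}M'_{h(w)}\xrightarrow{h'_{h(w)}}M''_{h'(h(w))}$ in $\Mod^\FOL(\Sigma)$. Each of these is a first-order homomorphism because $\Mod^\FOL(\Sigma^\nom)$ and $\Mod^\FOL(\Sigma)$ are categories, so the only thing requiring a genuine (if short) argument is the condition on morphisms for rigid sorts. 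For a rigid sort $s\in S^\rigid$ and worlds $w_1,w_2\in|W|$, the $s$-component of the composite at $w_1$ is $h'_{h(w_1),s}\circ h_{w_1,s}$; since $h$ is a Kripke homomorphism we have $h_{w_1,s}=h_{w_2,s}$, and since $h'$ is a Kripke homomorphism its sharing condition applies to the pair of worlds $h(w_1),h(w_2)\in|W'|$, giving $h'_{h(w_1),s}=h'_{h(w_2),s}$; composing the two equalities shows that the $s$-components of the composite at $w_1$ and at $w_2$ agree.

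Associativity is then immediate: both bracketings of a triple composite have frame part equal to the triple first-order composite and local part at each $w$ equal to the triple composite of the relevant first-order homomorphisms, so the equality follows componentwise from associativity in $\Mod^\FOL$. For identities I would take $\mathrm{id}_{(W,M)}=(\mathrm{id}_W,\{\mathrm{id}_{M_w}\}_{w\in|W|})$; this is a Kripke homomorphism because its frame and local parts are first-order identities, and for rigid $s$ the required equation $\mathrm{id}_{(M_{w_1})_s}=\mathrm{id}_{(M_{w_2})_s}$ holds since the sharing condition already imposed on $(W,M)$ gives ${M_{w_1}\red_{\Sigma^\rigid}}={M_{w_2}\red_{\Sigma^\rigid}}$, hence $(M_{w_1})_s=(M_{w_2})_s$. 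The left and right unit laws hold componentwise by the unit laws in $\Mod^\FOL$.

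No step here is really hard; the construction is forced and every check is componentwise. The only place that calls for attention beyond routine bookkeeping is the rigidity constraint --- both verifying that it is preserved by composition (done above, using that the sharing condition of $h'$ is available at the images $h(w_1),h(w_2)$) and observing that it is precisely what makes the identity morphisms well defined, which is where one invokes the sharing condition built into the definition of Kripke structures.
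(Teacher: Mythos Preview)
Your verification is correct and complete. The paper itself does not prove this statement --- it is labelled a \emph{Fact} and left without proof, treated as routine bookkeeping inherited from the component first-order categories --- so there is no paper proof to compare against. Your careful check that the rigid-sort sharing condition survives composition (via the images $h(w_1),h(w_2)$) and is exactly what makes identities well defined is precisely the content that would need spelling out if the paper had chosen to elaborate.
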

Every signature morphism $\chi \colon \Delta \to \Delta'$ induces appropriate \emph{reductions of models}, as follows:
every $\Delta'$-model $(W', M')$ is reduced to a $\Delta$-model $(W', M') \red_{\chi}$ that interprets every symbol $x$ in $\Delta$ as $(W', M')_{\chi(x)}$.
When $\chi$ is an inclusion, we usually denote $(W', M') \red_\chi$ by $(W', M') \red_\Delta$ -- in this case, the model reduct simply forgets the interpretation of those symbols in $\Delta'$ that do not belong to $\Delta$.
\begin{fact}
$\Mod^\HDFOLR$ becomes a functor $\Sig^\HDFOLR\to \Cat^{op}$, with $\Mod^\HDFOLR(\chi)(W,M) = (W,M)\red_\chi$ for each signature morphism $\chi\colon\Delta\to\Delta'$ and each Kripke structure $(W,M)$ over $\Delta'$.
\end{fact}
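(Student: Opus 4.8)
The plan is to verify the four things packed into the statement, reducing each to a property already recorded for $\Mod^\FOL$ and keeping in mind that a Kripke structure over $\Delta$ has two components -- a frame over $\Sigma^\nom$ and a family of local models over $\Sigma$ -- which are reduced along $\chi^\nom$ and along the first-order part of $\chi$, respectively. Explicitly: (i) for each $\chi\colon\Delta\to\Delta'$ and each $\Delta'$-Kripke structure $(W,M)$, the pair $(W,M)\red_\chi$ is again a Kripke structure over $\Delta$; (ii) for each $\Delta'$-homomorphism $h$, the pair $h\red_\chi$ is a $\Delta$-homomorphism; (iii) $\Mod^\HDFOLR(\chi)$, so defined, preserves identities and composition of Kripke homomorphisms, hence is a functor $\Mod^\HDFOLR(\Delta')\to\Mod^\HDFOLR(\Delta)$; and (iv) $\Mod^\HDFOLR$ preserves identity signature morphisms and reverses composition, hence is a functor $\Sig^\HDFOLR\to\Cat^{op}$.

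For (i), $\chi^\nom$ necessarily fixes the unique nominal sort $\any$, so $W\red_{\chi^\nom}$ has universe $|W|$ and the set of worlds is unchanged, while the local-model map of $(W,M)\red_\chi$ is $w\mapsto M_w\red_\chi$, which lands in $|\Mod^\FOL(\Sigma)|$ by the $\FOL$ reduct construction. The only point with content is the sharing condition. If $x$ is a rigid operation or relation symbol of $\Delta$, then $\chi(x)\in{\Sigma'}^\rigid$ by the definition of an $\HDFOLR$ signature morphism, so $(M_w)_{\chi(x)}$ is independent of $w$ because $(W,M)$ satisfies the sharing condition; likewise, if $s$ is a rigid sort of $\Delta$ then $\chi(s)$ is a rigid sort of $\Delta'$, so the carrier $(M_w)_{\chi(s)}$ is independent of $w$. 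Since $M_w\red_\chi$ restricted to $\Sigma^\rigid$ involves only these carriers and interpretations, we get $(M_{w_1}\red_\chi)\red_{\Sigma^\rigid}=(M_{w_2}\red_\chi)\red_{\Sigma^\rigid}$ for all $w_1,w_2\in|W|$, which is exactly the sharing condition for $(W,M)\red_\chi$.

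For (ii), writing the frame component of $h$ again as $h$ and its world components as $h_w$, the reduct $h\red_\chi$ has frame component $h\red_{\chi^\nom}$, a $\Sigma^\nom$-homomorphism by functoriality of $\Mod^\FOL$, and world components $h_w\red_\chi$, each a $\Sigma$-homomorphism for the same reason; and it satisfies the rigidity requirement on homomorphisms because $(h_w\red_\chi)_s=h_{w,\chi(s)}$, which for a rigid sort $s$ of $\Delta$ does not depend on $w$ since $\chi(s)$ is rigid in $\Delta'$. For (iii), identities and composites of Kripke homomorphisms are formed componentwise, and $\red_\chi$ acts componentwise and already preserves $\FOL$-identities and $\FOL$-composites, so $\Mod^\HDFOLR(\chi)$ preserves identities and composites. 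For (iv), $\Mod^\HDFOLR(1_\Delta)$ is the identity since reducing along the identity first-order morphisms does nothing; and for composable $\chi\colon\Delta\to\Delta'$ and $\chi'\colon\Delta'\to\Delta''$, reducing a $\Delta''$-structure $(W,M)$ first along $\chi'$ and then along $\chi$ interprets each symbol $x$ of $\Delta$, at each world, as $(W,M)_{\chi'(\chi(x))}$, which is precisely the interpretation furnished by the reduct along the composite $\Delta\to\Delta''$ -- this being the contravariant functoriality already available for $\Mod^\FOL$.

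I do not foresee a genuine obstacle: essentially the whole argument is bookkeeping that defers to the stated facts about $\Mod^\FOL$ together with the componentwise nature of Kripke homomorphisms. The one step worth stating carefully is the stability of the sharing condition -- and of the rigidity condition on homomorphisms -- under reduct, which holds precisely because a signature morphism is required to satisfy $\chi(\Sigma^\rigid)\subseteq{\Sigma'}^\rigid$; this is the single place where the defining clauses of $\HDFOLR$ signatures and signature morphisms genuinely interact.
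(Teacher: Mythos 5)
Your proof is correct and is exactly the routine componentwise verification that the paper leaves implicit by stating this as a Fact without proof; in particular you correctly isolate the one non-trivial point, namely that the sharing condition (and the rigidity condition on homomorphisms) survives the reduct because signature morphisms are required to satisfy $\chi(\Sigma^{\rigid})\subseteq\Sigma_1^{\rigid}$. Nothing further is needed.
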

\paragraph{Actions}
As in dynamic logic, $\HDFOLR$ supports structured actions obtained from atoms using sequential composition, union, and iteration.
The set $A^{\nom}$ of \emph{actions} over $\Sigma^{\nom}$ is defined in an inductive fashion, according to the grammar:
$$\act \Coloneqq \lambda \mid \act \comp \act \mid \act \cup \act \mid \act^{*}$$
where $\lambda \in  P^\nom$ is a binary relation on nominals.
Given a natural number $m > 0$, we denote by $\act^{m}$ the composition $\act \comp \dotsb \comp \act$ (where the action $\act$ occurs $m$ times).
Actions are interpreted in Kripke structures as \emph{accessibility relations} between possible worlds.
This is done by extending the interpretation of binary modalities (from $ P^\nom$):
$W_{\act_{1} \comp \act_{2}} = W_{\act_{1}} \comp W_{\act_{2}}$ (diagrammatic composition of relations), 
$W_{\act_{1} \cup \act_{2}} = W_{\act_{1}}\cup W_{\act_{2}}$ (union), and
$W_{\act^{*}} = (W_{\act})^{*}$ (reflexive \textit{\&} transitive closure).
  
  \paragraph{Hybrid terms} 
For any signature $\Delta$, we make the following notational conventions:
\begin{enumerate}  
\item $S^\ext\coloneqq S^\rigid\cup\{\any\}$ the extended set of rigid sorts, where $\any$ is the sort of nominals,
\item $S^\flexible \coloneqq S \setminus S^{\rigid}$ the subset of flexible sorts,  
\item $F^\flexible\coloneqq F\setminus F^\rigid$ the subset of flexible function symbols, 
where $F\setminus F^\rigid=\{F_{\ari\to s}\setminus F^\rigid_{\ari\to s}\}_{(\ari,s)\in S^*\times S}$,
\item $P^\flexible\coloneqq P\setminus  P^\rigid$ the subset of flexible relation symbols, where $P\setminus P^\rigid=\{P_\ari\setminus P^\rigid_\ari\}_{\ari\in S^*}$. 
\end{enumerate}
The \emph{rigidification} of $\Sigma$ with respect to $ F^\nom$ is the signature $@\Sigma=(@S,@F,@P)$, where 
\begin{enumerate}
\item $@S\coloneqq \{\at{k} s \mid k\in F^\nom \mbox{ and } s\in S\}$,
\item $@F\coloneqq\{\at{k}\sigma\colon \at{k}\ari \to \at{k} s \mid k\in F^\nom \mbox{ and } (\sigma\colon \ari\to s) \in F \}$,
\footnote{$\at{k} (s_1\ldots s_n) \coloneqq \at{k} s_1\ldots\at{k} s_n$ for all arities $s_1\ldots s_n$.} and
\item $@P\coloneqq \{\at{k} \pi\colon \at{k} \ari \mid k\in F^\nom \mbox{ and }(\pi\colon\ari)\in P\}$.
\end{enumerate}
It should be noted that $\at{k}$ is used polymorphically. Here it is a device
from metalanguage that creates new symbols out of existing ones. Later on $\at{k}$ 
will also be used as a sentence-building operator. The context always decides 
which of these uses are intended. 
Since the rigid symbols have the same interpretation across the worlds, we define $\at{k} x\coloneqq x$ for all nominals $k\in F^\nom$ and all symbols $x$ in $\Sigma^\rigid$.
 The set of \emph{rigid $\Delta$-terms} is $T_{@\Sigma}$, while the set of \emph{open $\Delta$-terms} is $T_\Sigma$. 
 The set of \emph{hybrid $\Delta$-terms} is $T_{\overline\Sigma}$, where 
 $\overline\Sigma=(\overline{S},\overline{F},\overline{P})$, 
 $\overline{S}=S\cup @S^\flexible$,
 $\overline{F}=F\cup @F^\flexible$, and
 $\overline{P}=P\cup @P^\flexible$.
\begin{remark}
The set of hybrid terms include both open and rigid terms, that is, $T_\Sigma\subseteq T_{\overline\Sigma}$ and $T_{@\Sigma}\subseteq T_{\overline\Sigma}$.
\end{remark}
The interpretation of the hybrid terms into Kripke structures is defined as follows:
for any $\Delta$-model $(W,M)$, and any possible world $w\in|W|$, 
\begin{enumerate}
\item $M_{w,\sigma(t)} = (M_{w,\sigma})(M_{w,t})$, where $(\sigma\colon\ari\to s)\in F$, and $t\in T_{\overline\Sigma,\ari}$,
\footnote{$M_{w,(t_1,\ldots,t_2)}\coloneqq M_{w,t_1},\ldots,M_{w,{t_n}}$ for all tuples of hybrid terms $(t_1,\ldots,t_n)$.}
\item $M_{w,(\at{k} \sigma)(t)} = (M_{w',\sigma}) (M_{w,t})$, where $(\at{k} \sigma\colon\at{k} \ari\to\at{k} s)\in @F^\flexible$, $t\in T_{\overline\Sigma,\at{k}\ari}$ and $w'=W_k$.
\end{enumerate}
%
%
\paragraph{Sentences}
The simplest sentences defined over a signature $\Delta$,
usually referred to as atomic, are given by:
$$\rho \Coloneqq k \alt t_{1} = t_{2} \alt \pi(t)$$
where 
\begin{enumerate*}[label=(\alph*)]
\item $k,k' \in  F^\nom$ are nominals,
\item $t_i \in T_{\overline\Sigma,s}$ are hybrid terms, $s\in \overline{S}$ is a hybrid sort,
\item $\pi\in\overline{P}_\ari$, $\ari\in (\overline{S})^*$ and $t\in T_{\overline\Sigma,\ari}$.
\end{enumerate*}
We refer to these sentences, in order, as \emph{nominal sentences}, \emph{hybrid equations} and \emph{hybrid relations}, respectively.
The set $\Sen^\HDFOLR(\Delta)$ of \emph{full sentences} over $\Delta$ are given by the following grammar:
\[
  \gamma \Coloneqq
  \rho \alt
  \at{k} \gamma \alt
  \lnot \gamma \alt
  \textstyle\vee \Gamma \alt
  \store{z} \gamma' \alt
  \Exists{X} \gamma'' \alt
  \pos{\act} \gamma 
\]
where 
\begin{enumerate*}[label=(\alph*)]
\item $\rho$ is a nominal sentence or a hybrid equation or a hybrid relation,
\item $k \in  F^\nom $ is a nominal,
\item $\act \in A^{\nom}$ is an action,
\item $\Gamma$ is a finite set of sentences over $\Delta$,
\item $z$ is a nominal variable for $\Delta$,
\item $\gamma'$ is a sentence over the signature $\Delta[z]$ obtained by adding $z$ as a new constant to $ F^\nom$,
\item $X$ is a set of variables for $\Delta$ of sorts from the extended set $S^\ext$ of rigid sorts, and
\item$\gamma''$ is a a sentence over the signature $\Delta[X]$ obtained by adding the variables in $X$ as new constants to $ F^\nom$ and $F^{\rigid}$.
\end{enumerate*}
Other than the first kind of sentences (\emph{atoms}), we refer to the sentence-building operators, in order, as 
\emph{retrieve}, 
\emph{negation}, 
\emph{disjunction}, 
\emph{store}, 
\emph{existential quantification} and 
\emph{possibility}, 
respectively. Notice that \emph{possibility} is parameterized by actions. 
 
 \paragraph{Sentence translations}
Every signature morphism $\chi\colon\Delta\to\Delta'$ induces \emph{translations of sentences}, as follows:
each $\Delta$-sentence $\gamma$ is translated to a $\Delta'$-sentence $\chi(\gamma)$ by replacing (in an inductive manner) the symbols in $\Delta$ with symbols from $\Delta'$ according to $\chi$.
\begin{fact}
$\Sen^\HDFOLR$ is a functor $\Sig^\HDFOLR \to \Set$.
\end{fact}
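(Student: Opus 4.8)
The plan is to establish the two functoriality conditions — preservation of identities and of composition — by structural induction on $\HDFOLR$-sentences, after first checking that for a signature morphism $\chi\colon\Delta\to\Delta_1$ the translation map $\Sen^\HDFOLR(\chi)\colon\gamma\mapsto\chi(\gamma)$ is well-defined, i.e.\ that it actually lands in $\Sen^\HDFOLR(\Delta_1)$. That well-definedness is itself an induction on the grammar for full sentences, so the two inductions can be run in tandem.

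First I would dispatch the auxiliary syntactic constructions on which sentences are built. The component $\chi^\nom\colon\Sigma^\nom\to\Sigma_1^\nom$ sends modalities to modalities, hence extends by recursion on the action grammar to a map $A^\nom\to A_1^\nom$ (namely $\lambda\mapsto\chi^\nom(\lambda)$, compatibly with $\comp$, $\cup$ and $(\_)^*$); this assignment is trivially functorial, being defined purely from $\chi^\nom$. Next, since $\chi$ respects the rigid/flexible split (by the constraint $\chi(\Sigma^\rigid)\subseteq\Sigma_1^\rigid$), it sends $@F^\flexible$- and $@P^\flexible$-symbols to symbols of the same kind and sorts of $@S^\flexible$ to sorts of $@S_1^\flexible$, so it induces a first-order signature morphism $\overline\chi\colon\overline\Sigma\to\overline{\Sigma_1}$ (with $\overline\chi(\at{k}\sigma)=\at{\chi^\nom(k)}\chi(\sigma)$, and similarly on $@P^\flexible$), and likewise $@\chi\colon@\Sigma\to@\Sigma_1$. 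Composing with the term-algebra construction $T_{(\_)}$ yields the translation of hybrid terms $T_{\overline\Sigma}\to T_{\overline{\Sigma_1}}$, and functoriality here reduces to functoriality of $T_{(\_)}$ on first-order signatures together with the routine fact that $\Sigma\mapsto\overline\Sigma$ and $\Sigma\mapsto@\Sigma$ are functorial with respect to the relevant morphisms.

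Then I would run the induction on sentences. Atoms (nominal sentences, hybrid equations, hybrid relations) translate to atoms of the same kind using $\chi^\nom$ on nominals and $\overline\chi$ on hybrid terms. The cases $\at{k}\gamma$, $\lnot\gamma$, $\vee\Gamma$ and $\pos{\act}\gamma$ are compositional: $\chi$ commutes with these operators by definition, and the inductive hypothesis applies to the immediate subsentences (and, for $\pos{\act}\gamma$, also to the action $\act$ via the previous paragraph). The only delicate cases are $\store{z}\gamma'$ and $\Exists{X}\gamma''$, where the subsentence lives over the extended signature $\Delta[z]$ (resp.\ $\Delta[X]$). Here I would use, exactly as in the first-order quantifier case depicted by the commuting square of \S\ref{1}, the canonical lift $\chi[z]\colon\Delta[z]\to\Delta_1[z]$ (resp.\ $\chi[X]\colon\Delta[X]\to\Delta_1[X']$, with $X'$ the image of $X$ under $\chi$) which is the identity on the freshly added constants, and the definitional identities $\chi(\store{z}\gamma')=\store{z}\,\chi[z](\gamma')$ and $\chi(\Exists{X}\gamma'')=\Exists{X'}\chi[X](\gamma'')$; applying the inductive hypothesis to $\gamma'$ over $\Delta[z]$ (resp.\ $\gamma''$ over $\Delta[X]$) then yields both well-formedness of the translation over $\Delta_1$ and the functoriality equations, once we observe that $(\_)[z]$ and $(\_)[X]$ send identities to identities and composites to composites — immediate, since the added constants keep the same variable names on both sides and their sorts are translated functorially.

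I expect the main (and essentially the only) obstacle to be exactly this bookkeeping around the binding operators: one must be careful that the lifts $\Delta\mapsto\Delta[z]$, $\Delta\mapsto\Delta[X]$ and the auxiliary constructions $\Sigma\mapsto@\Sigma$, $\Sigma\mapsto\overline\Sigma$ behave functorially with respect to the relevant morphisms, so that the inductive steps for $\Sen^\HDFOLR(\chi_2\circ\chi_1)=\Sen^\HDFOLR(\chi_2)\circ\Sen^\HDFOLR(\chi_1)$ and $\Sen^\HDFOLR(\mathrm{id}_\Delta)=\mathrm{id}_{\Sen^\HDFOLR(\Delta)}$ close without circularity. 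Everything else is a direct unwinding of the definition of sentence translation.
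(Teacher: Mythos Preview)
Your proposal is correct and thorough, but the paper itself gives no proof of this statement: it is stated as a \emph{Fact} and left without argument, exactly as the analogous fact for $\Sen^\FOL$ earlier in the paper. The authors evidently regard the functoriality of sentence translation as routine once the translation is defined symbolwise, and they rely on the reader to fill in precisely the kind of structural induction you have outlined.

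Your write-up goes well beyond what the paper provides, carefully isolating the auxiliary functorialities (of $\Sigma\mapsto\overline\Sigma$, $\Sigma\mapsto@\Sigma$, the action translation, and the signature extensions $\Delta\mapsto\Delta[z]$, $\Delta\mapsto\Delta[X]$) that make the induction on sentences close. This is exactly the right decomposition, and your identification of the binding operators $\store{z}$ and $\Exists{X}$ as the only cases requiring care is accurate; the commuting-square treatment you borrow from the first-order quantifier case in \S\ref{1} is the intended mechanism.
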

\paragraph{Local satisfaction relation}
Given a $\Delta$-model $(W, M)$ and a world $w \in |W|$, we define the \emph{satisfaction of $\Delta$-sentences at $w$} by structural induction as follows:
\begin{enumerate}

\item \emph{For atomic sentences}:
  \begin{itemize}

  \item $(W, M) \models^{w} k $ iff $W_k = w$ for all nominals $k$;

  \item $(W, M) \models^{w} t_{1} = t_{2}$ iff $M_{w, t_1} = M_{w,t_2} $ for all hybrid equations $t_{1} = t_{2}$;

  \item $(W, M) \models^{w} \pi(t)$ iff $M_{w,t} \in M_{w, \pi}$ for all hybrid relations $\pi(t)$.

  \end{itemize}

\item \emph{For full sentences}:
\begin{itemize}
  \item $(W, M) \models^{w} \at{k} \gamma$ iff $(W, M) \models^{w'} \gamma$, where $w' = W_{k}$;

  \item $(W, M) \models^{w} \neg \gamma$ iff $(W, M) \not\models^{w} \gamma$;

  \item $(W, M) \models^{w} \vee \Gamma$ iff $(W, M) \models^{w} \gamma$ for some $\gamma \in \Gamma$; 

  \item $(W, M) \models^{w} \store{z}{\gamma}$ iff $(W^{z \leftarrow w}, M) \models^{w} \gamma$,
  
where $(W^{z \leftarrow w}, M)$ is the unique $\Delta[z]$-expansion of $(W, M)$ that interprets the variable $z$ as $w$;
\footnote{An expansion of $(W, M)$ to $\Delta[X]$ is a Kripke structure $(W', M')$ over $\Delta[X]$ that interprets all symbols in $\Delta$ in the same way as $(W, M)$.}    

\item $(W, M) \models^w \Exists{X}{\gamma}$ iff $(W', M') \models^w \gamma$ for some expansion $(W', M')$ of $(W, M)$ to the signature $\Delta[X]$;
\footnotemark[\thefootnote]

\item $(W, M) \models^{w} \pos{\act} \gamma$ iff $(W, M) \models^{w'} \gamma$ for some $w' \in |W|$ such that $(w, w') \in W_{\act}$.

\end{itemize}
  
\end{enumerate}
The following \emph{satisfaction condition} can be proved by induction on the structure of $\Delta$-sentences.
The proof is essentially identical to those developed for several other variants of hybrid logic presented in the literature (see, e.g.~\cite{dia-qvh}).
\begin{proposition}[Local satisfaction condition for signature morphisms] \label{prop:sat-cond}
For every signature morphism $\chi \colon \Delta \to \Delta'$, 
$\Delta'$-model $(W', M')$, 
possible world $w' \in |W'|$, and 
$\Delta$-sentence $\gamma$, 
we have
$(W', M') \models^{w} \chi(\gamma)$ iff $(W', M') \red_{\chi} \models^{w} \gamma$.
\footnote{By the definition of reducts, $(W', M')$ and $(W', M') \red_{\chi}$ have the same possible worlds.}
\end{proposition}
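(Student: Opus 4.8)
The statement is a satisfaction condition, so the plan is to prove it by structural induction on the $\Delta$-sentence $\gamma$, once the term and action layers have been dealt with. Throughout, write $(W'', M'') \coloneqq (W', M') \red_{\chi}$; by the definition of reducts $(W'', M'')$ and $(W', M')$ have the same possible worlds, $W''_k = W'_{\chi(k)}$ for every nominal $k$ of $\Delta$, every modality $\lambda$ of $\Sigma^\nom$ is interpreted by $W''_{\lambda} = W'_{\chi(\lambda)}$, and $M''_w$ interprets every symbol $x$ of $\Sigma$ as $M'_{w, \chi(x)}$ at each world $w$. I would first establish, by a sub-induction on the structure of hybrid terms, that $M''_{w, t} = M'_{w, \chi(t)}$ for all $w \in |W'|$ and all $t \in T_{\overline\Sigma}$: the two non-trivial clauses are ordinary application $\sigma(t)$, handled by the inductive hypothesis and the definition of the reduct on operation symbols, and rigidified application $(\at{k}\sigma)(t)$, which additionally uses $W''_k = W'_{\chi(k)}$ together with the sharing condition on $(W', M')$, which guarantees that the interpretation of a rigid operation symbol does not depend on the chosen world, so that evaluating $\sigma$ at $W'_{\chi(k)}$ instead of at $w$ is harmless. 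I would then establish, by a shorter sub-induction on the structure of actions $\act \in A^\nom$, that $W''_{\act} = W'_{\chi(\act)}$, the clauses for $\comp$, $\cup$ and $(-)^{*}$ following because diagrammatic composition, union and reflexive--transitive closure of binary relations are computed identically on the two sides.

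With these two lemmas in hand, the main induction on $\gamma$ proceeds clause by clause. The atomic cases are immediate: $(W', M') \models^{w} \chi(k)$ iff $W'_{\chi(k)} = w$ iff $W''_k = w$ iff $(W'', M'') \models^{w} k$, while hybrid equations and hybrid relations reduce to the term lemma. Negation and disjunction are purely propositional. For \emph{retrieve} $\at{k}\gamma$ one applies the inductive hypothesis at the world $W'_{\chi(k)} = W''_k$, and for \emph{possibility} $\pos{\act}\gamma$ one uses the action lemma to identify the two accessibility relations and then applies the inductive hypothesis at the accessible world.

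The cases carrying the real content are \emph{store} and \emph{existential quantification}, since they involve a change of signature. For $\store{z}\gamma'$ I would extend $\chi$ to a signature morphism $\chi[z] \colon \Delta[z] \to \Delta'[z]$ making the evident square with the two inclusions commute, observe that the unique $\Delta'[z]$-expansion of $(W', M')$ sending $z$ to $w$ reduces along $\chi[z]$ to the unique $\Delta[z]$-expansion of $(W'', M'')$ sending $z$ to $w$, and then invoke the inductive hypothesis for $\gamma'$ over $\Delta[z]$. For $\Exists{X}\gamma''$ the same idea applies, but now it rests on the observation that reduction along $\chi[X] \colon \Delta[X] \to \Delta'[X']$ puts the $\Delta'[X']$-expansions of $(W', M')$ in bijective correspondence with the $\Delta[X]$-expansions of $(W'', M'')$ (an amalgamation-type property of the signature extension adjoining the new rigid constants), so that a witnessing expansion on either side transports to a witnessing expansion on the other. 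I expect the only real obstacle to be the bookkeeping around these two quantifier cases --- choosing the extended morphisms, checking the commuting squares, and verifying that expansion and reduction are mutually inverse on the relevant subclasses of models --- but this is entirely routine and the whole argument parallels the satisfaction-condition proofs for other hybrid logics in the literature, e.g.~\cite{dia-qvh}.
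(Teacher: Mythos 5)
Your proposal is correct and is exactly the argument the paper has in mind: the paper gives no details, stating only that the proposition "can be proved by induction on the structure of $\Delta$-sentences" essentially as in the literature (e.g.\ \cite{dia-qvh}), and your structural induction --- prefaced by the auxiliary lemmas on hybrid terms and on actions, with the expansion/reduction bijection handling \emph{store} and existential quantification --- is precisely that standard proof, worked out in full. The only nitpick is your phrasing in the rigidified-term case: for $(\at{k}\sigma)(t)$ both sides are evaluated at $W'_{\chi(k)}=W''_k$ by definition, so no world change occurs there; the sharing condition is only needed in the corner case where the flexible $\sigma$ is mapped by $\chi$ to a rigid symbol (so that $\at{\chi(k)}\chi(\sigma)=\chi(\sigma)$ is evaluated at $w$), which your remark covers even if slightly misattributed.
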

\paragraph{Non-void signatures}
A signature $\Delta=(\Sigma^\nom,\Sigma^\rigid \subseteq \Sigma)$ is called \emph{non-void} if both $\Sigma^\nom$  and  $\Sigma$ are non-void first-order signatures.
Notice that for any non-void signature, 
the set of nominals is not empty, that is, $ F^\nom\neq \emptyset$, and  
the set of hybrid terms of any sort is not empty, that is, $T_{\overline\Sigma,s}\neq \emptyset$ for all sorts $s\in S$. 
 
 \begin{lemma} \label{HFOLR-init}
  If $\Delta=(\Sigma^\nom,\Sigma^\rigid\subseteq\Sigma)$ is non-void then there exists an initial model of terms $(W^\Delta,M^\Delta)$ defined as follows:
\begin{enumerate*}[label=(\arabic*)]
\item $W^\Delta= F^\nom$, and 
   
\item $M^\Delta\colon  F^\nom \to |\Mod^\FOL(\Sigma)|$, where for all $k\in F^\nom$, $M^\Delta_k$ is a first-order structure such that
\end{enumerate*}
\begin{enumerate}[label=(\alph*)]
\item $M^\Delta_{k,s}= T_{@\Sigma,@_k s}$ for all sorts $s\in S$,
\item $M^\Delta_{k,\sigma}\colon T_{@\Sigma,\at{k} \ari} \to T_{@\Sigma,@_k s}$ is defined by $M^\Delta_{k,\sigma}(t)=(\at{k} \sigma)(t)$ for all function symbols $(\sigma\colon\ari\to s)\in F$ and all tuples of hybrid terms $t\in T_{@\Sigma,\at{k} \ari}$, and
\item $M^\Delta_{k,\pi}$ is the empty set for all relation symbols $(\pi\colon\ari)\in P$.
\end{enumerate}
\end{lemma}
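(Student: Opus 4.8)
\section*{Proof proposal}

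The plan is to establish two things: that $(W^\Delta,M^\Delta)$ as described is a legitimate Kripke structure over $\Delta$ --- the only nontrivial point being the sharing condition --- and that it is an initial object of $\Mod^\HDFOLR(\Delta)$, i.e.\ that every $\Delta$-model $(W,M)$ receives a \emph{unique} homomorphism from it. Here one reads the definition in the statement as specifying $W^\Delta$ to be the $\Sigma^\nom$-structure whose universe is $F^\nom$, which interprets every nominal $k$ as $k$ itself and every modality as the empty relation; the dynamic layer (the actions built from modalities) plays no role, since homomorphisms of Kripke structures only constrain the frame as a $\Sigma^\nom$-structure. This construction and its proof are essentially those used for $\HFOLR$ in~\cite{gai-godel}, with the new ingredients of $\HDFOLR$ entering only through bookkeeping.

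For well-definedness I would first note that $W^\Delta$ has a non-empty universe because $\Sigma^\nom$ is non-void, so $F^\nom\neq\emptyset$; and that each $M^\Delta_k$ is a genuine first-order $\Sigma$-structure because $\Sigma$ non-void gives $T_{@\Sigma,@_k s}\neq\emptyset$ for every sort $s$ (translate a ground $\Sigma$-term of sort $s$ by replacing each operation symbol $\sigma$ with $@_k\sigma$). The sharing condition ${M^\Delta_{k_1}}\red_{\Sigma^\rigid}={M^\Delta_{k_2}}\red_{\Sigma^\rigid}$ is then immediate from the convention $@_k x=x$ for rigid symbols: for a rigid sort $s$ one has $M^\Delta_{k,s}=T_{@\Sigma,s}$, for a rigid operation $\sigma$ one has $M^\Delta_{k,\sigma}(t)=\sigma(t)$, and the relations are empty --- none of which depends on $k$.

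For initiality, fix a $\Delta$-model $(W,M)$. The frame component of any homomorphism $(W^\Delta,M^\Delta)\to(W,M)$ is a $\Sigma^\nom$-homomorphism, hence must send the constant $k$ to $W_k$; it preserves modalities vacuously, so it is forced to be the map $k\mapsto W_k$, and this map is indeed a $\Sigma^\nom$-homomorphism. For the local components I would first record, by induction on term structure, that for a rigid term $t\in T_{@\Sigma}$ the interpretation $M_{w,t}$ does not depend on the world $w$ --- the base case for a rigid constant uses the sharing condition, the case of $@_k\sigma$ fixes the world of evaluation of the head symbol at $W_k$, and the inductive step for flexible and rigid heads alike just invokes the induction hypothesis on the arguments. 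Writing $M_t$ for this common value, I set $h_{k,s}(t):=M_{W_k,t}$ for $t\in T_{@\Sigma,@_k s}$. Unfolding the two interpretation clauses for hybrid terms shows each $h_k$ is a $\Sigma$-homomorphism $M^\Delta_k\to M_{W_k}$ (the relational clauses being vacuous since $M^\Delta_{k,\pi}=\emptyset$), and world-independence of rigid terms yields $h_{k_1,s}=h_{k_2,s}$ on every rigid sort $s$, so $h$ is a bona fide Kripke homomorphism. Uniqueness follows because any homomorphism is forced to commute with the operations of $M^\Delta_k$ and hence is determined on every term by induction; equivalently, assembling the local components of a homomorphism out of $(W^\Delta,M^\Delta)$ produces an $@\Sigma$-homomorphism from the term algebra $T_{@\Sigma}$, which is unique by initiality of term algebras.

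The main obstacle is organisational rather than conceptual: one must keep the polymorphic symbol $@_k$ under control --- as a sort/operation renaming that collapses on rigid symbols on the one hand, and as the modal retrieve operator on the other --- and correctly identify the rigidity constraint on Kripke homomorphisms with the fact that $@_{k_1}s$ and $@_{k_2}s$ are literally the same sort of $@\Sigma$ when $s$ is rigid. Once the world-independence of the interpretation of rigid terms is in hand, the homomorphism and uniqueness verifications reduce to routine unfoldings of the definitions.
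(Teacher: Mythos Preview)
Your proposal is correct and follows precisely the approach the paper gestures at: the paper does not give a detailed argument but simply states that the proof ``is based on the unique interpretation of terms into models, and it is straightforward,'' leaving it as an exercise. Your decomposition into well-definedness (non-emptiness from non-voidness, sharing from the convention $@_k x = x$ on rigid symbols) and initiality (frame map $k\mapsto W_k$, local maps $t\mapsto M_{W_k,t}$ with world-independence of rigid-term interpretation, uniqueness via the initiality of $T_{@\Sigma}$) is exactly the routine unfolding the paper has in mind, so there is nothing to compare.
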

The proof of Lemma~\ref{HFOLR-init} is based on the unique interpretation of
terms into models, and it is straightforward. 
We leave it as an exercise for the reader.
\paragraph{Notations}
Take a signature $\Delta$,
a Kripke structure $(W,M)\in|\Mod^\HDFOLR(\Delta)|$,
a sentence $\varphi\in\Sen^\HDFOLR(\Delta)$, and
two theories $T,\Gamma\subseteq\Sen^\HDFOLR(\Delta)$.
\begin{itemize}
\item We say that $(W,M)$ (globally) satisfies $\varphi$, in symbols, $(W,M)\models \varphi$, if $(W,M)\models^w \varphi$ for all $w\in|W|$.
\item We say that $(W,M)$ satisfies $\Gamma$, in symbols, $(W,M)\models \Gamma$, if $(W,M)\models \gamma$ for all $\gamma\in\Gamma$.
\item We say that $T$ (globally) satisfies $\Gamma$, in symbols, $T\models \Gamma$, 

if $(V,N)\models T$ implies $(V,N)\models \Gamma$ for all $(V,N)\in|\Mod^\HDFOLR(\Delta)|$.
\footnote{Notice that the semantics of  $\varphi\models \gamma$ is different from the standard one, where $\varphi\models \gamma$ is interpreted locally, that is, $(V,N)\models^w\varphi$ implies $(V,N)\models^w\gamma$ for all Kripke structures $(V,N)$ and all possible worlds $w$ in $V$.}
\item We say that $T$ is semantically equivalent to $\Gamma$, in symbols, $T\modelsm \Gamma$, 
if $T\models \Gamma$ and $\Gamma\models T$.
\end{itemize}
\begin{lemma} \label{lemma:h-prop}
Let $\Delta$ be a signature.
\begin{enumerate}
  
\item For all sentences $\varphi$ over $\Delta$,
all nominal variables $z$ for $\Delta$,
all  $(W,M)\in|\Mod^\HDFOLR(\Delta)|$ and all $w\in|W|$,

$(W,M)\models^w \Forall{z}\at{z}\varphi$ iff 
  $(W,M)\models \Forall{z}\at{z}\varphi$ iff
$(W,M)\models\varphi$.

\item For all sentences $\varphi$ and $\gamma$ over $\Delta$, all nominal
  variables $z$ for  $\Delta$, and all nominals $k$ in $\Delta$, we have

$\varphi\modelsm \Forall{z}\at{z}\varphi \modelsm @_k\Forall{z}\at{z}\varphi$,
while $\varphi\Rightarrow \gamma 
\modelsm (\Forall{z}\at{z}\varphi) \Rightarrow \gamma$ does not hold, in
general.
\item For all theories $T$ over $\Delta$, all sentences $\varphi$ and $\gamma$ over $\Delta$ and all nominals $k$ in $\Delta$,

$T\models\at{k} (\varphi \Rightarrow \gamma) $ iff $T\cup\{\at{k}\varphi \}\models\at{k}\gamma$.
\item For all theories $T$ over $\Delta$, all sentences $\varphi$ over $\Delta$ and all nominals $k$ in $\Delta$,

$T\models\at{k}\neg\varphi$ iff $T\cup\{\at{k} \varphi\}\models\bot$.
\item For all theories $T$ over $\Delta$, 
all nominal variables $x$ and $z$ for $\Delta$ and
all sentences $\psi$ over $\Delta[x]$,

$T\cup \{\psi\}$ is satisfiable over $\Delta[x]$ iff $T\cup\{\Exists{x}\Forall{z}\at{z}\psi\}$ is satisfiable over $\Delta$.
\footnote{If we take into consideration the third component of a variable, the correct statement is 
$T\cup \{\psi\}$ is satisfiable over $\Delta[x]$ iff $T\cup\{\Exists{x}\Forall{z}\at{z}\iota(\psi)\}$ is satisfiable over $\Delta$, where $\iota\colon\Delta[x]\hookrightarrow \Delta[x,z]$.}
\end{enumerate}
\end{lemma}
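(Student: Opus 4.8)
The plan is to reduce all five parts to two elementary facts about the local satisfaction relation, both obtained by directly unfolding the semantic clauses. Fact~(A): for a nominal $k$, the truth value of $(W,M)\models^{w}\at{k}\psi$ is just $(W,M)\models^{W_k}\psi$, which does not depend on $w$; since frames have non-empty universes, it follows that the global statement $(W,M)\models\at{k}\psi$ is equivalent to $(W,M)\models^{w}\at{k}\psi$ holding for one (equivalently, every) world $w$. Fact~(B): with $z$ a variable of the nominal sort $\any$ and $\Forall{z}\at{z}\varphi$ read as $\neg\Exists{z}\neg\at{z}\varphi$, the sentence $\Forall{z}\at{z}\varphi$ holds at $w$ iff $(W^{z\leftarrow v},M)\models^{v}\varphi$ for every $v\in|W|$; since $z$ does not occur in $\varphi$, the satisfaction condition (Proposition~\ref{prop:sat-cond}) along $\Delta\hookrightarrow\Delta[z]$ turns this into ``$(W,M)\models^{v}\varphi$ for all $v$'', i.e.\ $(W,M)\models\varphi$.

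Fact~(B) is precisely part~(1): it equates $(W,M)\models^{w}\Forall{z}\at{z}\varphi$ with $(W,M)\models\varphi$, and since the right-hand condition mentions no $w$, the intermediate statement $(W,M)\models\Forall{z}\at{z}\varphi$ comes out at the same time. Part~(2) then falls out: $\varphi\modelsm\Forall{z}\at{z}\varphi$ is part~(1) read over all models, and $\Forall{z}\at{z}\varphi\modelsm@_k\Forall{z}\at{z}\varphi$ holds because, by Fact~(A) and part~(1), $@_k\Forall{z}\at{z}\varphi$ holds at $w$ iff $\Forall{z}\at{z}\varphi$ holds at $W_k$ iff $(W,M)\models\varphi$. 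For the stated non-equivalence I would merely break one direction of the claimed $\modelsm$: over a signature with a nominal $k$, take a Kripke structure with two distinct worlds $w_1\neq w_2$ and $W_k=w_1$, and set $\varphi:=k$ and $\gamma:=\bot$. Then $\Forall{z}\at{z}k$ fails at every world (it already fails at $w_2$), so $(\Forall{z}\at{z}\varphi)\Rightarrow\gamma$ holds at every world, whereas $\varphi\Rightarrow\gamma$, i.e.\ $\neg k$, fails at $w_1$; hence $(\Forall{z}\at{z}\varphi)\Rightarrow\gamma\not\models\varphi\Rightarrow\gamma$.

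For part~(3), unfolding gives that $(W,M)\models^{w}\at{k}(\varphi\Rightarrow\gamma)$ holds iff ``$(W,M)\models^{w}\at{k}\varphi$ implies $(W,M)\models^{w}\at{k}\gamma$''; by Fact~(A) this is independent of $w$, hence equivalent to ``$(W,M)\models\at{k}\varphi$ implies $(W,M)\models\at{k}\gamma$'', and quantifying over the models of $T$ this is exactly the assertion $T\cup\{\at{k}\varphi\}\models\at{k}\gamma$. Part~(4) is the instance $\gamma:=\bot$: the sentences $\at{k}(\varphi\Rightarrow\bot)$ and $\at{k}\neg\varphi$ hold at the same worlds of the same models, $\at{k}\bot$ is satisfied in no model, and $\Gamma\models\bot$ means exactly that $\Gamma$ is unsatisfiable; so $T\models\at{k}\neg\varphi$ iff $T\cup\{\at{k}\varphi\}$ is unsatisfiable iff $T\cup\{\at{k}\varphi\}\models\bot$.

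For part~(5): a $\Delta[x]$-model $(W,M)$ of $T\cup\{\psi\}$ has a $\Delta$-reduct satisfying $T$ by Proposition~\ref{prop:sat-cond}, and $(W,M)$ itself, being a $\Delta[x]$-expansion of that reduct, satisfies $\Forall{z}\at{z}\psi$ at every world by part~(1), so the reduct satisfies $\Exists{x}\Forall{z}\at{z}\psi$ over $\Delta$; conversely, given a $\Delta$-model $(V,N)$ of $T\cup\{\Exists{x}\Forall{z}\at{z}\psi\}$, I would fix any world $w_0$ of $V$, pick a $\Delta[x]$-expansion $(V',N')$ with $(V',N')\models^{w_0}\Forall{z}\at{z}\psi$, and conclude $(V',N')\models\psi$ by part~(1) and $(V',N')\models T$ since $T$ is over $\Delta$. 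I do not expect an essential difficulty beyond this bookkeeping; the one recurring point of care is that ``$\models$'' denotes \emph{global} satisfaction and consequence, so semantically equivalent sentences may fail to be interchangeable underneath $\at{k}$ --- indeed $\varphi$ and $\Forall{z}\at{z}\varphi$ already form such a pair --- which is precisely why each part is reduced directly to Facts~(A) and~(B) rather than via generic equivalence-preserving rewriting, and is also what makes the negative clause in part~(2) worth recording.
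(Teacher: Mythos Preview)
Your proposal is correct and aligns with the paper's own treatment: the paper does not give a proof, declaring the lemma ``straightforward'' and ``left as an exercise,'' and merely records the informal key that $\Forall{z}\at{z}\varphi$ means ``$\varphi$ holds at all worlds'' --- exactly your Fact~(B). Your explicit counterexample for the negative clause in part~(2) and your careful handling of part~(5) (picking one world $w_0$ to extract the witnessing expansion, then invoking part~(1)) go beyond anything the paper spells out.
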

The proof of this lemma is straightforward and we leave it as an exercise for
the interested reader. Informally, the key is that in the sentence     
$\Forall{z}\at{z}\varphi$ the quantifier $\forall z$ binds the free variable $z$
in $\at{z}$, so $\Forall{z}\at{z}\varphi$ means 
`$\varphi$ holds at all worlds $w$'.

By using `storing and retrieving' intuition it is easy to define complex properties. 
For example, temporal until operator $U$ -- with the following semantics: $U(\varphi,\psi)$ is true at a state $w$ if there is a future state $w'$ where $\varphi$ holds, such that $\psi$ holds in all states between $w$ and $w'$ -- can be defined as follows:
$$U(\varphi, \psi) \coloneqq \store{x}\Diamond\store{y}(\varphi \wedge \at{x}\Box(\Diamond y \Rightarrow \psi))$$
The idea is to name the current state $x$ using $\downarrow$, and then by
$\Diamond$, we identify a successor state, which we call $y$, where $\varphi$ holds. 
Using $@$, the point of evaluation is changed to $x$, and then at all successors
of $x$ connected to $y$, $\psi$ holds. 
\section{Logical concepts}\label{3}
In this section, we recall some concepts necessary to prove our results.
\subsection{Substitutions}  \label{sec:subst}
Let $\Delta$ be a signature, 
$C_1$ and $C_2$ two sets of new constants for $\Delta$ of sorts in $S^\ext$, the extended set of rigid sorts.
A substitution $\theta : C_1 \to C_2$ over $\Delta$ is a mapping from $C_1$ to $|(W^{\Delta[C_2]},M^{\Delta[C_2]})|$, the carrier sets of the initial Kripke structure $ (W^{\Delta[C_2]},M^{\Delta[C_2]})$ over $\Delta[C_2]$ defined in Lemma~\ref{HFOLR-init}.

\begin{proposition}[Local satisfaction condition for substitutions~\cite{gai-her}]
A substitution $\theta : C_1\to C_2$ over $\Delta$ uniquely determines:
\begin{enumerate}
\item a sentence function $\theta\colon\Sen^\HDFOLR(\Delta[C_1])\to \Sen^\HDFOLR(\Delta[C_2])$, 
which preserves $\Delta$ and maps each constant $c\in C_1$ to a rigid term $\theta(c)$ over $\Delta[C_2]$, and

\item a reduct functor $\red_\theta\colon\Mod^\HDFOLR(\Delta[C_2])\to\Mod^\HDFOLR(\Delta[C_1])$,
which preserves the interpretation of $\Delta$ and interprets each $c\in C_1$ as $\theta(c)$,
\end{enumerate}
such that the following local satisfaction condition holds:
$$(W,M)\models^w \theta(\gamma)\text{ iff }(W,M)\red_\theta\models^w \gamma$$
for all $\Delta[C_1]$-sentences $\gamma$, all Kripke structures $(W,M)$ over $\Delta[C_2]$ and all possible worlds $w\in|W|$.
\end{proposition}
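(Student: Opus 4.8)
My plan is to follow the pattern of the corresponding result for hybrid first-order logic with rigid symbols from~\cite{gai-her}, checking that the dynamic layer adds nothing essentially new: the only additional sentence-building operator is $\pos{\act}\gamma$, and it is harmless because an action $\act$ is built solely from the modality symbols in $P^\nom$, none of which is affected by a substitution. First I would extend $\theta$ from $C_1$ to a map on the hybrid terms over $\Delta[C_1]$ into the hybrid terms over $\Delta[C_2]$, by induction on terms, fixing every symbol of $\Delta$ and sending $c\in C_1$ to the rigid term $\theta(c)$ (well typed, since $\theta(c)$ has the sort of $c$). Then I would define $\theta$ on sentences by structural induction: on atoms apply the term map (replacing a nominal $k$ by $\theta(k)$ when $k\in C_1$); let $\theta$ commute with $\lnot$, $\vee$, $\at{k}$ and $\pos{\act}$, leaving the action $\act$ unchanged; and for $\store{z}\gamma'$ (respectively $\Exists{X}\gamma''$) set $\theta(\store{z}\gamma')=\store{z}\,\theta^{+}(\gamma')$, where $\theta^{+}$ extends $\theta$ by the identity on $\{z\}$ (respectively on $X$). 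Uniqueness of this function is immediate: preserving $\Delta$, sending each $c$ to $\theta(c)$, and commuting with the sentence-building operators pin it down completely.

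Next I would define the reduct. For a $\Delta[C_2]$-model $(W,M)$ let $(W,M)\red_\theta$ be the $\Delta[C_1]$-model with the same frame -- hence the same worlds and the same interpretation of the modalities in $P^\nom$ -- and the same interpretation of every symbol of $\Sigma$, interpreting each $c\in C_1$ by the value of $\theta(c)$ in $(W,M)$: for a rigid constant this is $M_{w,\theta(c)}$ for an arbitrary world $w$, which is well defined because a short induction on rigid terms, using the sharing condition, shows that their interpretation does not depend on the world; for a nominal it is the world $W_{\theta(c)}$. Thus the sharing condition is preserved and $(W,M)\red_\theta$ is a legitimate Kripke structure; on homomorphisms $\red_\theta$ acts by the identity on the underlying components, which is a homomorphism of the reducts since homomorphisms preserve term interpretations, and functoriality is then clear. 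As before, the stated constraints force uniqueness.

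Finally, the satisfaction condition. I would first prove the term-level statement that the interpretation of $\theta(t)$ at $w$ in $(W,M)$ equals the interpretation of $t$ at $w$ in $(W,M)\red_\theta$, for all hybrid terms $t$ over $\Delta[C_1]$, by induction on $t$ (the base case is the definition of $\red_\theta$, the inductive steps use that $\red_\theta$ preserves the symbols of $\Sigma$ together with the clause for $\at{k}\sigma$). The satisfaction condition then follows by structural induction on $\gamma$: atomic sentences reduce to the term-level statement and the definition of $\red_\theta$ on nominals; $\lnot$, $\vee$, $\at{k}$ and $\pos{\act}\gamma$ are routine -- for the last because $W_\act$ is computed from the $P^\nom$-modalities, which $\red_\theta$ leaves unchanged, so the accessibility relations agree in $(W,M)$ and $(W,M)\red_\theta$. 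The cases that need care are $\store{z}\gamma'$ and $\Exists{X}\gamma''$: here I would apply the induction hypothesis to the extended substitution $\theta^{+}$ and check the identities $(W^{z\leftarrow w},M)\red_{\theta^{+}}=((W,M)\red_\theta)^{z\leftarrow w}$ and, for quantification, that reduction along $\theta^{+}$ is a bijection from the $\Delta[C_2,X]$-expansions of $(W,M)$ onto the $\Delta[C_1,X]$-expansions of $(W,M)\red_\theta$; both hold precisely because $\theta^{+}$ is the identity on the freshly added constants while the $\theta$-images of the constants of $C_1$ do not mention them. The main obstacle is entirely in this last step, and within it in the bookkeeping for $\store{z}$ and $\Exists{X}$ -- i.e. seeing that the reduct functor commutes with signature expansions -- but this is routine, and the dynamic layer contributes no difficulty of its own.
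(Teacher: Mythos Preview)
The paper does not supply a proof of this proposition: it is stated with a citation to~\cite{gai-her} and used as a black box thereafter. Your sketch is the standard argument one would expect in that reference --- extend $\theta$ to hybrid terms, define the sentence translation and the reduct, prove a term-level interpretation lemma, then induct on sentences --- and it is correct; in particular you are right that the dynamic layer is inert here because actions are built only from $P^\nom$, which $\theta$ does not touch, so $W_\act$ is literally the same in $(W,M)$ and $(W,M)\red_\theta$.

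One small point worth making explicit in your write-up: for a nominal $c\in (C_1)_{\any}$, the image $\theta(c)$ lies in the carrier of sort $\any$ of the initial model over $\Delta[C_2]$, which by Lemma~\ref{HFOLR-init} is exactly $F^\nom\cup (C_2)_{\any}$; hence $\theta(c)$ is again a nominal, so expressions like $\at{\theta(c)}\gamma$ and the nominal sentence $\theta(c)$ are well formed. This is implicit in your ``replacing a nominal $k$ by $\theta(k)$'', but it is the reason the atomic and retrieve clauses go through cleanly.
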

\subsection{Fragments} \label{sec:frag}
By restricting the signatures and/or the sentences of $\HDFOLR$, one can obtain well-known hybrid logics studied in the literature.
\begin{definition}[Fragment]\label{def:fragment}
A fragment $\mathcal{L}$ of $\HDFOLR$ is obtained by restricting the syntax of $\HDFOLR$, that is, 
$\Sig^\L$ is a subcategory of $\Sig^\HDFOLR$ and $\Sen^\L\colon\Sig^\L\to\Set$ is a subfunctor of $\Sen^\HDFOLR\colon\Sig^\HDFOLR\to\Set$, such that
\begin{enumerate}
\item for any signature $\Delta\in|\Sig^\L|$, any set $C$ of new nominals and any set $D$ of new rigid constants, we have $\Delta\hookrightarrow\Delta[D,C]\in\Sig^\L$,

\item for any substitution $\theta\colon\pos{C_1,D_1}\to\pos{C_2,D_2}$ over a signature $\Delta\in|\Sig^\L|$ and 
any sentence $\gamma\in\Sen^\L(\Delta[C_1,D_1])$, 
we have $\theta(\gamma)\in\Sen^\L(\Delta[C_2,D_2])$, and

\item $\L$ is closed under subsentence relation, that is,
\begin{itemize}
\item if $\pos{\act_1\comp\act_2}\gamma\in\Sen^\L(\Delta)$ then $\pos{\act_1}\gamma\in \Sen^\L(\Delta)$ and $\pos{\act_2}\gamma\in \Sen^\L(\Delta)$,
\item if $\pos{\act_1\cup\act_2}\gamma\in\Sen^\L(\Delta)$ then $\pos{\act_1}\gamma\in \Sen^\L(\Delta)$ and $\pos{\act_2}\gamma\in \Sen^\L(\Delta)$,
\item if $\pos{\act^*}\gamma\in\Sen^\L(\Delta)$ then $\pos{\act^n}\gamma\in \Sen^\L(\Delta)$ for some $n\in\omega$,
\item if $\pos{\act}\gamma\in \Sen^\L(\Delta)$ then $\gamma\in \Sen^\L(\Delta)$,
\item if $\neg\gamma\in \Sen^\L(\Delta)$ then $\gamma\in \Sen^\L(\Delta)$,
\item if $\vee\Gamma\in \Sen^\L(\Delta)$ then $\gamma\in \Sen^\L(\Delta)$ for all $\gamma\in\Gamma$,
\item if $\at{k}\gamma\in \Sen^\L(\Delta)$ then $\gamma\in \Sen^\L(\Delta)$,
\item if $\store{z}\gamma\in \Sen^\L(\Delta)$ then $\gamma\in \Sen^\L(\Delta[z])$, and
\item if $\Exists{X}\gamma\in \Sen^\L(\Delta)$ then $\gamma\in \Sen^\L(\Delta[X])$.
\end{itemize} 
\end{enumerate}
\end{definition}
According to Definition~\ref{def:fragment}, a fragment $\L$ of $\HDFOLR$ has the same models as $\HDFOLR$.
By the closure under the subsentence relation, the sentences of $\L$ are
constructed from some atomic sentences by applying Boolean connectives,
possibility over action relations, retrieve, store or existential quantifiers,
if these sentence building operators are available in $\L$. 
It does not imply 
that $\L$ is closed under any of
these operators.

\begin{example} [Hybrid First-Order Logic with Rigid symbols ($\HFOLR$) \cite{gai-godel}] \label{ex:HFOLR}
This is the hybrid variant of $\HDFOLR$ obtained by discarding structured actions and allowing possibility over binary modalities.
According to \cite{gai-godel}, $\HFOLR$ is compact.
\end{example}
\begin{example}[Hybrid-Dynamic Propositional Logic ($\HDPL$)] \label{ex:HDPL}
This is the dynamic variant of the most common form of multi-modal hybrid logic (e.g. \cite{ArecesB01}).
$\HDPL$ is obtained from $\HDFOLR$ by restricting the signatures $\Delta=(\Sigma^\nom,\Sigma^\rigid\subseteq\Sigma)$ such that the set of sorts in $\Sigma$ is empty, and the set of sentences is  given by the following grammar:
$$\gamma \Coloneqq  \rho \alt k \alt \at{k}\gamma \alt \neg \gamma \alt \vee \Gamma \alt \pos{\act}\gamma $$
where
\begin{enumerate*}[label=(\alph*)]
\item $\rho$ is a propositional symbol,
\item $k \in  F^\nom $ is a nominal,
\item $\act \in A^{\nom}$ is an action, and
\item $\Gamma$ is a finite set of sentences over $\Delta$.
\end{enumerate*}
Notice that if $\Sigma=(S,F,P)$ and $S=\emptyset$ then $P$ contains only propositional symbols.
$\HPL$ is the fragment of $\HDPL$ obtained by discarding structured actions.
\end{example}
\begin{example}[Rigid First-Order Hybrid Logic ($\RFOHL$) \cite{DBLP:conf/wollic/BlackburnMMH19}] \label{ex:RFOHL}
This logic is obtained from $\HFOLR$ by restricting the signatures $\Delta=(\Sigma^\nom,\Sigma^\rigid\subseteq\Sigma)$ such that 
\begin{enumerate*}[label=(\alph*)]
\item $\Sigma^\nom$ has only one binary modality, 
\item $\Sigma$ is single-sorted, 
\item there are no rigid function symbols except variables (regarded here as special constants), and 
\item there are no rigid relation symbols.
\end{enumerate*}
\end{example}
All examples of logics given above are fragments of $\HDFOLR$.
In the following, we give an example of logic which is obtained from $\HDFOLR$ by some syntactic restrictions and it is not a fragment according to Definition~\ref{def:fragment}.
\begin{example}[Hybrid First-Order Logic with user-defined Sharing ($\HFOLS$)] \label{ex:HFOLS}
This logic has the same signatures and Kripke structure as $\HFOLR$.
The sentences are obtained from atoms constructed with open terms only, that is,
if $\Delta=(\Sigma^\nom,\Sigma^\rigid\subseteq\Sigma)$, 
all (ground) equations over $\Delta$ are of the form $t_1=t_2$, where $t_1,t_2\in T_\Sigma$, and
all (ground) relation over $\Delta$ are of the form $\pi(t)$, where $(\pi:\ari)\in P$ and $t\in T_{\Sigma,\ari}$.
Variants of $\HFOLS$ have been used in works such as \cite{martins,dia-msc,dia-qvh}.
\end{example}
$\HFOLS$ is not a fragment of $\HDFOLR$ in the sense of Definition~\ref{def:fragment}, as it is not closed under substitutions.
Retrieve is applied only to sentences and not to function or relation symbols.
However, according to \cite{gai-godel}, that is no loss of expressivity as $\HFOLS$ has the same expressive power as $\HFOLR$.
\begin{lemma} \label{lemma:HFOLR-HFOLS}
For each signature $\Delta$ and each sentence $\gamma\in\Sen^\HFOLR(\Delta)$ there exists a sentence $\gamma'\in \Sen^\HFOLS(\Delta)$ such that $(W,M)\models^w\gamma$ iff $(W,M)\models^w\gamma'$ for all Kripke structures $(W,M)$ over $\Delta$ and all possible worlds in $W$.
\end{lemma}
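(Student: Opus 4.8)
The plan is to argue by structural induction on the $\HFOLR$-sentence $\gamma$. Since $\HFOLS$ and $\HFOLR$ share the same signatures and Kripke structures and $\HFOLS$ admits exactly the same sentence-building operators as $\HFOLR$ --- retrieve, negation, disjunction, store, existential quantification over rigid variables, and possibility over modalities --- the inductive steps for the connectives are immediate: given a compound $\gamma$, apply the induction hypothesis to its immediate subsentences (over the expanded signature $\Delta[z]$ in the case of store, $\Delta[X]$ in the case of existential quantification) and re-apply the outermost operator; the local satisfaction clause for that operator, together with Proposition~\ref{prop:sat-cond}, gives the desired equivalence. Thus the whole content of the lemma is the atomic case.

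Let $\rho$ be an atomic $\HFOLR$-sentence over $\Delta$: a nominal sentence $k$, a hybrid equation $t_1=t_2$, or a hybrid relation $\pi(t_1,\ldots,t_n)$. A nominal sentence carries no terms and is already an $\HFOLS$-sentence; and $\rho$ is already an $\HFOLS$-sentence whenever every term occurring in it lies in $T_\Sigma$, i.e. whenever no rigidified operation or relation symbol occurs in $\rho$. So it remains to eliminate the rigidified symbols. The key semantic fact, proved by a straightforward induction on the structure of hybrid terms using the interpretation clause for $(\at{k}\sigma)(t)$ and the sharing condition, is: if $t$ is a hybrid term of sort $\at{k}s$ with $s$ flexible, all of whose rigid-sorted subterms happen to be rigid terms, then $M_{w,t}$ does not depend on $w$ and equals $M_{W_k,t^\flat}$, where $t^\flat\in T_\Sigma$ is obtained from $t$ by deleting every $\at{k}$ decoration. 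Hence such a hybrid equation $t_1=t_2$ is locally equivalent to $\at{k}(t_1^\flat=t_2^\flat)$, and such a hybrid relation $(\at{k}\pi_0)(t_1,\ldots,t_n)$ to $\at{k}\pi_0(t_1^\flat,\ldots,t_n^\flat)$ --- both $\HFOLS$-sentences, since $\HFOLS$ retains the retrieve operator.

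In general one must also deal with rigid-sorted subterms that are built from flexible operation symbols, hence whose values may vary across worlds, occurring inside a rigidified context. The idea is to pull such a subterm $v$ out of $\rho$ before delocalizing: using store, name the current world by a fresh nominal $z$; using existential quantification, introduce a fresh rigid constant $c$ of the sort of $v$; then replace $v$ in $\rho$ by $c$ and conjoin $\at{z}(c=v)$, which transports the value $M_{w,v}$ across the surrounding retrievals precisely because $c$ is rigid. Iterating this (together with the analogous treatment of $v$'s own subterms) a finite number of times, measured by the overall size of the hybrid terms appearing in $\rho$, leaves every remaining rigidified subterm in the scope of a single outer $\store{z}$ and built solely from rigidified symbols and rigid terms; the delocalization of the previous paragraph then applies and turns the atom into a Boolean combination of $\at{k}$-prefixed atoms over open terms, over a signature $\Delta[C]$ with $C$ a finite set of fresh rigid constants. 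Quantifying the constants of $C$ away existentially and closing off the store yields the required $\gamma'\in\Sen^\HFOLS(\Delta)$; correctness of each rewriting step is checked against the satisfaction clauses for store and retrieve and against Lemma~\ref{lemma:h-prop}.

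The main obstacle is the bookkeeping in this last reduction. The recursion on the shape of hybrid terms has to respect the typing discipline: variables --- and hence the fresh constants one is entitled to introduce --- may only have sorts from $S^\ext$, so a rigidified term of a genuinely flexible sort $\at{k}s$ cannot be named by a constant and must instead be dealt with by delocalizing the entire retrieval subtree above it. And at each rewriting step one has to verify, via Proposition~\ref{prop:sat-cond} and the clauses for store and retrieve, that the local satisfaction relation is preserved. The connective cases, by contrast, are entirely routine.
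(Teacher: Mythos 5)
Your proposal is correct and, in outline, is exactly the route the paper takes: reduce to the atomic case by structural induction (the connective cases are routine because $\HFOLS$ retains all of $\HFOLR$'s sentence-building operators), then eliminate the rigidified symbols from atoms. The difference is that the paper disposes of the atomic case in one line by citing Lemma~2.20 of \cite{gai-godel}, whereas you reconstruct that argument; your reconstruction is essentially sound, but one case slips through your explicit case split. A rigid-sorted subterm headed by a rigidified flexible symbol, e.g.\ $(\at{k'}\sigma)(d)$ with $\sigma\in F^\flexible$ of rigid result sort and $d$ rigid, has a world-independent value, so it is caught neither by your delocalization of flexible-sorted $\at{k}$-trees (its sort is rigid, so that paragraph does not apply) nor by your pull-out criterion ``whose values may vary across worlds''; yet it is not an open term, and simply deleting its decoration would re-evaluate $\sigma$ at the wrong world. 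It succumbs to the same device you already use: name it by a fresh rigid constant $c$ with defining sentence $\at{k'}(c=\sigma(d'))$, placed under the retrieve of the rigidifying nominal $k'$ rather than under $\store{z}\at{z}\cdot$, after recursively naming $d$. With the pull-out criterion broadened to ``every maximal rigid-sorted subterm containing a rigidified symbol,'' your construction goes through and matches what the cited lemma provides.
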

\begin{proof}
By using \cite[Lemma 2.20]{gai-godel} which shows that for any atomic sentence in $\HFOLR$ there exists a sentence in $\HFOLS$ which is satisfied by the same class of Kripke structures. 
\end{proof}
The forcing technique and the Omitting Types Theorem are not applicable to $\HFOLS$ even if it has the same expressivity power as $\HFOLR$.
This is due to the absence of a proper support for the substitutions described in Section~\ref{sec:subst}.
By Lemma~\ref{lemma:HFOLR-HFOLS}, the results can be borrowed from $\HFOLR$ to $\HFOLS$.
It is worth noting that $\HFOLS$ can be extended with features of dynamic logics such that the dynamic variant of $\HFOLS$ matches the expressivity of $\HDFOLR$ by the same arguments used in the proof of Lemma~\ref{lemma:HFOLR-HFOLS} .
\subsection{Reachable models}
In this section, we give a category-based description of the models which consist of elements that are denotations of terms.
The concept of reachable model appeared in institutional model-theory in \cite{Petria07}, and it has been used successfully in several abstract developments such as proof-theoretic results~\cite{gai-com,gai-cbl,gai-bir} as well as model-theoretic results~\cite{gai-int,gai-ott,gai-init,gai-dls,gai-her,tutu-iilp}.

\begin{definition}
A Kripke structure $(W,M)$ over a signature $\Delta=(\Sigma^\nom,\Sigma^\rigid\subseteq\Sigma)$ is \emph{reachable} if for 
each set of new nominals $C$,
each set of new rigid constants $D$, and 
any expansion $(W',M')$ of $(W,M)$ to $\Delta[C]$, 
there exists a substitution  $\theta\colon \pos{C,D}\to\pos{\emptyset,\emptyset}$ over $\Delta$ such that 
$(W,M)\red_\theta=(W',M')$.
\end{definition}

\begin{proposition} [Reachable Kripke structures \cite{gai-her}] \label{reach-HFOLR}
A Kripke structure is reachable iff 
\begin{enumerate}
\item its set of states consists of denotations of nominals, and 
\item its carrier sets for the rigid sorts consist of denotations of rigid terms.
\end{enumerate}
\end{proposition}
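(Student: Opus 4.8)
The plan is to prove the equivalence in its two directions, using Lemma~\ref{HFOLR-init} to translate between the abstract language of substitutions and the concrete language of nominals and rigid terms. The first thing I would record is that, by that lemma, the carrier of the initial Kripke structure $(W^\Delta,M^\Delta)$ over $\Delta$ is transparent: its set of states is exactly $F^\nom$, and for a rigid sort $s$ its $s$-carrier is exactly $T_{@\Sigma,s}$. Hence a substitution $\theta\colon\pos{C,D}\to\pos{\emptyset,\emptyset}$ over $\Delta$ is nothing more than an assignment of a nominal $\theta(c)\in F^\nom$ to each new nominal $c\in C$ together with a rigid $\Delta$-term $\theta(d)$ to each new rigid constant $d\in D$; and, by the local satisfaction condition for substitutions (\S\ref{sec:subst}), the reduct $(W,M)\red_\theta$ is precisely the expansion of $(W,M)$ along $\Delta\hookrightarrow\Delta[C,D]$ that agrees with $(W,M)$ on $\Delta$, interprets each $c$ as the world $W_{\theta(c)}$, and interprets each $d$ as the element $M_{\theta(d)}$. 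With this dictionary in hand, both implications are short.

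For the ``only if'' direction I would assume $(W,M)$ reachable and apply the definition with singleton sets. To get conclusion (1), fix an arbitrary world $w_0\in|W|$, take $C=\{c\}$ consisting of one fresh nominal and $D=\emptyset$, and let $(W',M')$ be the expansion of $(W,M)$ interpreting $c$ as $w_0$; reachability supplies some $\theta$ with $(W,M)\red_\theta=(W',M')$, and since $\theta(c)$ is a nominal $k$ while $(W,M)\red_\theta$ interprets $c$ as $W_k$, we conclude $w_0=W_k$. To get conclusion (2), fix a rigid sort $s$ and an element $a$ of the carrier $M_{w,s}$ -- well defined independently of $w$ by the sharing condition -- take $C=\emptyset$ and $D=\{d\}$ with $d$ of sort $s$, and let $(W',M')$ interpret $d$ as $a$; the $\theta$ returned by reachability sends $d$ to a rigid term $t\in T_{@\Sigma,s}$, and $(W,M)\red_\theta=(W',M')$ forces $a=M_t$. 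Since $w_0$, $s$, and $a$ were arbitrary, (1) and (2) hold.

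For the ``if'' direction I would assume (1) and (2), fix $C$, $D$, and an expansion $(W',M')$ of $(W,M)$ along $\Delta\hookrightarrow\Delta[C,D]$, and build the required substitution by hand. Adjoining fresh nominals and fresh rigid constants alters neither the frame universe nor the carriers of the rigid sorts, so $W'_c\in|W|$ for each $c\in C$ and, by the sharing condition for $\Delta[C,D]$, each $d\in D$ of sort $s$ is interpreted by a world-independent element $M'_d$ of $M_{w,s}$. Using (1), pick for each $c$ a nominal $k_c$ with $W_{k_c}=W'_c$; using (2), pick for each $d$ a rigid term $t_d\in T_{@\Sigma,s}$ with $M_{t_d}=M'_d$; and set $\theta(c)=k_c$, $\theta(d)=t_d$. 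By Lemma~\ref{HFOLR-init} these values lie in $|(W^\Delta,M^\Delta)|$, so $\theta\colon\pos{C,D}\to\pos{\emptyset,\emptyset}$ is a legitimate substitution. By the description of $\red_\theta$ recalled in the first paragraph, $(W,M)\red_\theta$ agrees with $(W,M)$ -- hence with $(W',M')$ -- on $\Delta$, and interprets each $c$ as $W_{k_c}=W'_c$ and each $d$ as $M_{t_d}=M'_d$; so $(W,M)\red_\theta=(W',M')$ and $(W,M)$ is reachable.

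The only step where I expect to need genuine care is the dictionary set up in the first paragraph -- reading off from Lemma~\ref{HFOLR-init} that the states and rigid-sort elements of the initial model are literally nominals and rigid terms, observing that expansion by fresh nominal or rigid constants leaves the frame and the rigid carriers untouched, and extracting from the local satisfaction condition for substitutions exactly which expansion of $(W,M)$ the reduct $(W,M)\red_\theta$ is. Once those facts are in place, the remainder of both directions is routine diagram chasing.
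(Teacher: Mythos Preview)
The paper does not supply its own proof of this proposition: it is imported from~\cite{gai-her}, and the only additional content is the remark afterwards restating the conclusion as surjectivity of the unique homomorphism from the initial Kripke structure $(W^\Delta,M^\Delta)$. So there is nothing to compare against directly; the relevant question is simply whether your argument is sound, and it is.

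Your dictionary---reading off from Lemma~\ref{HFOLR-init} that $W^\Delta=F^\nom$ and, for rigid $s$, $M^\Delta_{k,s}=T_{@\Sigma,s}$ (using the convention $\at{k}x\coloneqq x$ for rigid symbols), and then unpacking $\red_\theta$ via the satisfaction condition for substitutions---is exactly right, and both directions follow as you outline. One cosmetic point: the definition of reachable in the paper literally says ``expansion of $(W,M)$ to $\Delta[C]$'' rather than $\Delta[C,D]$, which would make your $C=\emptyset$, $D=\{d\}$ instantiation vacuous; this is evidently a typo (the reduct $(W,M)\red_\theta$ lives over $\Delta[C,D]$, and reachability is invoked later in the paper, e.g.\ in the proof of Theorem~\ref{GMT}, for expansions along variables of arbitrary extended-rigid sort), and you are right to read it as $\Delta[C,D]$. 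Your proof is in fact the natural unfolding of the surjectivity reformulation the paper records immediately after the statement.
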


By Proposition~\ref{reach-HFOLR}, a model $(W,M)$ is reachable iff the unique homomorphism from the initial Kripke structure $h\colon (W^\Delta,M^\Delta)\to (W,M)$ is surjective, that is, $h\colon W^\Delta\to W$ is surjective and $h_w\colon M^\Delta_w\to M_{h(w)}$ is surjective for all possible worlds $w\in|W^\Delta|$. 

\subsection{Basic sentences}
In this section, we recall an important property of certain simple sentences of hybrid logics, which play the role analogous to atomic sentences of first-order logic.

\begin{definition}[Basic set of sentences~\cite{dia-ult}]
\label{def:basic}
A set of sentences $B$ over a signature $\Delta=(\Sigma^\nom,\Sigma^\rigid\subseteq\Sigma)$ is \emph{basic} if there exists a Kripke structure $(W^B,M^B)$ such that 
$$(W,M)\models B \mbox{ iff there exists a homomorphism } h:(W^B,M^B)\to (W,M)$$
for all Kripke structures $(W,M)$. 
We say that $(W^B,M^B)$ is a \emph{basic model} of~$B$. 
If in addition the homomorphism $h$ is unique then the set $B$ is called \emph{epi-basic}.
\end{definition}
According to \cite{dia-ult,dia-book}, in first-order logic, any set of atomic
sentences is basic. 
One important property of basic sentences is the preservation of their satisfaction along  homomorphisms: 
given a set of basic sentences $B$ and a homomorphism $h\colon M\to N$, if $M\models B$ then $N\models B$.
In hybrid logics, this property does not hold, in general.
The following example is from \cite{gai-godel}.

\begin{example} \label{ex:locally-basic}
 Consider the following $\HPL$ signature $\Delta=(\Sigma^\nom,\Prop)$ such that 
 $F^\nom=\{k\}$, $P^\nom=\{\lambda\colon\any~\any\}$ and $\Prop=\{\rho\}$.
Let $h\colon (W,M)\hookrightarrow (W',M')$ be the inclusion homomorphism defined by:
 \begin{enumerate}
 \item $|W|=\{k\}$, $W_\lambda=\{(k,k)\}$ , $\rho$ is true in $M_k$, and
 
 \item $|W'|=\{k,w\}$, $W'_\lambda=\{(k,k)\}$, $\rho$ is true in $M'_k$, $\rho$ is not true in $M'_w$.
 \end{enumerate}
\end{example}

Example~\ref{ex:locally-basic} points out a significant difference between
ordinary logics and  hybrid (or, more generally, modal) logics.
 Note that $(W,M)\models^\HPL k$, $(W,M)\models^\HPL \langle\lambda\rangle k$ and $(W,M)\models^\HPL \rho$.
 Since $(W',M')\not\models^w k$, $(W',M')\not\models^w \langle\lambda\rangle k$ and $(W',M')\not\models^w \rho$ we have  
 $(W',M')\not\models^\HPL k$, $(W',M')\not\models^\HPL \langle\lambda\rangle k$ and $(W',M')\not\models^\HPL \rho$.
Thus, homomorphisms do not preserve satisfaction of atomic sentences.
Hence, atomic sentences are not basic in $\HPL$ (the same example
works for any modal logic). Note however that local satisfaction (satisfiaction
at a world) is preserved, and in hybrid logic the retrieve operator ($@$) lifts
local satisfaction to global. This motivates the next definition.  

\begin{definition}[Locally basic set of sentences~\cite{gai-godel}]
A set of sentences $\Gamma$ over a signature $\Delta$ is \emph{locally (epi-)basic} if $@\Gamma\coloneqq \{\at{k}\gamma \mid k\in F^\nom \text{ and } \gamma\in \Gamma\}$ is (epi-)basic.
\end{definition}
Notice that $@\Gamma$ is semantically equivalent to $@@\Gamma$.
We denote by $\Sen^\HDFOLR_0(\Delta)$ the set of all \emph{extended atomic} sentences.
\begin{enumerate}
\item nominals $k\in F^\nom$,
\item nominal relations $\pos{\lambda}k$, where $\lambda \in  P^\nom$ is a binary modality and $k\in F^\nom$,
\item hybrid equations $t_1=t_2$, where $t_1,t_2\in T_{\overline{\Sigma}}$, and
\item hybrid relations $\pi(t)$, where $\pi\in\overline{P}_\ari$, $t\in (T_{\overline{\Sigma}})_\ari$ and $\ari\in (\overline{S})^*$. 
\end{enumerate}

We denote by $\Sen^\HDFOLR_b(\Delta)$ the set of all sentences obtained from an
extended atomic sentence by applying retrieve ($@$) at most once.

\begin{proposition} [Locally basic set of sentences~\cite{gai-her,gai-godel}] \label{prop:loc-basic}
Given a signature $\Delta$, every set of sentences $B\subseteq \Sen^\HDFOLR_b(\Delta)$ is locally basic. 
Moreover, if $\Delta$ is non-void, then $B$ is locally epi-basic and its basic model $(W^B,M^B)$ is reachable.
\end{proposition}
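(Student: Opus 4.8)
The plan is to construct the basic model $(W^B, M^B)$ explicitly as a quotient of the initial term model $(W^\Delta, M^\Delta)$ provided by Lemma~\ref{HFOLR-init}, then verify the required universal property. First I would observe that, since $B \subseteq \Sen^\HDFOLR_b(\Delta)$, every sentence in $@B = \{\at{k}\gamma \mid k \in F^\nom,\ \gamma \in B\}$ is $\at{k}$ applied to an extended atomic sentence, i.e.\ of the form $\at{k} k'$, $\at{k}\pos{\lambda}k'$, $\at{k}(t_1 = t_2)$ or $\at{k}\pi(t)$. Each such sentence imposes a constraint that is "positive" at the level of worlds and local algebras: a nominal identification, a pair in an accessibility relation, an equation between hybrid terms evaluated at world $k$, or a relational membership at world $k$. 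The key move is that by the local satisfaction condition (Proposition~\ref{prop:sat-cond}) and the definition of local satisfaction, $(W,M)\models \at{k}\rho$ iff $(W,M)\models^{W_k}\rho$, so the whole set $@B$ translates into a simultaneous system of such local constraints on any candidate model.

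Next I would build $(W^B, M^B)$ by taking the congruence $\equiv$ on the initial structure $(W^\Delta,M^\Delta)$ generated by all the constraints coming from $@B$: identify nominals $k \equiv k'$ whenever $\at{k} k' \in @B$ forces it (and close under the modal/equational consequences), quotient each local carrier $M^\Delta_k = T_{@\Sigma, @_k s}$ by the least congruence identifying $@_k t_1$ and $@_k t_2$ for each hybrid equation $t_1 = t_2$ required to hold at $k$, and let $M^B_k$ interpret each relation symbol by the set of tuples forced by the hybrid relations in $B$. Then for any Kripke structure $(W,M)$: if $(W,M)\models B$ then the unique homomorphism $(W^\Delta,M^\Delta)\to (W,M)$ (which exists because $(W^\Delta,M^\Delta)$ is initial) factors through the quotient, because $(W,M)$ satisfies exactly the constraints we quotiented by; conversely, if there is a homomorphism $h\colon (W^B,M^B)\to (W,M)$, then since homomorphisms preserve local satisfaction of extended atomic sentences (they preserve equations, relational atoms, nominal denotations and accessibility pairs in the forward direction) and $(W^B,M^B)$ satisfies $B$ by construction, we get $(W,M)\models B$. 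This establishes the "basic" part.

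For the "moreover" clause, assume $\Delta$ is non-void. Non-voidness guarantees $F^\nom \neq \emptyset$ and $T_{\overline\Sigma,s}\neq\emptyset$ for every sort, so the initial model $(W^\Delta,M^\Delta)$ and hence its quotient $(W^B,M^B)$ is a genuine Kripke structure (all carriers non-empty, sharing condition inherited because rigid sorts are interpreted by rigid terms, which are not touched by the flexible quotienting). Uniqueness of $h$ — the epi-basic property — follows because $(W^B,M^B)$ is generated by denotations of nominals and rigid/hybrid terms: any homomorphism out of it is determined on generators by where it must send $W_k^B$ (forced to $W_{h(k)}$, but $h(k)$ is itself pinned down by the fact that $W_k^B$ is the $\equiv$-class of $k$ and homomorphisms commute with nominal interpretation) and on each $M^B_k$ by the interpretation of terms, which any homomorphism must respect. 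Finally, reachability is immediate from Proposition~\ref{reach-HFOLR}: the set of states of $(W^B,M^B)$ consists of $\equiv$-classes of nominals, hence of denotations of nominals, and each rigid carrier consists of $\equiv$-classes of rigid terms, hence of denotations of rigid terms.

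The main obstacle I expect is handling the interaction between the equational quotienting on local carriers and the rigidity/sharing condition: one must check that identifying hybrid terms at some worlds does not force inconsistent identifications among rigid terms (which would break the sharing condition or, worse, produce no model at all), and that the quotient congruence is compatible across all worlds simultaneously — essentially a confluence/well-definedness argument for the simultaneous term rewriting induced by $@B$. This is where the restriction $B \subseteq \Sen^\HDFOLR_b(\Delta)$ (retrieve applied at most once, and only to extended atoms, so no nesting of modalities or Booleans) is doing real work, and the cleanest route is to cite the analogous constructions in \cite{gai-her,gai-godel} for the purely hybrid fragment and note that the dynamic actions do not appear in extended atomic sentences, so the argument transfers verbatim.
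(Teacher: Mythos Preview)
The paper does not supply its own proof of this proposition: it is stated with the citations \cite{gai-her,gai-godel} and left at that, so there is nothing to compare against beyond the fact that the result is imported from prior work. Your sketch is precisely the standard construction one finds in those references---build the basic model for $@B$ as a quotient of a term structure by the congruence generated by the equational and relational constraints in $@B$, then read off the universal property from initiality---and your closing remark that the cleanest route is simply to cite \cite{gai-her,gai-godel} is exactly what the paper does.

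One small technical point: your plan starts from the initial term model $(W^\Delta,M^\Delta)$ of Lemma~\ref{HFOLR-init}, but that lemma requires $\Delta$ to be non-void, whereas the first clause of the proposition (``$B$ is locally basic'') is asserted for arbitrary $\Delta$. For the general case you would need the more direct construction of a basic model from the syntax of $@B$ itself (as in \cite{dia-ult}), reserving the quotient-of-the-initial-model argument for the ``moreover'' clause where non-voidness is assumed and reachability is claimed. This is a packaging issue rather than a genuine obstacle, and in the paper the proposition is only ever invoked (in Theorem~\ref{GMT}) under the non-void hypothesis anyway.
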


\begin{definition}[Rigidification] \label{def:rigidification}
For any signature $\Delta=(\Sigma^\nom,\Sigma^\rigid\subseteq\Sigma)$, the \emph{rigidification function} $\At{k}\_\colon T_{\overline\Sigma}\to T_{@\Sigma}$, where $k\in F^\nom$, is recursively defined by: \smallskip

\begin{minipage}{\textwidth}
\begin{itemize}
\item $\At{k} \sigma(t)\coloneqq 
\left\{\begin{array}{l l}
 (@_k \sigma)(\At{k} t) & \mbox{ if }(\sigma\colon\ari\to s)\in F^\flexible,\\
\sigma(\At{k} t)       & \mbox{ if }(\sigma\colon\ari\to s)\in F^\rigid\cup @F^\flexible.
\end{array} \right.$ 
\end{itemize}
\end{minipage}    
\smallskip

Its extension $\At{k}\_\colon\Sen^\HFOLR(\Delta)\to\Sen^\HFOLR(\Delta)$ is recursively defined by: 
\smallskip

\begin{tabular}{l l}
\begin{minipage}{0.5\textwidth}
\begin{itemize}
\item $\At{k} k'\coloneqq \at{k}k'$
\item $\At{k} \pos{\lambda}(k') \coloneqq \at{k} \pos{\lambda}(k')$
\item $\At{k}(t_1 = t_2) \coloneqq (\At{k}t_1 =\At{k}t_2)$
\item $\At{k} \pi(t)\coloneqq
\left\{\begin{array}{l l}
(@_k \pi)(\At{k} t) & \mbox{ if } \pi\in P^\flexible\\
\pi(\At{k} t)         & \mbox{ if } \pi\in P^\rigid\cup@P^\flexible
\end{array} \right.$
\end{itemize}
\end{minipage}
&
\begin{minipage}{0.45\textwidth}   
\begin{itemize}
\item $\At{k} \neg \gamma\coloneqq\neg \At{k}\gamma $
\item $\At{k} \vee \Gamma \coloneqq\vee \At{k}\Gamma $
\item $\At{k} \at{k'}\gamma \coloneqq \At{k'} \gamma$
\item $\At{k} \Exists{X}\gamma \coloneqq \Exists{X}\At{k}\gamma$
\end{itemize}
\end{minipage}
\end{tabular}
\smallskip

Any sentence semantically equivalent to a sentence in the image of $\At{k}$ is called a \emph{rigid sentence}.
\end{definition}

The proof of the following lemma is straightforward and we leave it as an exercise for the readers.
\begin{lemma} \label{lemma:rigidification}
Any sentence $\at{k}\gamma$ is semantically equivalent to $\At{k}\gamma$. 
Hence, $\at{k}\gamma$ is rigid.
\end{lemma}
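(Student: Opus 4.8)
The plan is to prove, by structural induction on $\gamma$ following the clauses of Definition~\ref{def:rigidification}, the sharper \emph{local} statement
\[
(W,M)\models^{w}\At{k}\gamma \quad\Longleftrightarrow\quad (W,M)\models^{W_k}\gamma \qquad(\star)
\]
for every nominal $k\in F^\nom$, every Kripke structure $(W,M)$ over $\Delta$, and every world $w\in|W|$, keeping the nominal $k$ \emph{universally quantified} throughout the induction. Granting $(\star)$, the lemma is immediate: for every $w$ we also have $(W,M)\models^{w}\at{k}\gamma$ iff $(W,M)\models^{W_k}\gamma$ by the semantics of retrieve, so $\at{k}\gamma$ and $\At{k}\gamma$ hold at exactly the same worlds of any model; in particular $\at{k}\gamma\modelsm\At{k}\gamma$, and since $\At{k}\gamma$ lies in the image of $\At{k}$ by construction, $\at{k}\gamma$ is rigid.

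The atomic cases rest on an auxiliary term identity: for every hybrid term $t\in T_{\overline\Sigma}$, every $(W,M)$ and every $w$,
\[
M_{w,\At{k}t}=M_{W_k,t},
\]
proved by induction on $t$. If $t=\sigma(t_0)$ with $\sigma$ flexible then $\At{k}t=(\at{k}\sigma)(\At{k}t_0)$, and the interpretation clause for $@$-prefixed operation symbols gives $M_{w,(\at{k}\sigma)(\At{k}t_0)}=M_{W_k,\sigma}(M_{w,\At{k}t_0})$, which by the inductive hypothesis equals $M_{W_k,\sigma}(M_{W_k,t_0})=M_{W_k,\sigma(t_0)}$; if $\sigma$ is rigid or already $@$-prefixed, $\At{k}$ retains the symbol and one concludes from the inductive hypothesis together with the sharing condition $M_w\red_{\Sigma^\rigid}=M_{W_k}\red_{\Sigma^\rigid}$ (respectively the clause for $@$-prefixed symbols). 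With this identity the atomic instances of $(\star)$ follow: for a nominal $k'$ and a nominal relation $\pos{\lambda}(k')$ the rigidification is literally $\at{k}$ applied, so $(\star)$ holds by the semantics of $@$; for a hybrid equation $t_1=t_2$ and a hybrid relation $\pi(t)$ one plugs the term identity into the local satisfaction clauses, invoking the sharing condition when $\pi$ is rigid and the interpretation convention for $@$-prefixed relation symbols when $\pi\in@P^\flexible$.

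For the inductive step, the cases $\At{k}\neg\gamma=\neg\At{k}\gamma$ and $\At{k}\vee\Gamma=\vee\At{k}\Gamma$ are immediate from $(\star)$ applied to the subsentences. For retrieve, $\At{k}\at{k'}\gamma=\At{k'}\gamma$, and the inductive hypothesis applied to $\gamma$ \emph{at the nominal $k'$} gives $(W,M)\models^{w}\At{k'}\gamma$ iff $(W,M)\models^{W_{k'}}\gamma$ iff $(W,M)\models^{W_k}\at{k'}\gamma$ --- this is exactly where it matters that the induction hypothesis ranges over all nominals. For existential quantification, $\At{k}\Exists{X}\gamma=\Exists{X}\At{k}\gamma$; one uses $(\star)$ for $\gamma$ over the expanded signature $\Delta[X]$ together with the observation that any expansion $(W',M')$ of $(W,M)$ to $\Delta[X]$ satisfies $W'_k=W_k$ (since $k$ is a symbol of $\Delta$), so the existential witnesses on the two sides coincide.

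I do not anticipate a genuine obstacle; as the paper remarks, the argument is bookkeeping. The points needing a little care are: (i) the typing in the term identity --- recall $\at{k}s=s$ for rigid sorts, so $\At{k}$ sends a hybrid term of sort $s$ to a rigid term of sort $\at{k}s$ and every term remains well-formed, in particular both sides of $\At{k}(t_1=t_2)$ have sort $\at{k}s$; (ii) making explicit the interpretation convention for the $@$-prefixed relation symbols in $@P^\flexible$, parallel to the given clause for operation symbols; and (iii) formulating the induction with $k$ universally quantified, without which the retrieve case does not close.
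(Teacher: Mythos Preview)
Your proposal is correct and is precisely the natural argument; the paper itself omits the proof entirely, declaring it ``straightforward'' and leaving it as an exercise. Your local formulation $(\star)$, the auxiliary term identity $M_{w,\At{k}t}=M_{W_k,t}$, and the care you take to keep $k$ universally quantified for the retrieve clause are exactly what that exercise demands.
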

\section{Forcing}\label{4}
Forcing is a method of constructing models satisfying some properties forced by some conditions.
In this section, we generalize the forcing relation for hybrid logics defined in \cite{gai-godel} to hybrid-dynamic first-order logic with rigid symbols.
It is worth mentioning that the present developments can be cast in the framework of stratified institutions following the ideas presented in \cite{gai-godel}.
\begin{framework}
The results in this paper will be developed in a fragment $\mathcal{L}$ of $\HDFOLR$ that is semantically closed under negation and retrieve.
\footnote{$\L$ is semantically closed under negation whenever for all $\L$-sentences $\gamma$ there exists another $\L$-sentence $\varphi$ such that we have: $(W,M)\models^w \varphi $ iff $(W,M)\not \models^w\gamma$ for all Kripke structures $(W,M)$ and all possible worlds $w\in|W|$.
When there is no danger of confusion, we denote $\varphi$ by $\neg\gamma$.
Similarly, one can define the semantic closer of $\L$ under any sentence building operator.} 
We make the following notational conventions:
\begin{itemize}
\item We let $\Sen_0^\L$ to denote the subfunctor of $\Sen^\L$ which maps each signature $\Delta$ to the set of extended atomic sentences of $\L$ over the signature $\Delta$. 
This means that $\Sen_0^\L(\Delta)=\Sen^\HDFOLR(\Delta)\cap \Sen^\HDFOLR_0(\Delta)$ for all signatures $\Delta$.

\item We let $\Sen_b^\L$ to denote the subfunctor of $\Sen^\L$ which maps each signature $\Delta$ to the set of basic sentences of $\L$ over the signature $\Delta$.
This means that $\Sen_b^\L(\Delta)=\Sen^\HDFOLR(\Delta)\cap \Sen^\HDFOLR_b(\Delta)$ for all signatures $\Delta$.
\end{itemize} 
Since $\L$ is the logic in which we develop our results, we drop the superscript $\L$ from the notations $\Sen^\L$, $\Sen_0^\L$ and $\Sen_b^\L$ if there is no danger of confusion.
\end{framework}
Examples of fragments can be found in Section~\ref{sec:frag}.
\begin{definition}[Forcing property] \label{def:forcing}
Given a signature $\Delta$, a forcing property over $\Delta$ is a triple $\mathbb{P}=\pos{P,\leq,f}$ such that:
 \begin{enumerate}
  \item  \label{def:forcing-1} $(P,\leq)$ is a partially ordered set with a least element $0$. 
  
  The elements of $p$ are traditionally called \emph{conditions}.
  
 \item  \label{def:forcing-2} $f\colon P\to \P(\Sen_b(\Delta))$ is a function,

 \item  \label{def:forcing-3} if $p\leq q$ then $f(p)\subseteq f(q)$, and

 \item  \label{def:forcing-4} if $f(p)\models \at{k} \gamma$ then $\at{k} \gamma\in f(q)$ for some $q\geq p$,
\end{enumerate}
where $p\in P$, $q\in P$, $k\in F^\nom$ and $\gamma\in\Sen_0(\Delta)$.
\end{definition}
As for ordinary first-order logics, a forcing property generates a forcing relation on the set of all sentences.

\begin{definition} [Forcing relation]  \label{def:forcing-relation}
Let $\mathbb{P}=\pos{P,\leq,f }$ be a forcing property over $\Delta$. 

The family of relations $\Vdash=\{\Vdash^k\}_{k\in F^\nom}$, 
where $\Vdash^k\subseteq P\times \Sen(\Delta)$, is inductively defined as follows:
\begin{enumerate}
\item \label{fr1} \emph{For $\gamma$ extended atomic:} $p\Vdash^k \gamma$ if $\at{k}\gamma\in f(p)$.
 
\item \label{fr2} \emph{For $\pos{\act_1\comp\act_2}k''$:}
 $p\Vdash^k \pos{\act_1\comp \act_2}k''$ if $p\Vdash^k \pos{\act_1}k'$ and $p\Vdash^{k'} \pos{\act_2}k''$ for some $k'\in F^\nom$.
 
\item \label{fr3} \emph{For $\pos{\act_1\cup\act_2}k''$:} 
  $p\Vdash^k \pos{\act_1\cup\act_2}k''$ if $p\Vdash^k \pos{\act_1}k''$ or $p\Vdash^k \pos{\act_2}k''$.
 
\item \label{fr4} \emph{For $\pos{\act^*} k''$:} 
  $p\Vdash^k\pos{\act^*}k''$ if $p\Vdash^k \pos{\act^n}k''$ for some $n\in \N$.

\item \label{fr8} \emph{For $\pos{\act}\gamma$ with $\gamma\not\in F^\nom$:} $p\Vdash^k\pos{\act}\gamma$ if $p\Vdash^k \pos{\act}k'$ and $p\Vdash^{k'} \gamma$ for some nominal $k'\in F^\nom$.

\item \label{fr5} \emph{For $\neg\gamma$:} $p\Vdash^k\neg \gamma$ if there is no $q\geq p$ such that $q\Vdash^k \gamma$.

 \item \label{fr6} \emph{For $\vee \Gamma$:}  $p\Vdash^k \vee \Gamma$ if $p\Vdash^k \gamma$ for some $\gamma\in\Gamma$.

 \item \label{fr7} \emph{For $\at{k'} \gamma$:} $p\Vdash^k \at{k'}\gamma$ if $p\Vdash^{k'} \gamma$.

 \item \label{fr9} \emph{For $\store{z} \gamma$:} $p\Vdash^k \store{z}\gamma$ if $p\Vdash^k \gamma(z\leftarrow k)$.

 \item \label{fr10} \emph{For $\Exists{X}\gamma$:} $p\Vdash^k \Exists{X}\gamma$ if $p\Vdash^k \theta(\gamma)$ for some substitution  $\theta\colon X\to \emptyset$ over $\Delta$.
\end{enumerate}
\end{definition}

The forcing relation defined in the present contribution consists of the forcing relation defined in \cite{gai-godel} plus the items \ref{fr2}---\ref{fr4} of Definition~\ref{def:forcing-relation}.
The notation $p\Vdash^k \gamma$ is read \textit{$p$ forces $\gamma$ at $k$}.
\begin{remark}
Notice that Definition~\ref{def:forcing-relation} does not rely on the fact that $\L$ is closed under disjunction or quantifiers.
For example, the last item from Definition~\ref{def:forcing-relation} should be interpreted as follows: if $\Exists{X}\gamma$ is a sentence in $\L$ and $p\Vdash^k\theta(\gamma)$ for some substitution $\theta:X\to\emptyset$ over $\Delta$ then $p\Vdash^k\Exists{X}\gamma$.
\end{remark}
In regard to the satisfaction relation, one may consider a global forcing relation: $p\Vdash \gamma$ iff $p\Vdash^k\gamma$ for all nominals $k$. 
This remark establishes a connection between the results in the present contribution and the results in \cite{gai-com} and \cite{gai-ott}, where there exists only a global forcing relation.
  
\begin{lemma} \label{lemma:forcing-property}
  Let $\mathbb{P}=\langle P,\leq,f \rangle$ be a forcing property as in Definition~\ref{def:forcing}. We have:
 \begin{enumerate}
  \item \label{p1} $p\Vdash^k \neg\neg \gamma$ iff for each $q\geq p$ there is $r\geq q$ such that $r\Vdash^k \gamma$.

  \item \label{p2} If $q\geq p$ and $p\Vdash^k \gamma$ then $q\Vdash^k \gamma$.

  \item \label{p3} If $p\Vdash^k \gamma$ then $p\Vdash^k \neg\neg \gamma$.

  \item \label{p4} We cannot have both $p\Vdash^k \gamma$ and $p\Vdash^k \neg \gamma$.
 \end{enumerate}
\end{lemma}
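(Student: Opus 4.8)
The plan is to prove the four items by induction on the structure of the sentence $\gamma$, handling them more or less in the order listed, since later items lean on the earlier ones. For item~\ref{p2} (monotonicity of forcing), I would do a straightforward structural induction on $\gamma$: for extended atomic $\gamma$ the claim is immediate from clause~\ref{def:forcing-3} of Definition~\ref{def:forcing} ($p \le q$ implies $f(p) \subseteq f(q)$); for the action cases \ref{fr2}--\ref{fr4} and~\ref{fr8}, for retrieve~\ref{fr7}, for disjunction~\ref{fr6}, for store~\ref{fr9} and for existential quantification~\ref{fr10} the witnesses ($k'$, the disjunct, the substitution $\theta$) transfer verbatim and one applies the induction hypothesis to the immediate subsentence; the only genuinely different case is negation~\ref{fr5}, where $p \Vdash^k \neg\delta$ says no $r \ge p$ forces $\delta$ at $k$, and since any $r \ge q$ also satisfies $r \ge p$ by transitivity of $\le$, the same holds with $p$ replaced by $q$. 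Note this negation case uses transitivity of the partial order but \emph{not} the induction hypothesis, which is why monotonicity goes through cleanly for all sentences of $\L$ even though $\L$ need not be closed under all connectives.

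Item~\ref{p1} is essentially unfolding the definition: $p \Vdash^k \neg\neg\gamma$ means (clause~\ref{fr5}) that there is no $q \ge p$ with $q \Vdash^k \neg\gamma$; and $q \Vdash^k \neg\gamma$ means there is no $r \ge q$ with $r \Vdash^k \gamma$. So $p \Vdash^k \neg\neg\gamma$ iff for every $q \ge p$ it is \emph{not} the case that ``no $r \ge q$ forces $\gamma$ at $k$'', i.e.\ iff for every $q \ge p$ there is $r \ge q$ with $r \Vdash^k \gamma$. This is a pure propositional manipulation of nested negated quantifiers and needs no induction. Item~\ref{p3} then follows from~\ref{p1} together with~\ref{p2}: given $p \Vdash^k \gamma$, take any $q \ge p$; by monotonicity~\ref{p2} we get $q \Vdash^k \gamma$, so $r = q$ itself witnesses the condition in~\ref{p1}, hence $p \Vdash^k \neg\neg\gamma$. (This is where I would be careful that $\neg\neg\gamma$ is actually a sentence of $\L$ when $\gamma$ is, which is guaranteed by the Framework's assumption that $\L$ is semantically closed under negation, so the forcing clauses for the two negations are available.)

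Item~\ref{p4} is the consistency statement. Suppose for contradiction that $p \Vdash^k \gamma$ and $p \Vdash^k \neg\gamma$. From $p \Vdash^k \neg\gamma$ (clause~\ref{fr5}) there is no $q \ge p$ with $q \Vdash^k \gamma$; but $p \ge p$ by reflexivity and $p \Vdash^k \gamma$ by hypothesis, a contradiction. So this one is immediate and does not even need~\ref{p2}. The likely main obstacle in writing this up cleanly is not any single step but making the induction in~\ref{p2} watertight across the action operators and the substitution/store cases: one must check that the side conditions in clauses~\ref{fr2}, \ref{fr8}, \ref{fr9}, \ref{fr10} mention only immediate subsentences (or substitution instances thereof), so that the induction hypothesis genuinely applies; for~\ref{fr10} in particular one should note that $\theta(\gamma)$ is structurally simpler than $\Exists{X}\gamma$ in the relevant well-founded ordering, and similarly $\gamma(z \leftarrow k)$ for~\ref{fr9}, so these recursive appeals are legitimate. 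Everything else is bookkeeping with the partial order axioms (reflexivity and transitivity of $\le$) and the monotonicity clause~\ref{def:forcing-3} of the forcing property.
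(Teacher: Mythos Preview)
Your proposal is correct and follows essentially the same approach as the paper: item~\ref{p2} by structural induction on $\gamma$ (with the negation case handled via transitivity of $\leq$ rather than the induction hypothesis), item~\ref{p1} by unfolding the definition, item~\ref{p3} from items~\ref{p1} and~\ref{p2}, and item~\ref{p4} from reflexivity of $\leq$. Your extra remark about the well-foundedness of the recursion in the store and existential cases is a point the paper glosses over but which is indeed needed for the induction in~\ref{p2} to be sound.
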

\begin{proof}
Notice that the statements~\ref{p1} and \ref{p3} are well-defined as $\L$ is semantically closed under negation.
 \begin{enumerate}
 \item $p\Vdash^k \neg\neg \gamma$ iff for each $q\geq p$ we have $q\not\Vdash^k \neg \gamma$ iff 
 
 for each $q\geq p$ there is $r\geq q$ such that $r\Vdash^k \gamma$.
 
 \item By induction on the structure of sentences:
 
\begin{proofcases}[itemsep=1.5ex]
\item[For $\gamma$ extended atomic]
The conclusion follows easily from $f(p)\subseteq f(q)$.
\item [For $\pos{\act_1\comp \act_2}k''$]  
$p\Vdash^k \pos{\act_1\comp \act_2}k''$ iff 
$p\Vdash^k \pos{\act_1}k'$ and $p\Vdash^{k'} \pos{\act_2}k''$ for some $k'\in F^\nom$.
By the induction hypothesis,  $q\Vdash^k\pos{ \act_1}k'$ and $q\Vdash^{k'} \pos{\act_2}k''$.
Hence, $q\Vdash^k \pos{\act_1\comp \act_2}k''$.
\item [For $\pos{\act_1\cup \act_2}k''$] 
$p\Vdash^k\pos{\act_1\cup\act_2}k''$ iff $p\Vdash^k \pos{\act_1}k''$ or $p\Vdash^k \pos{\act_2}k''$.
By the induction hypothesis, $q\Vdash^k \pos{\act_1}k''$ or $q\Vdash^k \pos{\act_2}k''$.
Hence, $q\Vdash^k\pos{\act_1\cup\act_2}k''$.
\item [For $\pos{\act^*}k''$]
$p\Vdash^k \pos{\act^*}k''$ iff there exists $n\in \N$ such that $p\Vdash^k \act^n \pos{k''}$.
By the induction hypothesis, $q\Vdash^k \pos{\act^n}k''$.
Hence, $q\Vdash^k \pos{\act^*}k''$.
\item [For $\pos{\act}\gamma$ with $\gamma\not\in  F^\nom$]
$p\Vdash^k \pos{\act}\gamma$ iff
$p\Vdash^k \pos{\act}k'$ and $p\Vdash^{k'}\gamma$.
By the induction hypothesis, $q\Vdash^k \pos{\act}k'$ and $q\Vdash^{k'}\gamma$.
Hence, $q\Vdash^k \pos{\act}\gamma$.
\item[For $\at{k'}\gamma$]
We have $p\Vdash^k \at{k'}\gamma$ iff $p\Vdash^{k'} \gamma$. 
By induction hypothesis, $q\Vdash^{k'}\gamma$. 
Hence, $q\Vdash^k \at{k'}\gamma$.
\item[For $\neg \gamma$]
We have $p\Vdash^k \neg \gamma$. 
This means $r\not\Vdash^k \gamma$ for all $r\geq p$. 
In particular, $r\not\Vdash^k \gamma$ for all $r\geq q$.
Hence, $q\Vdash^k \neg \gamma$.
\item[For $\vee \Gamma$]
$p\Vdash^k \gamma$ for some $\gamma\in \Gamma$. 
By induction hypothesis, $q\Vdash^k \gamma$ which implies $q\Vdash^k \vee \Gamma$.
\item[For $\store{z}\gamma$]
We have $p\Vdash^k \store{z}\gamma$ iff $p\Vdash^k \gamma(z\leftarrow k)$. 
By the induction hypothesis, $q\Vdash^k \gamma(z\leftarrow k)$, which implies $q\Vdash^k \store{z}\gamma$. 
\item[For $\Exists{X} \gamma$]
Since $p\Vdash^k \Exists{X} \gamma$ then $p\Vdash^k \theta(\gamma)$ for some substitution  $\theta\colon X\to \emptyset$ over $\Delta$. 
By the induction hypothesis, $q\Vdash^k \theta(\gamma)$. 
Hence, $q\Vdash^k \Exists{X}\gamma$.
\end{proofcases}
 
 \item It follows from \ref{p1} and \ref{p2}.
 
 \item By the reflexivity of $(P,\leq)$.
 \end{enumerate}
\psqed\end{proof}

Lemma~\ref{lemma:forcing-property} is a generalization of \cite[Lemma 4.4]{gai-godel} from hybrid logics to hybrid dynamic logics.
In the present contribution, since the proof of the second statement is by induction, we need to consider possibility over structured actions.

\begin{definition} [Generic set \cite{gai-godel}] \label{def:generic-set}
 Let $\mathbb{P}=\langle P,\leq,f\rangle$ be a forcing property over a signature $\Delta$. 
 
 A subset $G\subseteq P$ is generic if it has the following properties:
 \begin{enumerate}
 
  \item $r\in G$ if $r\leq p$ and $p\in G$;
  
  \item there exists $r\in G$ such that $r\geq p$ and $r\geq q$, for all $p,q\in G$;
  
  \item there exists $r\in G$ such that $r\Vdash^k \gamma$ or $r\Vdash^k \neg \gamma$, for all $\Delta$-sentences $\at{k} \gamma$.
 
 \end{enumerate}
 We write $G\Vdash^k \gamma$ whenever $p\Vdash^k \gamma$ for some $p\in G$. 
\qed\end{definition}

Note that $G$ in Definition~\ref{def:generic-set} is well-defined, as $\L$ is semantically closed under negation.
The following lemma ensures the existence of generic sets. 
The result is based on the assumption that signatures consist of a countable number of symbols.

\begin{lemma}[Existence of generic sets \cite{gai-godel}]\label{lemma:gsl} 
\label{lemma:gs-exist}
Let $\mathbb{P}=\langle P,\leq,f\rangle$ be a forcing property over a signature~$\Delta$.
If $\Sen(\Delta)$ is countable then every $p$ belongs to a generic set.
\end{lemma}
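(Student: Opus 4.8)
This is the standard Rasiowa--Sikorski-style construction of a generic set by diagonalisation against a countable list of ``dense'' requirements, adapted to the hybrid setting where the forcing relation is indexed by nominals. First I would fix an enumeration of the set $\{@_k\gamma \mid k\in F^\nom,\ \gamma\in\Sen(\Delta)\}$, which is countable by hypothesis (there are countably many nominals and countably many sentences). Say this enumeration is $\psi_0,\psi_1,\psi_2,\dots$, where $\psi_i=@_{k_i}\gamma_i$. Starting from the given condition $p$, I would build an increasing chain $p=p_0\leq p_1\leq p_2\leq\cdots$ in $(P,\leq)$ such that for each $i$, either $p_{i+1}\Vdash^{k_i}\gamma_i$ or $p_{i+1}\Vdash^{k_i}\neg\gamma_i$. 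The generic set is then $G=\{r\in P \mid r\leq p_n \text{ for some }n\in\N\}$, the downward closure of the chain.

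\textbf{The key step: realising each requirement.} Given $p_i$, I need to find $p_{i+1}\geq p_i$ deciding $@_{k_i}\gamma_i$. Here is where Lemma~\ref{lemma:forcing-property}\ref{p1} does the work. Either there is some $q\geq p_i$ with $q\Vdash^{k_i}\gamma_i$ --- in which case set $p_{i+1}=q$ and we have forced $\gamma_i$ at $k_i$ --- or there is no such $q$, in which case by Definition~\ref{def:forcing-relation}\ref{fr5} we directly get $p_i\Vdash^{k_i}\neg\gamma_i$, so set $p_{i+1}=p_i$. In both cases $p_{i+1}\geq p_i$ and $p_{i+1}$ decides $\psi_i$. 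This is a genuine dichotomy requiring no choice beyond picking one witness $q$, so the chain is well-defined (at worst using dependent choice over $\N$, which is unproblematic).

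\textbf{Verifying genericity.} It remains to check that $G$ as defined satisfies the three clauses of Definition~\ref{def:generic-set}. Downward closure (clause 1) is immediate from the definition of $G$. For the directedness clause (clause 2): given $p,q\in G$, we have $p\leq p_m$ and $q\leq p_n$ for some $m,n$; taking $r=p_{\max(m,n)}$, the chain is increasing so $r\geq p_m\geq p$ (using transitivity of $\leq$ together with Lemma~\ref{lemma:forcing-property}\ref{p2} is not even needed here, just $\leq$-transitivity) and similarly $r\geq q$, and $r\in G$. For clause 3: given any $\Delta$-sentence $@_k\gamma$, it appears as some $\psi_i=@_{k_i}\gamma_i$ in the enumeration (with $k=k_i$, $\gamma=\gamma_i$), and by construction $p_{i+1}\Vdash^{k_i}\gamma_i$ or $p_{i+1}\Vdash^{k_i}\neg\gamma_i$, with $p_{i+1}\in G$; so $r=p_{i+1}$ works. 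Finally, $p\in G$ since $p=p_0\leq p_0$.

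\textbf{Main obstacle.} There is no deep obstacle --- the argument is routine once the enumeration is fixed. The only subtlety is making sure the ``decide $\psi_i$'' step is honestly a dichotomy: it is, because Definition~\ref{def:forcing-relation}\ref{fr5} defines $p\Vdash^k\neg\gamma$ precisely as the failure of any $q\geq p$ to force $\gamma$ at $k$, so the negative horn of the dichotomy is automatic and no appeal to semantic closure under negation is needed to *produce* the deciding condition (it is needed only for $\neg\gamma$ to be a sentence of $\L$, which the Framework guarantees). One should also note that the $p_{i+1}$ we obtain in the positive case forces $\gamma_i$ at $k_i$, which by Lemma~\ref{lemma:forcing-property}\ref{p2} is preserved upward along the rest of the chain, so the decisions made at stage $i$ are not undone later --- though strictly this monotonicity is not required for the bare genericity clauses as stated.
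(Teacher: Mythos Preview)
Your proof is correct and is exactly the standard Rasiowa--Sikorski diagonalisation one would expect. Note that the paper does not actually prove this lemma: it is stated with a citation to \cite{gai-godel} and no argument is given in the body of the paper. Your construction matches the classical argument that would appear in that reference, and all three genericity clauses are verified correctly; the dichotomy at each stage is indeed immediate from the definition of forcing negation.

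One minor simplification: you justify the countability of $\{@_k\gamma\mid k\in F^\nom,\ \gamma\in\Sen(\Delta)\}$ by counting nominals and sentences separately, but since $\L$ is closed under retrieve, each $@_k\gamma$ is already an element of $\Sen(\Delta)$, so this set is just a subset of the countable set $\Sen(\Delta)$.
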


For the semantic forcing property defined in the next section it is possible to construct generic sets even if the underlying signature consists of an uncountable number of symbols.
Notice that the definition of forcing relation and the definition of generic set are based on syntactic compounds. 
The following definition gives the semantics/meaning to these concepts.

\begin{definition} [Generic model \cite{gai-godel}]\label{def:generic-model}
Let $\mathbb{P}=\langle P,\leq,f\rangle$ be a forcing property over a signature $\Delta$.
\begin{itemize}
\item $(W,M)$ is a model for a generic set $G\subseteq P$ when $(W,M)\models \at{k} \gamma \mbox{ iff } G\Vdash^k \gamma$, for all $\Delta$-sentences $\at{k} \gamma$.

\item $(W,M)$ is a model for $p\in P$ if there is a generic set $G\subseteq P$ such that $p\in G$ and $(W,M)$ is a model for $G$.
\end{itemize}
\end{definition}

The models $(W,M)$ from Definition~\ref{def:generic-model} are called, traditionally, \emph{generic models}. 
The following result ensures the existence of generic models.

\begin{theorem}[Generic Model Theorem]\label{GMT}
Let $\mathbb{P}=\langle P,\leq,f\rangle$ be a forcing property over $\Delta$.
Then each generic set $G$ of $\mathbb{P}$ has a generic Kripke structure $(W,M)$. 
If in addition $\Delta$ is non-void, $(W,M)$ is reachable.
\end{theorem}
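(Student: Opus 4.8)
The plan is to build the generic Kripke structure directly from the syntactic data of the generic set $G$, using the machinery of basic (in fact, epi-basic) sentences from Proposition~\ref{prop:loc-basic}. First I would collect the relevant atomic information forced by $G$: define $B_G \coloneqq \{\at{k}\gamma \mid \gamma\in\Sen_0(\Delta),\ G\Vdash^k\gamma\}$, the set of all extended atomic sentences (preceded by a retrieve) that are forced somewhere along $G$. Since $B_G\subseteq\Sen_b(\Delta)$, Proposition~\ref{prop:loc-basic} tells us that $B_G$ is locally basic, so it has a basic model; and when $\Delta$ is non-void, $B_G$ is locally epi-basic and its basic model $(W^{B_G},M^{B_G})$ is reachable. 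I would take $(W,M)\coloneqq(W^{B_G},M^{B_G})$ as the candidate generic model. The reachability clause of the theorem is then immediate from the ``moreover'' part of Proposition~\ref{prop:loc-basic}, so the whole content is in verifying the biconditional $(W,M)\models\at{k}\gamma \iff G\Vdash^k\gamma$ for all $\Delta$-sentences $\at{k}\gamma$.

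The core of the argument is an induction on the structure of $\gamma$ establishing, more generally, that $(W,M)\models^{W_k}\gamma \iff G\Vdash^k\gamma$ for every nominal $k$ and every sentence $\gamma$ (this is equivalent to the displayed statement via the semantics of $\at{k}$ and the fact that $W_k$ is the denotation of $k$ in the reachable term-like model). The atomic case is where the basic-model construction pays off: $(W,M)\models^{W_k}\rho$ should hold iff $\at{k}\rho\in B_G$ iff $G\Vdash^k\rho$, using that $(W^{B_G},M^{B_G})$ satisfies exactly the consequences of $B_G$ among basic sentences together with the saturation clause~\ref{def:forcing-4} of a forcing property (if $f(p)\models\at{k}\gamma$ then $\at{k}\gamma\in f(q)$ for some $q\ge p$) to move from ``$G$ semantically forces'' to ``$G$ syntactically forces.'' The Boolean cases use the generic-set properties from Definition~\ref{def:generic-set}: for $\neg\gamma$, one uses directedness of $G$ together with clause~\ref{p4} of Lemma~\ref{lemma:forcing-property} (no condition forces both $\gamma$ and $\neg\gamma$) and the completeness clause~(3) of genericity (some $r\in G$ decides $\at{k}\gamma$), so that $(W,M)\not\models^{W_k}\gamma$ forces $G\Vdash^k\neg\gamma$; disjunction and the retrieve operator $\at{k'}$ are routine from the induction hypothesis. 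For $\store{z}\gamma$, clause~\ref{fr9} matches the semantics because in the reachable model the world named by the fresh nominal $z\leftarrow k$ is exactly $W_k$. For $\Exists{X}\gamma$, clause~\ref{fr10} matches because $(W,M)$ is reachable: every expansion interpreting the new constants is, by Proposition~\ref{reach-HFOLR}, induced by a substitution $\theta\colon X\to\emptyset$, so ``there is an expansion satisfying $\gamma$'' coincides with ``there is $\theta$ with $G\Vdash^k\theta(\gamma)$.''

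The genuinely new cases, relative to \cite{gai-godel}, are the ones for possibility over structured actions, handled by a secondary induction on the structure of the action $\act$ running underneath the main induction. For $\pos{\lambda}k''$ with $\lambda\in P^\nom$ this is an atomic (nominal-relation) case already covered. For $\pos{\act_1\comp\act_2}k''$, $\pos{\act_1\cup\act_2}k''$, and $\pos{\act^*}k''$ one checks that clauses~\ref{fr2}--\ref{fr4} of the forcing relation mirror, respectively, relational composition, union, and reflexive-transitive closure of the accessibility relations in $(W,M)$ — here reachability of the frame (states are denotations of nominals) is exactly what lets us insert an intermediate \emph{nominal} $k'$ witnessing a two-step path, and for $\act^*$ the fragment-closure condition that $\pos{\act^*}\gamma\in\Sen^\L(\Delta)$ implies $\pos{\act^n}\gamma\in\Sen^\L(\Delta)$ for some $n$ keeps everything inside $\L$. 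Finally $\pos{\act}\gamma$ with $\gamma\notin F^\nom$ reduces, via clause~\ref{fr8}, to the nominal case plus the main induction hypothesis on $\gamma$. I expect the main obstacle to be precisely the iteration case $\pos{\act^*}k''$: the forward direction (semantics to forcing) requires extracting a finite path in $(W_\act)^*$ and then showing that each finite approximation $\pos{\act^n}k''$ is forced, which needs a careful interleaving of the two inductions and an appeal to the monotonicity Lemma~\ref{lemma:forcing-property}(\ref{p2}) to amalgamate the finitely many conditions witnessing the intermediate steps into a single condition in the directed set $G$.
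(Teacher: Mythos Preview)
Your proposal follows the paper's proof exactly: collect the forced basic sentences into $B$, take its basic reachable model via Proposition~\ref{prop:loc-basic}, and run the induction on sentence structure, with the action cases handled just as you describe.

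One genuine gap in your atomic-case sketch: the saturation clause~(\ref{def:forcing-4}) alone is not enough to pass from ``$B_G\models\at{k}\rho$'' to ``$G\Vdash^k\rho$,'' because clause~(\ref{def:forcing-4}) requires $f(p)\models\at{k}\rho$ for a \emph{single} condition $p$, whereas $B_G$ may be spread over infinitely many conditions of $G$. The paper fills this by invoking compactness of the basic fragment $\HDFOLR_b$ to extract a finite $B_f\subseteq B_G$ with $B_f\models\at{k}\rho$, then uses directedness of $G$ to find one $p\in G$ with $B_f\subseteq f(p)$; moreover the $q\ge p$ produced by clause~(\ref{def:forcing-4}) need not lie in $G$, so the argument is closed by contradiction via the decidedness clause of genericity and Lemma~\ref{lemma:forcing-property}(\ref{p4}). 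Your intuition about where the difficulty lies is inverted: the iteration case $\pos{\act^*}k''$ is four routine lines in the paper (just unfold the semantics and the forcing clause), while the atomic case is where the real subtlety sits.
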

\begin{proof} 
 Let $G$ be a generic set.
 We define $T=\{\at{k}\gamma\in\Sen(\Delta)  \mid G\Vdash^k \gamma \}$ and $B=T\cap\Sen_b (\Delta)$.
 By Proposition~\ref{prop:loc-basic}, $B$ is basic, and there exists a basic model $(W^B,M^B)$ for $B$ that is reachable.
 We show that $(W^B,M^B)\models \at{k} \gamma$ iff $G\Vdash^k \gamma$, for all $\Delta$-sentences $\at{k} \gamma$.

\begin{proofcases}[itemsep=1.5ex]
\item[For $\gamma$ extended atomic]
Assume that $(W^B,M^B)\models \at{k} \gamma$. 

\begin{proofsteps}{28em}
  \label{ps:gmt-basic-1}$B$ and $\{\at{k} \gamma\}$ are basic & 
  by  Proposition~\ref{prop:loc-basic}\\
  
  there exists an arrow $(W^{\at{k}\gamma}, M^{\{\at{k} \gamma\}})\to (W^B,M^B)$ & 
  since $\{\at{k} \gamma \}$ is basic and $(W^B,M^B)\models \at{k} \gamma$\\
  
  $B\models \at{k} \gamma$ & 
  since both $B$ and $\{\at{k} \gamma \}$ are basic \\
  
   there exists $B_f\subseteq B$ finite such that $B_f\models \at{k} \gamma$ & 
  since $\HDFOLR_b$ is compact\\
  
  $B_f=\{\at{k_1}\gamma_1,\dots, @_{k_n}\gamma_n\}$ for some $\gamma_i\in \Sen_0 (\Delta)$ and some $k_i\in F^\nom$ &
  by the definition of $B$\\
  
  for all $i\in \{1,\ldots,n\}$, there exists $p_i\in G$ such that $p_i\Vdash^{k_i}\gamma_i$ & 
  by the definition of $B$ \\
  
  there exists $p\in G$ such that $p\geq p_i$ for all $i\in \{1,\ldots,n\}$ &
  since $G$ is generic\\
  
  $B_f\subseteq f(p)$ & 
  since $B_f\subseteq \Sen_b(\Delta)$\\
  
  \label{gmt-b-9} $q\Vdash^k \gamma$ or $q\Vdash^k\neg\gamma$ for some $q\in G$  & 
  since $G$ is generic\\
  
  \label{gmt-b-10} suppose towards a contradiction that $q\Vdash^k \neg\gamma$ & 
  \substeps{9}
  $r\geq p$ and $r\geq q$ for some $r\in G$  & 
  since $G$ is generic \\
  
  $r\Vdash ^k \neg\gamma$ & 
  by Lemma~\ref{lemma:forcing-property}~(\ref{p2}), 
  since $r\geq q$ and $q\Vdash^k \neg\gamma$ \\
  
  $B_f\subseteq f(r)$ & 
  since $B_f\subseteq f(p)$  and  $r\geq p$ \\
  
  there exists $s\geq r$ such that $\at{k} \gamma\in f(s)$ & 
  since $B_f\models \at{k} \gamma$, we have $f(r)\models \at{k} \gamma$ \\
  $s\Vdash^k \gamma$ & 
  by Definition~\ref{def:forcing-relation} \\
  $s\Vdash^k \neg\gamma$ & 
  by Lemma~\ref{lemma:forcing-property}~(\ref{p2}) \\
  contradiction & 
  by Lemma~\ref{lemma:forcing-property}~(\ref{p4}) 
  \endsubsteps
  \label{ps:gmt-b-10} $q\Vdash^k \gamma$ & 
  by \ref{gmt-b-9} and \ref{gmt-b-10}\\
  $G\Vdash^k \gamma$ & since $q\in G$
\end{proofsteps}
If $G\Vdash^k \gamma$ then by the definition of $B$, we have $\at{k} \gamma\in B$, which implies $B\models \at{k} \gamma$; 
hence, $(W^B,M^B)\models \at{k} \gamma$.
 \item [For $\pos{\act_1\comp \act_2}k''$]  
  Assume that $(W^B,M^B)\models \at{k} \pos{\act_1\comp \act_2}k''$.
  \begin{proofsteps}{24em}
   $(W^B_k,W^B_{k''})\in W^B_{(\act_1\comp\act_2)}$ & by definition \\
   
   \label{ps:gm-act2} $(W^B_k,w)\in W^B_{\act_1}$ and  $(w,W^B_{k''})\in W^B_{\act_2}$ for some $w\in|W^B|$ & 
   since $\act_1\comp\act_2$ is the composition of the relations $\act_1$ and $\act_2$\\
   
   \label{ps:gm-act3} $w=W^B_{k'}$ for some nominal $k'\in F^\nom$ & 
   since $(W^B,M^B)$ is reachable \\
   
   $(W^B_k,W^B_{k'})\in W^B_{\act_1}$ and  $(W^B_{k'},W^B_{k''})\in W^B_{\act_2}$ &
   by  \ref{ps:gm-act2} and \ref{ps:gm-act3}\\

   $G\Vdash^k \pos{\act_1}k'$ and $G\Vdash^{k'}\pos{\act_2}k''$ &
   by the induction hypothesis \\
   
   \label{ps:gm-act6} $p\Vdash^k \pos{\act_1}k'$ for some $p\in G$ and \newline 
   $q\Vdash^{k'}\pos{\act_2}k''$ for some $q\in G$ & 
   by Definition~\ref{def:generic-set} \\
   
   \label{ps:gm-act7} $r\geq p$ and $r\geq q$ for some $r\in G$ & 
   since $G$ is generic\\
   
   $r\Vdash^k \pos{\act_1}k'$ and $r\Vdash^{k'}\pos{\act_2}k''$ &
   by Lemma~\ref{lemma:forcing-property} (\ref{p2}) applied to \ref{ps:gm-act6} and \ref{ps:gm-act7}\\
    
   $r\Vdash^k \pos{\act_1\comp \act_2}k''$ & 
   by Definition~\ref{def:forcing-relation} \\
   
   $G\Vdash^k \pos{\act_1\comp \act_2}k''$ &
   by Definition~\ref{def:generic-set}
  \end{proofsteps}
  
  Assume that $G\Vdash^k\pos{\act_1\comp \act_2}k''$.
  
  \begin{proofsteps}{24em}
   $p\Vdash^k\pos{\act_1\comp \act_2}k''$ for some $p\in G$ & \\
  
   $p\Vdash^k \pos{\act_1}k'$ and $p\Vdash^{k'} \pos{\act_2}k''$  for some $k'\in F^\nom$ & 
   by Definition~\ref{def:forcing-relation}\\
   
   $(W^B,M^B)\models\at{k}\pos{\act_1}k'$ and $(W^B,M^B)\models\at{k'}\pos{\act_2}k''$ &
   by the induction hypothesis \\
   
   $(W^B,M^B)\models\at{k}\pos{\act_1\comp \act_2}k''$ & by the semantics of $\act_1\comp\act_2$
  \end{proofsteps}
 \item [For $\pos{\act_1\cup \act_2}k''$] The following are equivalent:
  \begin{proofsteps}{24em}
   
   $G\Vdash^k \pos{\act_1\cup\act_2}k''$ & \\
   
   $p\Vdash^k\pos{\act_1\cup\act_2}k''$ for some $p\in G$ &
   by Definition~\ref{def:forcing-relation} \\
   
   $p\Vdash^k\pos{\act_1}k''$ or $p\Vdash^k\pos{\act_2}k''$ & 
   by Definition~\ref{def:forcing-relation} \\
   
   $G\Vdash^k\pos{\act_1}k''$ or $G\Vdash^k\pos{\act_2}k''$ & 
   by Definition~\ref{def:forcing-relation} \\

   $(W^B,M^B)\models \at{k}\pos{\act_1}k''$ or $(W^B,M^B)\models \at{k}\pos{\act_2}k''$ &  
   by the induction hypothesis \\
   
   $(W^B,M^B)\models \at{k}\pos{\act_1\cup\act_2}k''$ & by the semantics of $\act_1\cup\act_2$
  \end{proofsteps} 
\item [For $\pos{\act^*}k''$]  
 The following are equivalent:
 \begin{proofsteps}{24em}
  $(W^B,M^B)\models\at{k}\pos{\act^*}k''$ & \\
  
  $(W^B,M^B)\models\at{k}\pos{\act^n}k''$ for some $n\in \N$ & by the semantics of $\act^*$\\
  
  $G\Vdash^k\pos{\act^n}k''$ for some $n\in \N$ & by the induction hypothesis \\
  
  $G\Vdash^k\pos{\act^*}k''$ & by Definition~\ref{def:forcing-relation}
 \end{proofsteps} 
 \item[For $\pos{\act}\gamma$ with $\gamma\not\in F^\nom$]
 The following are equivalent:
 \begin{proofsteps}{25em} 
  $(W^B,M^B)\models \at{k}\pos{\act}\gamma $ & \\
    
  $(W^B,M^B) \models^{w_1} \gamma$ for some $w_1\in|W^B|$ such that $(W^B_k,w_1)\in W^B_\act$ & 
  by the definition of $\models$\\
  
  $(W^B,M^B)\models \at{k}\pos{\act}k_1$ and $(W^B,M^B)\models \at{k_1}\gamma$ \newline 
  for some $k_1\in\F^\nom$ such that $W^B_{k_1}=w_1$ & 
  by Proposition~\ref{reach-HFOLR}, since $(W^B,M^B)$ is reachable \\ 
  
  $G\Vdash^k \pos{\act}k_1$ and $G\Vdash^{k_1} \gamma$ for some $k_1\in F^\nom$  &
  by the induction hypothesis \\
  
  $G\Vdash^k\pos{\act}\gamma$ & since $G$ is generic
 \end{proofsteps}
\item[For $\neg \gamma$]
The following are equivalent:
\begin{proofsteps}{24em}
  \label{ps: gmt-n-1} $(W^B,M^B)\models \at{k} \neg\gamma$ & 
  \\
  \label{gmt-n-2} $(W^B,M^B)\not \models \at{k}\gamma$ & 
  by the semantics of negation\\
  \label{gmt-n-3} $G\not \Vdash^k\gamma$ & 
  by the induction hypothesis \\
  \label{gmt-n-4} $p\not \Vdash^k\gamma$ for all $p\in G$ & 
  by the definition of $\Vdash$\\
  \label{gmt-n-5} $p\Vdash^k \neg\gamma$ for some $p\in G$ & 
  since $G$ is generic\\
  \label{gmt-n-6} $G\Vdash^k \neg\gamma$
\end{proofsteps}

\item[For $\vee \Gamma$]
The following are equivalent: 
\begin{proofsteps}{24em}
  $(W^B,M^B)\models \at{k} \vee \Gamma$ & 
  \\
  $(W^B,M^B)\models \at{k} \gamma$ for some $\gamma\in \Gamma$ &
  by the semantics of disjunction\\
  $G\Vdash^k \gamma$ for some $\gamma\in \Gamma$ & 
  by the induction hypothesis\\
  $G\Vdash^k \vee \Gamma$ & 
  by the definition of $\Vdash$
\end{proofsteps}
\item[For $\Exists{X}\gamma$]
Let $w=W^B_k$.
The following are equivalent:

\begin{proofsteps}{28em}
   \label{gmt-q-1} $(W^B,M^B)\models \at{k} \Exists{X}\gamma$ &  \\
   \label{gmt-q-2} $(W',M')\models^w \gamma$ for some expansion $(W',M')$ of $(W^B,M^B)$ to $\Delta[X]$ & 
   by the definition of $\models$\\
   
   \label{gmt-q-3} $(W^B,M^B)\models^w \theta(\gamma)$ for some substitution 
   $\theta\colon X\to\emptyset$ over $\Delta$ such that $(W^B,M^B)\red_\theta=(W',M')$ & since $(W^B,M^B)$ is reachable  \\
   
   \label{gmt-q-4} $G\Vdash^k\theta(\gamma)$ for some substitution $\theta\colon X\to\emptyset$ over $\Delta$ & 
   by the induction hypothesis\\
   \label{gmt-q-5} $G\Vdash^k\Exists{X}\gamma$ & 
   by the definition of $\Vdash$
\end{proofsteps}
\item[For $\store{z}\gamma$]
 This case is straightforward since $\at{k}\store{z}\gamma$ is semantically equivalent to $\at{k}\gamma(z\leftarrow k)$.
\item[For $\at{k'}\gamma$]
This case is straightforward since $\at{k} \at{k'}\gamma$ is semantically equivalent to $\at{k'} \gamma$.
\end{proofcases}
\psqed\end{proof}

Theorem~\ref{GMT} is a generalization of Generic Model Theorem for hybrid logics from \cite{gai-godel}.
The new cases from the present contribution correspond to structured actions, which include second, third and fourth cases.
 
\section{Semantic forcing property}\label{5}

We study a semantic forcing property, which will be used to prove the Omitting Types Theorem for a fragment $\L$ of $\HDFOLR$ semantically closed under negation and retrieve.

\begin{framework} \label{frame:context}
In this section, we arbitrarily fix
\begin{enumerate}
\item a signature $\Delta=(\Sigma^\nom,\Sigma^\rigid\subseteq\Sigma)$ of $\L$, 
 
\item a class $\mathcal{K}$  of Kripke structures over the signature $\Delta$, and

\item a sorted set $C=\{C_s\}_{s\in S^\ext}$ of new rigid constants for $\Delta$ such that $\card(C_s)=\alpha$ for all sorts $s\in S^\ext$, where $S^\ext=S^\rigid\cup\{\any\}$ is the extended set of rigid sorts and $\any$ is the sort of nominals.
\end{enumerate}
We let $\alpha$ denote the power of $\Delta$.
\end{framework}
If the set of sorts in $\Sigma$ is empty then $C$ consists only of nominals.

\begin{definition}\label{def:semantic-forcing}
The semantic forcing property $\mathbb{P}=(P,\leq,f)$ over the signature $\Delta[C]$ relative to the class of Kripke structures $\mathcal{K}$ is defined as follows:
\begin{enumerate}
\item $P=\{p\subseteq \Sen(\Delta[C])\mid \card(p)<\alpha \text{ and } (W,M)\models p \text{ for some }(W,M) \in|\Mod(\Delta[C])| \text{ s.t. }(W,M)\red_\Delta\in\mathcal{K}\}$, 

\item $\leq$ is the inclusion relation, and 

\item $f(p)=p\cap\Sen_b(\Delta[C])$ for all $p\in P$.
\end{enumerate}
\end{definition}

\begin{lemma} \label{lemma:semantic forcing}
 $\mathbb{P}=\langle P,\leq,f\rangle$ described in Definition~\ref{def:semantic-forcing} is a forcing property.
\end{lemma}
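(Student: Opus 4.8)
The plan is to verify the four defining conditions of a forcing property (Definition~\ref{def:forcing}) for the triple $\mathbb{P}=\langle P,\leq,f\rangle$ from Definition~\ref{def:semantic-forcing}. First I would check condition~\ref{def:forcing-1}: the relation $\leq$ is set inclusion, so $(P,\leq)$ is a partial order, and the empty set $\emptyset$ is the least element, provided $\emptyset\in P$. To see $\emptyset\in P$ we need a Kripke structure $(W,M)$ with $(W,M)\red_\Delta\in\mathcal{K}$; this requires $\mathcal{K}$ to be non-empty, which I would either assume (or note that the case $\mathcal{K}=\emptyset$ is degenerate), and $\card(\emptyset)=0<\alpha$ since $\alpha$, the power of $\Delta$, is infinite. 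Condition~\ref{def:forcing-2} is immediate: $f(p)=p\cap\Sen_b(\Delta[C])$ is by construction a subset of $\Sen_b(\Delta[C])$, hence an element of $\mathcal{P}(\Sen_b(\Delta[C]))$. Condition~\ref{def:forcing-3} is also immediate: if $p\leq q$, i.e.\ $p\subseteq q$, then $p\cap\Sen_b(\Delta[C])\subseteq q\cap\Sen_b(\Delta[C])$, that is $f(p)\subseteq f(q)$.

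The substantive part is condition~\ref{def:forcing-4}: if $f(p)\models\at{k}\gamma$ with $\gamma\in\Sen_0(\Delta[C])$ and $k$ a nominal, then $\at{k}\gamma\in f(q)$ for some $q\geq p$. The natural candidate is $q=p\cup\{\at{k}\gamma\}$. I must check two things: that $q\in P$, and that $\at{k}\gamma\in f(q)$. For the cardinality part of $q\in P$, $\card(q)\leq\card(p)+1<\alpha$ since $\alpha$ is infinite and $\card(p)<\alpha$. For the satisfiability part: $p\in P$ means there is $(W,M)$ with $(W,M)\models p$ and $(W,M)\red_\Delta\in\mathcal{K}$; since $f(p)\subseteq p$ we have $(W,M)\models f(p)$, and since $f(p)\models\at{k}\gamma$ (and $\at{k}\gamma$ is rigid, hence its truth does not depend on the world — by Lemma~\ref{lemma:rigidification} it is semantically equivalent to $\At{k}\gamma$), we get $(W,M)\models\at{k}\gamma$, so $(W,M)\models q$. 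Hence $q\in P$. Finally $\at{k}\gamma\in f(q)$ because $\at{k}\gamma\in q$ and $\at{k}\gamma\in\Sen_b(\Delta[C])$: indeed $\gamma$ is an extended atomic sentence, so $\at{k}\gamma$ is an extended atomic sentence with at most one application of retrieve, which is exactly the shape of sentences in $\Sen_b$ (see Proposition~\ref{prop:loc-basic} and the definition of $\Sen^\HDFOLR_b(\Delta)$); I should also confirm $\at{k}\gamma\in\Sen^\L(\Delta[C])$, which follows from the Framework assumption that $\L$ is closed under retrieve applied to extended atomic sentences (equivalently, $\at{k}\gamma\in\Sen_b(\Delta[C])=\Sen^\L(\Delta[C])\cap\Sen^\HDFOLR_b(\Delta[C])$ once we know $\gamma\in\Sen_0^\L(\Delta[C])$ and $\L$ is semantically closed under retrieve).

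The only real subtlety — and the step I expect to need the most care — is the interplay between the global satisfaction relation $\models$ used in the definition of $P$ and in condition~\ref{def:forcing-4}, and the rigidity of $\at{k}\gamma$. Since $f(p)\models\at{k}\gamma$ means every Kripke structure satisfying $f(p)$ globally satisfies $\at{k}\gamma$, and $(W,M)\models f(p)$ globally (because $(W,M)\models p$ and $f(p)\subseteq p$), we directly obtain $(W,M)\models\at{k}\gamma$; there is no need to worry about which world, precisely because $\at{k}\gamma$ is evaluated by retrieving the world $W_k$. So the argument goes through cleanly, and it does not use any closure property of $\L$ beyond semantic closure under negation and retrieve, consistently with the Framework. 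I would close by remarking that, as in the first-order case, no compactness or countability assumption is needed for $\mathbb{P}$ to be a forcing property — those hypotheses only enter later, in the construction of generic sets for uncountable signatures.
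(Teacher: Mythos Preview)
Your proposal is correct and follows essentially the same approach as the paper: the first three conditions are declared immediate, and for condition~\ref{def:forcing-4} you set $q=p\cup\{\at{k}\gamma\}$ and use $f(p)\subseteq p$ together with the witness model from the definition of $P$ to conclude $q\in P$. You are simply more explicit than the paper (which dispatches conditions~\ref{def:forcing-1}--\ref{def:forcing-3} in one sentence and omits the cardinality check and the verification that $\at{k}\gamma\in f(q)$), and you rightly flag the tacit assumption that $\mathcal{K}\neq\emptyset$ needed for $\emptyset\in P$.
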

\begin{proof}
All conditions enumerated in Definition~\ref{def:forcing} obviously hold except the last one.
Assume that $f(p)\models \at{k}\gamma$, where $p\in P$ and $\at{k}\gamma\in\Sen_b(\Delta)$.   
Since $f(p)\subseteq p$, we have $p\models \at{k}\gamma$.
By Definition~\ref{def:semantic-forcing}, $(W,M)\models p$ for some $(W,M)\in|\Mod(\Delta[C])|$ such that $(W,M)\red_{\Delta}\in\mathcal{K}$.
Since $(W,M)\models p$ and $p\models \at{k}\gamma$, $(W,M)\models p\cup\{\at{k}\gamma \}$.
Hence, $q\coloneqq p\cup\{\at{k}\gamma\}\in P$ and $p\leq q$ 
\end{proof}

\begin{proposition}\label{prop:semantic-forcing}
$\mathbb{P}=\langle P, \leq, f \rangle $ described in Definition~\ref{def:semantic-forcing} has the following properties:

\begin{enumerate}[label=P\arabic*),ref=P\arabic*]
\item \label{sf1} If $p\in P$ and $\at{k} \pos{\act_1\comp\act_2}k''\in p$ then $p\cup\{\at{k}\pos{\act_1}k',\at{k'}\pos{\act_2}k''\}\in P$ for some nominal $k'\in C_\any$.
\item \label{sf2} 
If $p\in P$ and $\at{k}\pos{\act}\gamma\in p$ with $\gamma\not\in F^\nom \cup C_\any $ 
then $p\cup\{\at{k}\pos{\act}k',\at{k'}\gamma\}\in P$ for some nominal $k'\in C_\any $.
\item \label{sf3} If $p\in P$ and $\at{k}\vee \Gamma\in p$ then $p\cup\{\at{k} \gamma\}\in P$ for some $\gamma\in\Gamma$.
\item  \label{sf4} 
If $p\in P$ and $\at{k} \Exists{X}\gamma\in p$ then there exists an injective mapping
$f\colon X\to C$ such that $p\cup \{\at{k}\chi(\gamma)\}\in P$, where $\chi\colon \Delta[C,X]\to \Delta[C]$ is the unique extension of $f$ to a signature morphism which preserves $\Delta[C]$.
\end{enumerate}
\end{proposition}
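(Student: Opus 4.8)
The plan is to prove all four items by a common template. Starting from a condition $p\in P$, I would fix a witnessing Kripke structure $(W,M)$ over $\Delta[C]$ with $(W,M)\models p$ and $(W,M)\red_\Delta\in\mathcal{K}$ (such exists by Definition~\ref{def:semantic-forcing}); then use the semantics of the relevant sentence-building operator to extract a \emph{semantic} witness (a world, a disjunct, or an expansion); record that witness in the model by reinterpreting finitely many \emph{fresh} constants drawn from $C$; and finally check that the enlarged sentence set still has cardinality $<\alpha$ and is still satisfied by a model whose $\Delta$-reduct lies in $\mathcal{K}$, hence is still a condition. Fresh constants are available because $\card(p)<\alpha=\card(C_s)$ for every $s\in S^\ext$ while each sentence mentions only finitely many symbols of $C$, so the set of $C$-constants occurring in $p$ has cardinality $<\alpha$.

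For \ref{sf1}: from $\at{k}\pos{\act_1\comp\act_2}k''\in p$ and $W_{\act_1\comp\act_2}=W_{\act_1}\comp W_{\act_2}$ I get a world $w\in|W|$ with $(W_k,w)\in W_{\act_1}$ and $(w,W_{k''})\in W_{\act_2}$. I would pick $k'\in C_\any$ not occurring in $p$ and let $(W^\circ,M^\circ)$ be $(W,M)$ with the single change $W^\circ_{k'}=w$. Since $k'$ does not occur in $p$, $(W^\circ,M^\circ)\red_{\Delta[C\setminus\{k'\}]}=(W,M)\red_{\Delta[C\setminus\{k'\}]}$, so by Proposition~\ref{prop:sat-cond} applied to the inclusion $\Delta[C\setminus\{k'\}]\hookrightarrow\Delta[C]$ we retain $(W^\circ,M^\circ)\models p$ and $(W^\circ,M^\circ)\red_\Delta\in\mathcal{K}$; the modalities are untouched, so $W^\circ_{\act_i}=W_{\act_i}$ and, noting $k,k''$ occur in $p$ and hence differ from $k'$, $(W^\circ,M^\circ)\models\at{k}\pos{\act_1}k'$ and $(W^\circ,M^\circ)\models\at{k'}\pos{\act_2}k''$. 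Thus $p\cup\{\at{k}\pos{\act_1}k',\at{k'}\pos{\act_2}k''\}\in P$. Item \ref{sf2} goes the same way, taking $w$ to witness $(W,M)\models^{W_k}\pos{\act}\gamma$ and using that $\gamma$, being a subsentence of a sentence of $p$, does not contain $k'$, so $(W^\circ,M^\circ)\models\at{k'}\gamma$. Item \ref{sf3} is immediate: $(W,M)\models^{W_k}\vee\Gamma$ yields $\gamma\in\Gamma$ with $(W,M)\models\at{k}\gamma$, and $p\cup\{\at{k}\gamma\}\in P$ with no change to the model.

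For \ref{sf4}: from $(W,M)\models\at{k}\Exists{X}\gamma$ I get an expansion $(W',M')$ of $(W,M)$ to $\Delta[C,X]$ with $(W',M')\models^{W_k}\gamma$. I would choose an injective, sort-preserving $f\colon X\to C$ whose image avoids every $C$-constant occurring in $p$ (possible since $X$ is finite), and let $\chi\colon\Delta[C,X]\to\Delta[C]$ be the induced morphism fixing $\Delta[C]$ and sending $x\mapsto f(x)$. Define $(W^\circ,M^\circ)$ over $\Delta[C]$ to agree with $(W,M)$ on every symbol except the constants in $f(X)$, which are reinterpreted by $(W^\circ,M^\circ)_{f(x)}=(W',M')_x$. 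As before $(W^\circ,M^\circ)\red_{\Delta[C\setminus f(X)]}=(W,M)\red_{\Delta[C\setminus f(X)]}$, giving $(W^\circ,M^\circ)\models p$ and $(W^\circ,M^\circ)\red_\Delta\in\mathcal{K}$. The crux is that $(W^\circ,M^\circ)\red_\chi$ and $(W',M')$ interpret every symbol occurring in $\gamma$ identically: they agree on $\Delta[C\setminus f(X)]$, both send each $x\in X$ to $(W',M')_x$, and $\gamma$ mentions none of the constants in $f(X)$; hence $(W^\circ,M^\circ)\red_\chi\models^{W_k}\gamma$, so by Proposition~\ref{prop:sat-cond} $(W^\circ,M^\circ)\models\at{k}\chi(\gamma)$, and $p\cup\{\at{k}\chi(\gamma)\}\in P$.

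The main obstacle is the bookkeeping in \ref{sf4}: one must accept that $(W^\circ,M^\circ)$, though a genuine $\Delta[C]$-model, \emph{reinterprets} the constants $f(x)\in C$ even though they lie in the "fixed" part $\Delta[C]$ of $\chi$, and one must verify that $(W^\circ,M^\circ)\red_\chi$ coincides with the semantic witness $(W',M')$ on precisely the symbols that occur in $\gamma$. Freshness of $f(X)$ relative to $p$ is exactly what makes the clause "$\models p$" and this agreement hold at once. Items \ref{sf1}--\ref{sf3} are routine once the fresh-nominal renaming and the satisfaction condition for signature inclusions are in place.
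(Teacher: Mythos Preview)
Your proposal is correct and follows essentially the same approach as the paper: in each item you pick a witnessing model for $p$, extract the semantic witness guaranteed by the relevant operator, name it by fresh constants from $C$ (available since $\card(p)<\alpha$), and invoke the satisfaction condition for inclusions to retain $p$ and $\mathcal{K}$-membership. The only cosmetic difference is that the paper first reduces $(W,M)$ to $\Delta[C']$ (the constants actually occurring in $p$), expands there by the fresh constants, and then re-expands arbitrarily to $\Delta[C]$, whereas you modify $(W,M)$ in place by reinterpreting the fresh constants; these are equivalent formulations of the same construction.
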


\begin{proof} 
Let $p\in P$ be a condition.
By the definition of $\mathbb{P}$, we have that $p\subseteq \Sen(\Delta[C'])$  for some $C'\subset C$ with $\card(C'_s)<\alpha$ for all $s\in S^\ext$.

\begin{enumerate}[label=P\arabic*)]
\item Assume that $\at{k} \pos{\act_1\comp\act_2}k''\in p$.
Since $\card(C_\any)=\alpha$ and $\card(C'_\any)<\alpha$, there exists $k'\in C_\any\setminus C'_\any$.
We show that $p\cup\{\at{k}\pos{\act_1}k',\at{k'}\pos{\act_2}k''\}\in P$:
\begin{proofsteps}{30em}
$(W,M)\models p$ for some model $(W,M)$ over $\Delta[C]$ with $(W,M)\red_\Delta\in\mathcal{K}$ 
& by the definition of $\mathbb{P}$\\

$(W',M')\coloneqq(W,M)\red_{\Delta[C']}\models p$ & by the satisfaction condition \\

$(W'_k,w)\in W'_{\act_1}$ and $(w,W'_{k''})\in W'_{\act_2}$ for some $w\in |W'|$ & 
since $(W',M')\models\at{k}\pos{\act_1\comp\act_2}k''$\\

$(W'',M')\models \at{k}\pos{\act_1}k'$ and $(W'',M')\models \at{k'}\pos{\act_2}k''$,
where $(W'',M')$ is the unique expansion of $(W',M')$ to $\Delta[C',k']$ interpreting $k'$ as $w$ \\

$(V,N)\models p\cup\{\at{k}\pos{\act_1}k',\at{k'}\pos{\act_2}k''\}$, where $(V,N)$ is any expansion of $(W'',M')$ to $\Delta[C]$ 
& by the satisfaction condition, since $(W'',M')\models p\cup\{\at{k}\pos{\act_1}k',\at{k'}\pos{\act_2}k''\}$ \\

$p\cup\{\at{k}\pos{\act_1}k',\at{k'}\pos{\act_2}k''\}\in P$ 
& since $(V,N)\models p\cup\{\at{k}\pos{\act_1}k',\at{k'}\pos{\act_2}k''\}$ and $(V,N)\red_\Delta=(W',M')\red_\Delta\in\mathcal{K}$
\end{proofsteps}
\item Assume that $\at{k}\pos{\act}\gamma\in p$ with $\gamma\not\in F^\nom \cup C_\any $.
Since $\card(C_\any)=\alpha$ and $\card(C'_\any)<\alpha$, there exists $k'\in C_\any\setminus C'_\any$. 
We show that $p\cup\{\at{k}\pos{\act}k',\at{k'}\gamma\}\in P$:
\begin{proofsteps}{28em}
$(W,M)\models p$ for some model $(W,M)$ over $\Delta[C]$ with $(W,M)\red_\Delta\in\mathcal{K}$ 
& by the definition of $\mathbb{P}$\\
$(W',M')\coloneqq(W,M)\red_{\Delta[C']}\models p$ & by the satisfaction condition \\
$(W'_k,w)\in W'_{\act}$ and $(W',M')\models^w\gamma$ for some $w\in |W'|$ &
since $(W',M')\models \at{k}\pos{\act}\gamma$\\
$(W'',M')\models \at{k}\pos{\act} k'$ and $(W'',M')\models \at{k'}\gamma$,
where $(W'',M')$ is the unique expansion of $(W',M')$ to $\Delta[C,k']$ interpreting $k'$ as $w$
& by semantics\\
$(V,N)\models p\cup\{\at{k}\pos{\act} k', \at{k'}\gamma \}$, where $(V,N)$ is any expansion of $(W'',M')$ to $\Delta[C]$ 
&
by the satisfaction condition, since $(W'',M')\models p\cup\{\at{k}\pos{\act} k', \at{k'}\gamma \}$\\
$p\cup\{\at{k}\pos{\act} k', \at{k'}\gamma \}\in P$ 
& since $(V,N)\red_\Delta=(W',M')\red_\Delta\in\mathcal{K}$ 
\end{proofsteps}

\item Assume that $\at{k}\vee \Gamma\in p$.
There exists a Kripke structure $(W,M)$ over $\Delta[C]$ such that $(W,M)\models p$ and $(W,M)\red_\Delta\in\mathcal{K}$.
Since $(W,M)\models \at{k}\vee\Gamma$, we have $(W,M)\models \at{k}\gamma$ for some $\gamma\in\Gamma$.
Since $(W,M)\models p$, $(W,M)\models \at{k}\gamma$  and  $(W,M)\red_\Delta\in\mathcal{K}$, we obtain $p\cup\{\at{k}\gamma\}\in P$. 

\item Assume that $\at{k} \Exists{X} \gamma\in p$.
Since $\card(C'_s)<\alpha$ and $\card(C_s)=\alpha$ for all sorts $s\in S^\ext$, by the finiteness of $X$, there exists an injective mapping $f\colon X \to C\setminus C'$.
Let $C''\coloneqq C'\cup f(X)$.
Let $\chi'\colon \Delta[C',X]\to \Delta[C'']$ be the unique extension of $f$ to a signature morphism which preserves $\Delta[C']$.
Let $\chi\colon \Delta[C,X]\to \Delta[C]$ be the unique extension of $f$ to a signature morphism which preserves $\Delta[C]$.
Let $\iota\colon\Delta[C'']\hookrightarrow \Delta[C]$ and $\iota'\colon\Delta[C',X]\hookrightarrow \Delta[C,X]$ be inclusions.
Since $\chi$ and $\chi'$ agree on $X$ and they preserve the rest of the symbols, we have $\chi'\comp\iota=\iota'\comp \chi$.

$$\xymatrix{ & \Delta[C',X] \ar@{-->}[rr]^{\iota'} \ar@{-->}[d]^{\chi'} & & \Delta[C,X] \ar@{-->}[d]^{\chi}\\
               \Delta[C'] \ar@{-->}[ur]^{\subseteq} \ar[r]_{\subseteq} & \Delta[C''] \ar[r]_\iota & \Delta[C] \ar[ur]^{\subseteq} \ar[r]_{1_{\Delta[C]}} & \Delta[C]
}$$
We show that $p\cup \{\at{k}\chi(\gamma)\}\in P$:
\begin{proofsteps}{23em}
$(W,M)\models p$ for some Kripke structure $(W,M)$ over the signature $\Delta[C]$ such that $(W,M)\red_\Delta\in\mathcal{K}$ 
& by the definition of $\mathbb{P}$\\ 

$(W',M')\coloneqq (W,M)\red_{\Delta[C']}\models p$ & by the satisfaction condition \\

$(V',N')\models^w \gamma$ for some expansion $(V',N')$ of $(W',M')$ to the signature $\Delta[C',X]$, 
where $w= W'_k=V'_k$
& since $\at{k}\Exists{X}\gamma\in p$ and $(W',M')\models p$\\

\label{sfp-4} 
let $(V'',N'')$ be the unique $\chi'$-expansion of $(V',N')$  
& $(V'',N'')$ exists, as $\chi'$ is a bijection\\ 

\label{sfp-5} 
let $(V,N)$ be any expansion of $(V'',N'')$ to $\Delta[C]$ & \\

$(V,N)\red_\chi\red_{\iota'}=(V,N)\red_\iota\red_{\chi'}=(V'',N'')\red_{\chi'}=(V',N')$
& from \ref{sfp-4} and \ref{sfp-5}, since $\iota'\comp \chi=\chi'\comp \iota$\\

$(V,N)\red_\chi\models^w\gamma$ 

& by the local satisfaction condition, 

since $(V,N)\red_\chi\red_{\iota'}=(V',N')\models^w\gamma$\\

$(V,N)\models^w\chi(\gamma)$ & by the local satisfaction condition \\

\label{sf3-10} $(V,N) \models \at{k}\chi(\gamma)$ & since  $w=V'_k=(V\red_\chi\red_{\iota'})_k=V_k$\\

$(V,N)\models p$ & by the satisfaction condition, 

since $(V,N)\red_{\Delta[C']}=(W',M')\models p$\\

\label{sf3-12} $(V,N)\red_\Delta\in\mathcal{K}$ 

& since $(V,N)\red_{\Delta[C']}=(W',M')$ and $(W',M')\red_\Delta\in\mathcal{K}$\\

$p\cup \{\at{k}\chi(\gamma)\}\in P$ & from \ref{sf3-10}---\ref{sf3-12}
\end{proofsteps}
\end{enumerate}
\end{proof}

Proposition~\ref{prop:semantic-forcing} sets the basis for the following important result concerning semantic forcing properties, which says that all sentences of a given condition are forced eventually by some condition greater or equal than the initial one.

\begin{theorem}[Semantic Forcing Theorem] \label{th:semantic-forcing}
Let $\mathbb{P}=\langle P, \leq, f \rangle $ be the semantic forcing property described in Definition~\ref{def:semantic-forcing}.
For all $\Delta[C]$-sentences $\at{k}\gamma$ and conditions $p\in P$ we have:
$$ q\Vdash^k \gamma \mbox{ for some }q\geq p\mbox{ iff } p\cup\{\at{k}\gamma\}\in P.$$
\end{theorem}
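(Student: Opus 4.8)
The plan is to prove both implications simultaneously by induction on the structure of $\gamma$, following the clauses of Definition~\ref{def:forcing-relation}; since that definition treats $\pos{\act^*}\delta$ by reducing it to $\pos{\act^n}\delta$, the induction should be run over the Fischer--Ladner-style closure ordering in which each $\pos{\act^n}\delta$ lies below $\pos{\act^*}\delta$, so that every clause appeals only to strictly smaller sentences. Throughout, I would call $(W,M)$ a \emph{$\mathcal{K}$-model} of a set $\Phi\subseteq\Sen(\Delta[C])$ if it is a Kripke structure over $\Delta[C]$ with $(W,M)\red_\Delta\in\mathcal{K}$ and $(W,M)\models\Phi$; thus $\Phi\in P$ iff $\card(\Phi)<\alpha$ and $\Phi$ has a $\mathcal{K}$-model, and in particular $P$ is closed under inclusion. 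For the left-to-right direction I would prove the slightly stronger ``if $q\Vdash^k\gamma$ then $q\cup\{\at{k}\gamma\}\in P$'', which yields the stated implication at once because $p\cup\{\at{k}\gamma\}\subseteq q\cup\{\at{k}\gamma\}$.

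For the right-to-left direction, assuming $p\cup\{\at{k}\gamma\}\in P$ and writing $p_1:=p\cup\{\at{k}\gamma\}$, I would go by cases on $\gamma$. If $\gamma$ is extended atomic, then $q:=p_1$ works, since $\at{k}\gamma\in\Sen_b(\Delta[C])\cap p_1=f(p_1)$ and clause~(\ref{fr1}) applies. If $\gamma$ is a disjunction, a possibility $\pos{\act_1\comp\act_2}k''$, a possibility $\pos{\act}\delta$ with $\delta\notin F^\nom$, or an existential quantification, the plan is to invoke the matching witnessing item \ref{sf3}, \ref{sf1}, \ref{sf2}, or \ref{sf4} of Proposition~\ref{prop:semantic-forcing} to enlarge $p_1$ to a condition containing the needed auxiliary sentences --- a disjunct $\at{k}\delta$; the pair $\at{k}\pos{\act_1}k'$, $\at{k'}\pos{\act_2}k''$ for a fresh $k'\in C_\any$; the pair $\at{k}\pos{\act}k'$, $\at{k'}\delta$; or an instance $\at{k}\chi(\delta)$ along an injective renaming $\chi$ --- then apply the induction hypothesis to each auxiliary sentence and close with the corresponding clause of Definition~\ref{def:forcing-relation}. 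The cases $\pos{\act_1\cup\act_2}k''$ and $\pos{\act^*}k''$ need no new item of Proposition~\ref{prop:semantic-forcing}: a $\mathcal{K}$-model of $p_1$ directly shows that $\at{k}\pos{\act_i}k''$ (resp.\ $\at{k}\pos{\act^n}k''$ for some $n$) can be added, whence the induction hypothesis with clause~(\ref{fr3}) (resp.~(\ref{fr4})). For $\gamma=\at{k'}\delta$ and $\gamma=\store{z}\delta$ I would rewrite $p_1$ using the equivalences $\at{k}\at{k'}\delta\modelsm\at{k'}\delta$ and $\at{k}\store{z}\delta\modelsm\at{k}\delta(z\leftarrow k)$, reducing to a structurally smaller sentence. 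The delicate case is $\gamma=\neg\delta$: here I would take $q:=p_1$ itself and check $q\Vdash^k\neg\delta$, i.e.\ that no $r\geq q$ forces $\delta$ at $k$ --- for if one did, the left-to-right induction hypothesis would give $r\cup\{\at{k}\delta\}\in P$, yet that set contains both $\at{k}\delta$ and $\at{k}\neg\delta$, which cannot share a $\mathcal{K}$-model by semantic closure under negation.

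For the left-to-right direction I would unfold $q\Vdash^k\gamma$ via the corresponding clause of Definition~\ref{def:forcing-relation}, apply the induction hypothesis to the immediate subsentences to return inside $P$, and then read off from a $\mathcal{K}$-model of the resulting condition the semantic facts that force $\at{k}\gamma$ to hold there --- using the local satisfaction condition for substitutions of Section~\ref{sec:subst} for the quantifier clause, and the two equivalences above for retrieve and store. The case $\gamma=\neg\delta$ is again handled by contraposition: a $\mathcal{K}$-model of $q$ either satisfies $\at{k}\neg\delta$ already, or it satisfies $\at{k}\delta$, and then $q\cup\{\at{k}\delta\}\in P$, so the right-to-left induction hypothesis produces $r\geq q$ with $r\Vdash^k\delta$, contradicting $q\Vdash^k\neg\delta$.

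The main obstacle, I expect, is not any single clause but the ``synchronisation'' needed whenever forcing of a compound sentence is defined through \emph{two} auxiliary forcings over the same condition --- $\pos{\act_1\comp\act_2}k''$ (clause~(\ref{fr2})) and $\pos{\act}\delta$ with $\delta\notin F^\nom$ (clause~(\ref{fr8})), and likewise the witnessing items \ref{sf1} and \ref{sf2} of Proposition~\ref{prop:semantic-forcing}. One cannot just take the union of the two conditions handed back by the two appeals to the induction hypothesis, since a union of incomparable conditions need not be $\mathcal{K}$-satisfiable. The fix is to chain the appeals: apply the induction hypothesis to the first auxiliary sentence \emph{over the already enlarged condition} (so its output still contains the remaining auxiliary sentences), getting $q_1\in P$ above it; then apply it again, over $q_1$, to the second auxiliary sentence, getting $q_2\in P$ with $q_2\supseteq q_1$; then use monotonicity of forcing (Lemma~\ref{lemma:forcing-property}(\ref{p2})) to carry the first, already-established forcing up to $q_2$; and finally evaluate one $\mathcal{K}$-model of $q_2$, whose two accessibility witnesses compose exactly as the semantics of $\comp$ requires. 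The same chaining, again using Lemma~\ref{lemma:forcing-property}(\ref{p2}), glues together the two induction-hypothesis outputs in the composition and existential cases of the right-to-left direction; apart from this bookkeeping and the two contraposition arguments for negation, every case is routine.
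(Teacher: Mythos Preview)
Your proposal is correct and follows essentially the same approach as the paper: structural induction on $\gamma$, with Proposition~\ref{prop:semantic-forcing} supplying the witnesses for the right-to-left direction, the chaining trick (via Lemma~\ref{lemma:forcing-property}(\ref{p2})) to synchronise the two-part cases, and the contraposition argument for negation. The only cosmetic difference is that for the left-to-right direction you isolate the slightly stronger claim ``$q\Vdash^k\gamma$ implies $q\cup\{\at{k}\gamma\}\in P$'', whereas the paper works directly with the statement as given and instantiates the induction hypothesis at various base conditions (e.g.\ at $q$, at $q\cup\{\at{k}\pos{\act_1}k'\}$, etc.); these amount to the same thing. Your explicit remark about the Fischer--Ladner-style ordering needed for the $\pos{\act^*}$ clause is a point the paper leaves implicit.
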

\begin{proof}
 We proceed by induction on the structure of $\gamma$.
  \begin{proofcases}[itemsep=1.5ex] 
 \item[For $\gamma$ extended atomic]
 Assume that there is $q\geq p$ such that $q\Vdash^k\gamma$. 
 We show that $p\cup\{\at{k}\gamma\}\in P$:
 \begin{proofsteps}{20em}
   $\at{k}\gamma\in q$ & 
   by Definition~\ref{def:forcing-relation}\\
   
   $p\cup\{\at{k}\gamma\}\leq q$ & 
   since $q\geq p$ \\
   
   $(W,M)\models q$ for some Kripke structure $(W,M)$ over the signature $\Delta[C]$ such that $(W,M)\red_\Delta\in\mathcal{K}$ &
   since $q\in P$\\ 
   
   $p\cup\{\at{k}\gamma\}\in P$ & 
   since $(W,M)\models p\cup\{\at{k}\gamma\}$ and $(W,M)\red_\Delta\in\mathcal{K}$
 \end{proofsteps}
 
 Assume that $p\cup\{\at{k}\gamma\}\in P$. 
 Let $q=p\cup\{\at{k}\gamma\}$. 
 By Definition~\ref{def:forcing-relation}, $q\Vdash^k \gamma$.
 \item [For $\pos{\act_1\cup\act_2}k''$]
The following are equivalent:
 \begin{proofsteps}{26em}
  $q\Vdash^k \pos{\act_1\cup\act_2}k''$ for some $q\geq p$ & \\
  $q\Vdash^k\pos{\act_1}k''$ or $q\Vdash^k\pos{\act_2}k'' $ & by Definition~\ref{def:forcing-relation}\\
  
  $p\cup\{\at{k}\pos{\act_1}k''\}\in P$ or $p\cup\{\at{k}\pos{\act_2}k''\}\in P$ & 
  by the induction hypothesis \\
  
  $(W,M)\models p\cup\{\at{k}\pos{\act_1}k''\} $ or $(W,M)\models p\cup\{\at{k}\pos{\act_2}k''\} $
  for some Kripke structure $(W,M)$ over $\Delta[C]$ such that $(W,M)\red_\Delta\in\mathcal{K}$ & 
  by Definition~\ref{def:semantic-forcing}\\
  
  $(W,M)\models p\cup \{\at{k}\pos{\act_1\cup\act_2}k''\}$ for some Kripke structure $(W,M)$ over $\Delta[C]$ such that $(W,M)\red_\Delta\in\mathcal{K}$
  & by the semantics of $\act_1\cup\act_2$ \\
  
  $p\cup\{\at{k}\pos{\act_1\cup\act_2}k''\} \in P$ &
  since $(W,M)\models p\cup \{ \at{k}\pos{\act_1\cup\act_2}k''\}$ and $(W,M)\red_\Delta\in\mathcal{K}$
 \end{proofsteps}
 \item[For $\pos{\act_1\comp\act_2}k''$]
 Assume that $q\Vdash^k \pos{\act_1\comp\act_2}k''$ for some $q\geq p$.
 We show that $p\cup\{\pos{\act_1\comp\act_2}k''\}\in P$:
 
 \begin{proofsteps}{24em}
 
 $q\Vdash^k\pos{\act_1}k'$ and $q\Vdash^{k'}\pos{\act_2}k''$ for some $k'\in F^\nom \cup C_\any $ 
 & by Definition~\ref{def:forcing-relation}\\
 
 $q\cup \{\at{k} \pos{\act_1}k'\}\in P$ & by the induction hypothesis, since $q\leq q$\\
 
 $q\cup \{\at{k} \pos{\act_1}k'\}\Vdash^{k'} \pos{\act_2}k''$ & by Lemma~\ref{lemma:forcing-property} (\ref{p2}), since $q\Vdash^{k'}\pos{\act_2}k''$ and $q\leq q\cup \{ \at{k}\pos{\act_1} k'\}$ \\
 
 $p\cup \{\at{k} \pos{\act_1}k'\}\cup\{\at{k'}\pos{\act_2}k''\}\in P$ & 
 by the induction hypothesis, since $p\cup \{ \pos{\act_1}k'\}\leq q\cup \{ \pos{\act_1}k'\}$\\
 
 $p\cup\{\at{k} \pos{\act_1\comp\act_2}k''\}\in P$ & by the definition of $\mathbb{P}$
 \end{proofsteps}
Assume that $p\cup\{\at{k}\pos{\act_1\comp\act_2}k''\}\in P$. 
We show that $q\Vdash^k \pos{\act_1\comp\act_2}k''$ for some $q\geq p$:
\begin{proofsteps}{28em}
 $p\cup\{\at{k}\pos{\act_1\comp\act_2}k'', \at{k} \pos{\act_1}k', \at{k'}\pos{\act_2}k''\}\in P$ for some $k\in C_\any $ &  by Proposition~\ref{prop:semantic-forcing} (\ref{sf1})\\
 
 let $r\coloneqq p\cup\{\at{k}\pos{\act_1\comp\act_2}k'', \at{k} \pos{\act_1}k', \at{k'}\pos{\act_2}k''\}$ & \\
 
 $s\Vdash^k \pos{\act_1}k'$ for some $s\geq r$ 

 & by the induction hypothesis, 
 
 since $r\cup\{ \at{k} \pos{\act_1} k'\}=r\in P$ \\
 
 \label{comp:sfp-4} $q \Vdash^{k'} \pos{\act_2}k''$ for some $q\geq s$ 
 
 & by the induction hypothesis, 
 
 since $s\cup\{\at{k'}\pos{\act_2}k''\}=s \in P$ \\
 
 \label{comp:sfp-5} $q \Vdash^k \pos{\act_1}k'$ 
 
 & by Lemma~\ref{lemma:forcing-property} (\ref{p2}), since $s\Vdash^k \pos{\act_1}k'$ and $q\geq s$ \\
 
 $q\Vdash^k \pos{\act_1\comp\act_2}k''$ & from \ref{comp:sfp-4} and \ref{comp:sfp-5}
\end{proofsteps}

 \item[For $\pos{\act^*}k''$]
 The following are equivalent:
 
\begin{proofsteps}{24em}
$q\Vdash^k\pos{\act^*}k''$ for some $q\geq p$ & \\

$q\Vdash^k \pos{\act^n}k''$ for some $q\geq p$ and $n\in\N$ 
& by Definition~\ref{def:forcing-relation} \\
 
$p\cup\{\at{k} \pos{\act^n}k''\}\in P$ for some $n\in\N$ 
&  by the induction hypothesis\\
 
$p\cup\{\at{k} \pos{\act^*}k''\}\in P$ 
& by the semantics of $\act^*$ and the definition of $\mathbb{P}$
 \end{proofsteps}
\item[For $\pos{\act}\gamma$ with $\gamma\not\in{F^\nom \cup C_\any }$] 
Assume that $q\Vdash^k\pos{\act}\gamma$ for some $q\geq p$.
 We show that $p\cup\{\at{k} \pos{\act} \gamma\} \in P$:
 \begin{proofsteps}{24em}
 
  $q\Vdash^k \pos{\act}k'$ and $q\Vdash^{k'} \gamma$ for some nominal $k'$ & 
  from $q\Vdash^k \pos{\act}\gamma$, by Definition~\ref{def:forcing-relation} \\
  
  $q\cup\{\at{k'}\gamma\}\in P$ &
  from $q\leq q$ and $q\Vdash^{k'} \gamma$, by the induction hypothesis \\
  
  $\at{k}\pos{\act}k'\in q$ &
  from $q\Vdash^k \pos{\act}k'$, by Definition~\ref{def:forcing-relation} \\
   
  $(W,M)\models q\cup \{\at{k}\pos{\act}k', \at{k'}\gamma \}$ for some Kripke structure $(W,M)$ over $\Delta[C]$ such that $(W,M)\red_\Delta\in\mathcal{K}$ 
  & 
  since $q\cup\{\at{k'}\gamma\}\in P$ and $\at{k}\pos{\act}k'\in q$ \\
  
  $(W,M)\models q\cup \{\at{k}\pos{\act}\gamma \}$ for some Kripke structure $(W,M)$ over $\Delta[C]$ such that $(W,M)\red_\Delta\in\mathcal{K}$ 
  & since $\{\at{k}\pos{\act}k', \at{k'}\gamma \}\models \at{k}\pos{\act}\gamma$\\
  
  $q\cup \{\at{k}\pos{\act}\gamma \}\in P$ & by Definition~\ref{def:semantic-forcing}\\
 
  $p\cup \{\at{k}\pos{\act}\gamma \}\in P$ & since $p\subseteq q$
 \end{proofsteps}

Assume that $p\cup \{ \at{k}\pos{\act}\gamma\}\in P$.
We show that $q\Vdash^k \pos{\act}\gamma$ for some $q\geq p$:
 \begin{proofsteps}{28em}
   $(p\cup \{ \at{k}\pos{\act}\gamma\})\cup \{ \at{k} \pos{\act}k',\at{k'} \gamma \} \in P$ 
   for some nominal $k'\in F^\nom \cup C_\any $ & 
  by Proposition~\ref{prop:semantic-forcing}~(\ref{sf2}) \\
  
   \label{ps:cfp-pos2}  $q\Vdash^{k'} \gamma$ for some $q\geq p\cup \{ \at{k}\pos{\act} \gamma, \at{k} \pos{\act}k' \}$ & 
  by the induction hypothesis\\
  
   \label{ps:cfp-pos3} $q\Vdash^k \pos{\act}k'$ & since $\at{k} \pos{\act}k' \in f(q)$\\
  
   $q\Vdash^k \pos{\act} \gamma$ & by \ref{ps:cfp-pos3} and \ref{ps:cfp-pos2}
 \end{proofsteps}

\item[For $\neg\gamma$] By the induction hypothesis, for each $q\in P$ we have
 \begin{enumerate}[label=(S\arabic*), ref= S\arabic*]
 
 \item $r\Vdash^k \gamma$  for some $r\geq q$       iff  $q\cup\{ \at{k}\gamma\}\in P$, which is equivalent to
 
 \item $r\not\Vdash^k \gamma$  for all  $r\geq q$   iff  $q\cup \{ \at{k}\gamma\}\not \in P$, which is equivalent to

 \item \label{negIH} $q\Vdash^k \neg\gamma$        iff  $q\cup \{ \at{k}\gamma\}\not \in P$.
 
 \end{enumerate}

Assume that $q\Vdash^k \neg\gamma$  for some $q\geq p$. 
We show that $p\cup\{\at{k} \neg\gamma\}\in P$:
\begin{proofsteps}{27em}
  $q\cup\{ \at{k}\gamma\}\not\in P$ & by statement~\ref{negIH}\\
  
  $(W,M)\models q$ for some Kripke structure $(W,M)$ over $\Delta[C]$ such that $(W,M)\red_\Delta\in\mathcal{K}$ & by Definition~\ref{def:semantic-forcing}, since $q\in P$ \\
 
  $(W,M)\not \models \at{k}\gamma $ & since $q\cup\{\at{k}\gamma\}\not\in P$ \\

  $(W,M)\models \at{k}\neg\gamma $ & 
  by the semantics of $\neg$  \\  
   
 $q\cup\{\at{k}\neg\gamma\}\in P$ & since $(W,M)\models q\cup\{\at{k}\neg\gamma\}$ and $(W,M)\red_\Delta\in\mathcal{K}$ \\
 
 $p\cup\{\at{k}\neg\gamma\}\in P$ & since $p\cup\{\at{k}\neg\gamma\}\subseteq q\cup\{\at{k}\neg\gamma\}$
\end{proofsteps}
  
   Assume that $p\cup\{\at{k}\neg\gamma\}\in P$.  
 We show that $q\Vdash^k\neg\gamma$ for some $q\geq p$:
 \begin{proofsteps}{20em}
  let  $q=p\cup\{\at{k} \neg\gamma\}$ & \\ 
  $q\cup\{ \at{k}\gamma\}\not\in P$ & since $\at{k}\neg\gamma \in q$ \\
  $q\Vdash^k \neg\gamma$ & by statement~\ref{negIH} 
\end{proofsteps}
\item[For $\vee \Gamma$] Assume that there exists $q\geq p$ such that $q\Vdash^k \vee\Gamma$. 
We show that $p\cup\{\at{k} \vee\Gamma\}\in P$:

 \begin{proofsteps}{20em}
  $q\Vdash^k\gamma$ for some $\gamma\in\Gamma$ & 
  by Definition~\ref{def:forcing-relation}\\
  
  $p\cup\{\at{k}\gamma\}\in P$ & 
  by the induction hypothesis\\
  
  $(W,M)\models p\cup \{\at{k}\gamma\}$ for some Kripke structure $(W,M)$ over $\Delta[C]$ such that $(W,M)\red_\Delta\in\mathcal{K}$ & by Definition~\ref{def:semantic-forcing} \\  
  
  $(W,M)\models p\cup \{\at{k} \vee\Gamma \}$  for some Kripke structure $(W,M)$ over $\Delta[C]$ such that $(W,M)\red_\Delta\in\mathcal{K}$ & by the semantics of $\vee$\\
  
  $p\cup \{\at{k} \vee\Gamma \} \in P$ & by Definition~\ref{def:semantic-forcing}
  \end{proofsteps}

Assume that $p\cup \{\at{k}\vee\Gamma\}\in P$.
We show that $q\Vdash^k \vee\Gamma$ for some $q\geq p$:
 \begin{proofsteps}{20em}
  \label{ps:cfp-dis1} $(p\cup\{\at{k}\vee\Gamma\})\cup\{\at{k}\gamma\}\in P$ for some $\gamma\in \Gamma$ & 
  by Proposition~\ref{prop:semantic-forcing} (\ref{sf3}) \\
  \label{ps:cfp-dis2} $q\Vdash^k\gamma$ for some $q\geq p\cup \{\at{k} \vee\Gamma\}$ & 
  by the induction hypothesis\\
  \label{ps:cfp-dis3} $q\Vdash^k \vee\Gamma$ for some $q\geq p$ & by Definition~\ref{def:forcing-relation}
 \end{proofsteps}
\item[For $\Exists{X}\gamma$] 
  Assume that $q\Vdash^k \Exists{X}\gamma$ for some $q\geq p$.
  We show that $p\cup\{\at{k}\Exists{X}\gamma\}\in P$:
  \begin{proofsteps}{30em}
   $q\Vdash^k\theta(\gamma)$ for some substitution $\theta : X\to\emptyset$ &
  by Definition~\ref{def:forcing-relation} \\
  
   $p\cup\{\at{k}\theta(\gamma)\}\in P$ & 
   by the induction hypothesis \\ 
  
   $(W,M)\models p\cup \{ \at{k}\theta(\gamma) \}$ for some Kripke structure $(W,M)$ over $\Delta[C]$ such that $(W,M)\red_\Delta\in\mathcal{K}$ & 
   by Definition~\ref{def:semantic-forcing}\\
  
  $(W,M)\models p\cup \{ \at{k}\Exists{X}\gamma \}$  and $(W,M)\red_\Delta\in\mathcal{K}$ & 
   by semantics \\
   
  $p\cup\{\at{k}\Exists{X}\gamma\}\in P$ & by Definition~\ref{def:semantic-forcing}
 \end{proofsteps}

 We assume that $p\cup\{\at{k}\Exists{X}\gamma\}\in P$.
 We show that $q\Vdash^k \Exists{X}\gamma$ for some $q\geq p$:

\begin{proofsteps}{30em}
$(p\cup\{\at{k}\Exists{X}\gamma\})\cup \{\at{k}\chi(\gamma)\}\in P$ \newline
for some signature morphism $\chi\colon \Delta[C,X]\to\Delta[C]$ which preserves $\Delta[C]$ &  
by Proposition~\ref{prop:semantic-forcing} (\ref{sf4})\\
    
$q\Vdash^k \chi(\gamma)$ for some $q\geq p\cup\{\at{k}\Exists{X}\gamma\}$ & 
by the induction hypothesis \\
    
$q\Vdash^k\Exists{X}\gamma$ for some $q\geq p$ & by Definition~\ref{def:forcing-relation}
\end{proofsteps} 
\item[For $\store{z}\gamma$] This case is straightforward, as $\at{k}\store{z}\gamma$ is semantically equivalent to $\at{k}\gamma(z\leftarrow k)$.
\item[For $\at{k'}\gamma$]
This case is straightforward, as $\at{k}\at{k'}\gamma$ is semantically equivalent to $\at{k'}\gamma$.
\end{proofcases}
\psqed\end{proof}
 The following result is a corollary of Theorem~\ref{th:semantic-forcing}. 
 It shows that each generic set of a given semantic forcing property has a reachable model that satisfies all its conditions.
\begin{corollary} 
 \label{cor:semantic-forcing}
 Let $\mathbb{P}=\langle P, \leq, f \rangle $ be the semantic forcing property described in Definition~\ref{def:semantic-forcing}.

Then for each generic set $G$ we have:
 \begin{enumerate}[label=C\arabic*)]
 \item \label{cor:sfp1} $G\Vdash^k\gamma$ for all conditions $p\in G$, sentences $\gamma\in p$ and nominals $k\in F^\nom \cup C_\any $.

 \item There exists a generic structure $(W^G,M^G)$ for $G$ which is reachable and satisfies each condition $p\in G$.
 \end{enumerate}
\end{corollary}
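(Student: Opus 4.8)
The plan is to prove \ref{cor:sfp1} directly from the Semantic Forcing Theorem (Theorem~\ref{th:semantic-forcing}) together with the defining clauses of a generic set (Definition~\ref{def:generic-set}), and then to deduce the second item from \ref{cor:sfp1} by feeding the Generic Model Theorem (Theorem~\ref{GMT}) and the characterisation of reachable Kripke structures (Proposition~\ref{reach-HFOLR}).

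For \ref{cor:sfp1}, I would fix $p\in G$, a sentence $\gamma\in p$, and a nominal $k\in F^\nom\cup C_\any$. By the third clause of Definition~\ref{def:generic-set} there is $r\in G$ with $r\Vdash^k\gamma$ or $r\Vdash^k\neg\gamma$; in the first case $G\Vdash^k\gamma$ and we are done. In the second case I would use the directedness of $G$ (second clause of Definition~\ref{def:generic-set}) to choose $s\in G$ with $s\geq p$ and $s\geq r$. Then $\gamma\in p\subseteq s$, and since $s\in P$ there is a Kripke structure $(W,M)$ with $(W,M)\models s$ and $(W,M)\red_\Delta\in\mathcal{K}$; because $(W,M)$ satisfies $\gamma$ globally it also satisfies $\at{k}\gamma$, and the cardinality bound is preserved, so $s\cup\{\at{k}\gamma\}\in P$. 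The Semantic Forcing Theorem then gives some $t\geq s$ with $t\Vdash^k\gamma$; but $t\geq s\geq r$, so this contradicts $r\Vdash^k\neg\gamma$ by the negation clause of Definition~\ref{def:forcing-relation}. Hence $r\Vdash^k\gamma$, i.e.\ $G\Vdash^k\gamma$.

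For the second item, I would apply Theorem~\ref{GMT} to obtain a generic Kripke structure $(W^G,M^G)$ for $G$ which, since $\Delta[C]$ is non-void, is reachable. To see that it satisfies every $p\in G$, fix $\gamma\in p$ and a world $w\in|W^G|$. By reachability and Proposition~\ref{reach-HFOLR}, $w=W^G_k$ for some nominal $k\in F^\nom\cup C_\any$, so $(W^G,M^G)\models^w\gamma$ iff $(W^G,M^G)\models\at{k}\gamma$ (the value of $\at{k}\gamma$ does not depend on the world and equals that of $\gamma$ at $W^G_k=w$), which by Definition~\ref{def:generic-model} holds iff $G\Vdash^k\gamma$; and $G\Vdash^k\gamma$ holds by \ref{cor:sfp1} since $\gamma\in p\in G$. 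As $w$ and $\gamma$ were arbitrary, $(W^G,M^G)\models p$.

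The step I expect to be the main obstacle is the negation case in \ref{cor:sfp1}: converting the purely syntactic dichotomy ``$r\Vdash^k\gamma$ or $r\Vdash^k\neg\gamma$'' into the desired $G\Vdash^k\gamma$. The crux is to exploit that $\gamma$ already lies inside a condition, so it is semantically realised and, by Theorem~\ref{th:semantic-forcing}, $\at{k}\gamma$ is forced above any common extension of that condition with $r$; this is precisely what is incompatible with $r\Vdash^k\neg\gamma$. Everything else is bookkeeping with the satisfaction condition, the definition of a generic model, and Proposition~\ref{reach-HFOLR}.
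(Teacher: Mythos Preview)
Your proof is correct and follows essentially the same approach as the paper's: for \ref{cor:sfp1} both arguments pick a common upper bound of $p$ and the witness from genericity, observe that $\gamma\in s$ forces $s\cup\{\at{k}\gamma\}\in P$, invoke Theorem~\ref{th:semantic-forcing} to get forcing of $\gamma$ above $s$, and derive a contradiction with $\Vdash^k\neg\gamma$; for the second item both apply Theorem~\ref{GMT}, use reachability to name each world by a nominal, and reduce global satisfaction of $\gamma$ to $G\Vdash^k\gamma$ via \ref{cor:sfp1}. The only cosmetic difference is that the paper frames \ref{cor:sfp1} as a proof by contradiction from the outset and cites Lemma~\ref{lemma:forcing-property}~(\ref{p2}) and~(\ref{p4}) for the final step, whereas you unfold the negation clause directly.
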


\begin{proof}\

\begin{enumerate}[label=C\arabic*)]
 \item Suppose towards a contradiction that $G\not\Vdash^k\gamma$ for some  $p\in G$, $\gamma\in p$ and nominal $k\in F^\nom \cup C_\any$. 
Then:
\begin{proofsteps}{22em}
$q\Vdash^k \neg\gamma$ for some $q\in G$ & since $G$ is generic\\  
  
$r\geq p$ and $r\geq q$ for some $r \in G$ & since $G$ is generic \\
 
$\gamma\in r$ & since $\gamma\in p$ and $r\geq p$\\
  
$r\cup\{\at{k}\gamma\}\in P$ & 
since $r\models\at{k}\gamma$\\
  
$s\Vdash^k \gamma$ for some $s\geq r$ & by Theorem~\ref{th:semantic-forcing} \\
  
$s\Vdash^k\neg\gamma$ 
& by Lemma~\ref{lemma:forcing-property} (\ref{p2}) since $s\geq q$ and $q\Vdash^k\neg\gamma$ \\
  
contradiction & by Lemma~\ref{lemma:forcing-property} (\ref{p4}) since $s\Vdash^k \gamma$ and $s\Vdash^k\neg\gamma$ 
\end{proofsteps}
It follows that  $G\Vdash^k\gamma$ for all $p\in G$, $\gamma\in p$ and nominals $k$.
\item  By Theorem~\ref{GMT}, there exists a generic model $(W^G,M^G)$ for $G$ which is reachable. 
Let $p\in G$, $\gamma\in p$ and $w\in|W^G|$.
Since $(W^G,M^G)$ is reachable, $w$ is the denotation of some nominal $k\in F^\nom \cup C_\any$.
By the first part of the proof, $G\Vdash^k \gamma$.
Since $(W^G,M^G)$ is a model for $G$, $(W^G,M^G)\models \at{k}\gamma$.
Hence, $(W^G,M^G)\models^w\gamma$. As $w$ was arbitrary, we have
$(W^G,M^G)\models\gamma$.
\end{enumerate}
\psqed\end{proof}
\section{Omitting Types Theorem}\label{6}

Let $\Delta=(\Sigma^\nom,\Sigma^\rigid\subseteq \Sigma^\nom)$ be a countable signature.
Let $C = \{C_s\}_{s\in S^\ext}$ be a finite set of new constants of extended rigid sorts. 
We say that a Kripke structure $(W,M)$ over $\Delta$ realizes a set $\Gamma$ of sentences over $\Delta[C]$ iff
there exists an expansion $(V,N)$ of $(W,M)$ to $\Delta[C]$ such that $(V,N)\models \Gamma$.
We say that $(W,M)$ omits $\Gamma$ if $(W,M)$ does not realize $\Gamma$.
We say that a satisfiable set $T$ of sentences over $\Delta$ locally realizes $\Gamma$ if there exists a finite set $p$ of sentences over $\Delta[C]$ such that 
$T\cup p$ is satisfiable, and 
$T\cup p\models \Gamma$.
In the following we generalize these definitions to signatures of any power. 

\begin{definition} [Omitting Types semantically] \label{def:sem-OT}
Assume a signature $\Delta=(\Sigma^\nom,\Sigma^\rigid\subseteq \Sigma^\nom)$, and let $\alpha$ be the power of $\Delta$.
Let $X=\{X_s\}_{s\in S^\ext}$ be a sorted set of variables for $\Delta$ such that $\card(X_s)<\omega$ for all sorts $s\in S^\ext$.
\begin{itemize}
\item A Kripke structure $(W,M)$ over $\Delta$ realizes a type $\Gamma\subseteq \Sen(\Delta[X])$ if 
there exists an expansion $(V,N)$ of $(W,M)$ to $\Delta[X]$ such that $(V,N)\models\Gamma$.
\item A Kripke structure $(W,M)$ over $\Delta$ omits a set $\Gamma$ of $\Delta[X]$-sentences if $(W,M)$ does not realize $\Gamma$.
\end{itemize}
\end{definition}

Classically, $\Gamma$ from Definition~\ref{def:sem-OT} is called a type with free variables $X$.

\begin{definition}[Omitting Types syntactically] \label{def:OTS}
Let $\Delta$ be a signature, and let $\alpha$ be the power of $\Delta$.
Let $X=\{X_s\}_{s\in S^\ext}$ be a sorted set of variables for $\Delta$ such that $X_s$ is finite for all sorts $s\in S^\ext$.
A theory $T\subseteq\Sen(\Delta)$  \emph{$\alpha$-realizes} a type $\Gamma\subseteq\Sen(\Delta[X])$ if there exist 
\begin{itemize}
\item a sorted set $C=\{C_s\}_{s\in S^\ext}$ of new constants for $\Delta$ with $\card(C_s)<\alpha$ for all $s\in S^\ext$,

\item a substitution $\theta : X \to C$, and

\item a set of sentences $p$ over $\Delta[C]$ with $\card(p)<\alpha$,
\end{itemize}
such that $T\cup p$ is satisfiable and $T\cup p\models \theta(\Gamma)$.
We say that $T$ $\alpha$-omits $\Gamma$ if $T$ does not $\alpha$-realize $\Gamma$.
\end{definition}

Notice that the power of any signature is at least $\omega$. 
If $\alpha=\omega$, we say that $T$ locally omits $\Gamma$ instead of $T$ $\alpha$-omits $\Gamma$.
Definition~\ref{def:OTS} is similar to the definition of locally omitting types for first-order logic without equality from \cite{keisler01}.
Our results are applicable to fragments $\L$ without equality.
We give a couple of equivalent descriptions of the omitting types property which can be found in the literature.

\begin{lemma} \label{lemma:loc-ott} \

\begin{enumerate}[label=L\arabic*),ref=L\arabic*]
\item \label{lemma:loc-ott-1} Assume that $\L$ is semantically closed under equality.
\footnote{$\L$ is semantically closed under equality whenever
\begin{enumerate*}[label=(\alph*)]
\item~for any nominal $k$ there exists an $\L$-sentence $\varphi$ such that $(W,M)\models^w\varphi$ iff $w=W_k$ for all Kripke structures $(W,M)$ and all possible worlds $w\in|W|$, and
\item~for any open terms $t_1,t_2\in T_{\overline\Sigma}$  there exists an $\L$-sentence $\varphi$ such that $(W,M)\models^w\varphi$ iff $M_{w,t_1}=M_{w,t_2}$ for all Kripke structures $(W,M)$ and all possible worlds $w\in|W|$.
\end{enumerate*}}
Then $T$ $\alpha$-realizes $\Gamma$ as described in Definition~\ref{def:OTS} iff 
there exist
\begin{enumerate*}
\item a sorted set $C=\{C_s\}_{s\in S^\ext}$ of new constants for $\Delta[X]$ with $\card(C_s)<\alpha$ for all $s\in S^\ext$, and
\item a set of sentences $p$ over $\Delta[C,X]$ with $\card(p)<\alpha$,
\end{enumerate*} 
such that $T\cup p$ is satisfiable and $T\cup p\models \Gamma$.

\item \label{lemma:loc-ott-2} Assume that $\L$ is semantically closed under equality, conjunction and quantifiers.
Then $T$  locally realizes $\Gamma$ iff 
there exists a finite set of $\Delta[X]$-sentences $p$
such that $T\cup p$ is satisfiable and $T\cup p\models \Gamma$.

\item \label{lemma:loc-ott-3} Assume that $\L$ is compact and semantically closed under equality, conjunction and quantifiers.
Then $T$ $\alpha$-realizes $\Gamma$ iff 
there exists a set of $\Delta[X]$-sentences $p$ with $\card(p)<\alpha$ such that $T\cup p$ is satisfiable and $T\cup p\models \Gamma$.

\end{enumerate}
\end{lemma}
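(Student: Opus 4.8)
The three statements are variations on unfolding the definition of "$\alpha$-realizes" (Definition~\ref{def:OTS}) and eliminating the substitution $\theta\colon X\to C$ by trading it against closure properties. Throughout, the key technical device is Lemma~\ref{lemma:h-prop}(5), which says that satisfiability of a theory over a signature expanded by a rigid constant is equivalent to satisfiability of a suitably quantified sentence over the original signature; together with the local satisfaction condition for substitutions (Proposition after Definition~\ref{def:fragment}'s section), this lets us move freely between a constant $c$ and a variable $x$, and between a substitution instance $\theta(\Gamma)$ and $\Gamma$ itself.

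\textbf{Proof of (\ref{lemma:loc-ott-1}).} For the forward direction, assume $T$ $\alpha$-realizes $\Gamma$ via $C$, $\theta\colon X\to C$ and $p$ over $\Delta[C]$ with $T\cup p$ satisfiable and $T\cup p\models\theta(\Gamma)$. The idea is to recover $\Gamma$ (over $\Delta[C,X]$) from $\theta(\Gamma)$ by adding, for each variable $x\in X_s$, the sentence $\Forall{z}\at{z}(x = \theta(x))$, which is available because $\L$ is semantically closed under equality; call this set of sentences $q$. Then $T\cup p\cup q$ is satisfiable over $\Delta[C,X]$ (expand any model of $T\cup p$ by interpreting $x$ as $\theta(x)$), and $T\cup p\cup q\models\Gamma$ since in any model of $q$ the variable $x$ and the constant $\theta(x)$ denote the same element in every world, so $\theta(\Gamma)$ and $\Gamma$ are equisatisfied. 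Taking $p' = p\cup q$ over $\Delta[C,X]$ with $\card(p')<\alpha$ gives the right-hand side. Conversely, given $C$ and $p$ over $\Delta[C,X]$ with $T\cup p$ satisfiable and $T\cup p\models\Gamma$, pick fresh constants $D=\{D_s\}$ with $\card(D_s)=\card(X_s)<\alpha$, fix a bijection $\theta\colon X\to D$, and let $p'=\theta(p)$ over $\Delta[C\cup D]$. By the local satisfaction condition for substitutions, $T\cup p'$ is satisfiable and $T\cup p'\models\theta(\Gamma)$; since $\card(C\cup D)<\alpha$ this witnesses that $T$ $\alpha$-realizes $\Gamma$ in the sense of Definition~\ref{def:OTS}, with constant set $C\cup D$.

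\textbf{Proof of (\ref{lemma:loc-ott-2}) and (\ref{lemma:loc-ott-3}).} Here we additionally remove the auxiliary constants $C$. Given $p$ over $\Delta[C,X]$ as produced by part (\ref{lemma:loc-ott-1}) (with $\alpha=\omega$ for (\ref{lemma:loc-ott-2}), general $\alpha$ for (\ref{lemma:loc-ott-3})), enumerate $C=\{c_1,c_2,\dots\}$ and repeatedly apply Lemma~\ref{lemma:h-prop}(5): each constant $c_i$ occurring in (finitely many sentences of) $p$ can be bound by an existential nominal/rigid quantifier, replacing a sentence $\psi\in\Sen(\Delta[C,X])$ with $\Exists{x_i}\Forall{z}\at{z}\psi$ over the smaller signature, while preserving satisfiability of $T\cup p$ and the entailment $T\cup p\models\Gamma$ (the latter because the quantified sentence is semantically equivalent, relative to satisfiability with $T$, to the original; we use closure under quantifiers here, and closure under conjunction to bundle the finitely many sentences mentioning $c_i$ into one before quantifying). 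For (\ref{lemma:loc-ott-2}), since $p$ is finite only finitely many constants appear, so after finitely many steps we land on a finite set $p'$ of $\Delta[X]$-sentences. For (\ref{lemma:loc-ott-3}) the set $p$ may be infinite; this is exactly where compactness enters: by compactness $T\cup p$ being satisfiable and entailing $\Gamma$ (equivalently each sentence of $\theta(\Gamma)$, and since $\Gamma$ is a type we may reduce to $T\cup p_0\models\Gamma$ for a suitable reformulation) can be reduced to a witness using only $<\alpha$ sentences over $\Delta[X]$ after eliminating $C$ pointwise, since each elimination step is local. The reverse directions of both statements are immediate: a set $p$ of $\Delta[X]$-sentences is in particular a set over $\Delta[C,X]$ for $C=\emptyset$, so part (\ref{lemma:loc-ott-1}) applies directly.

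\textbf{Main obstacle.} The delicate point is the bookkeeping in the constant-elimination step for (\ref{lemma:loc-ott-3}): one must check that quantifying out the constants of $C$ one at a time interacts correctly with the (possibly infinite) entailment $T\cup p\models\Gamma$, and that compactness is invoked at the right moment — before elimination, to replace $p$ by a $<\alpha$-sized subset that still entails each (finitely many variables') sentence of $\Gamma$, since $\Gamma$ itself need not be finite but each of its sentences is. The closure hypotheses (equality, conjunction, quantifiers) must each be used precisely where indicated, and one should be careful that "semantically closed under quantifiers" suffices — we never need the syntactic presence of these operators, only semantically equivalent surrogates, matching the Framework's conventions on semantic closure.
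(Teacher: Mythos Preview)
Your approach is essentially the paper's: for (L1), add the ``graph'' sentences $x=\theta(x)$ to $p$; for (L2), bundle and existentially quantify out $C$; for (L3), use compactness per sentence of $\Gamma$ before quantifying. The one point your sketch leaves unstated is the cardinality bound in (L3): after compactness gives, for each $\gamma\in\Gamma$, a finite $p^\gamma\subseteq p\cup p^\theta$ with $T\cup p^\gamma\models\gamma$, the paper observes that the resulting set $p'=\{\Exists{C^\gamma}\Forall{z}\at{z}\bigwedge p^\gamma\mid\gamma\in\Gamma\}$ has $\card(p')\leq\card(\mathcal{P}_\omega(p\cup p^\theta))<\alpha$, since distinct $\gamma$'s may share the same $p^\gamma$ --- your phrase ``replace $p$ by a $<\alpha$-sized subset'' is vacuous (you already have $\card(p)<\alpha$) and obscures that the real issue is bounding the number of \emph{quantified} sentences over $\Delta[X]$, not the size of $p$ itself.
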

\begin{proof} 
The backward implication is straightforward for all cases.
Therefore, we will focus on the forward implication.
$$\xymatrix{
&  & \Delta[C,X]& & p^\theta \ar@{.}[ll]\\
\Gamma \ar@{.}[r]& \Delta[X] \ar@{^{(}->}[ur]\ar@{.>}[rr]^\theta && \Delta[C] \ar@{_{(}->}[ul] & \ar@{.}[l] p\\
& & \Delta \ar@{_{(}->}[ul] \ar@{^{(}->}[ur] & & T \ar@{.}[ll]
}$$ 

Let $\theta\colon X\to C$ be a substitution with $\card(C_s)<\alpha$ for all $s\in S^\ext$, and 
let $p$ be a set of sentences over $\Delta[C]$ with $\card(p)<\alpha$ such that $T\cup p$ is satisfiable and $T\cup p\models \theta(\Gamma)$.
Without loss of generality, we assume that $X\cap C=\emptyset$. 
Since in all three cases $\L$ is semantically closed under equality, there exists a set of sentences $p^\theta$ over $\Delta[C,X]$ semantically equivalent with $\{ x=\theta(x)\mid x\in X\}$
\footnote{Here $=$ is a shorthand from the metalanguage. In particular, for nominals
$x=\theta(x)$ means that $\at{x}\theta(x)$ for all $x\in X_\any$.}.
Since $T\cup p$ is satisfiable,
$T\cup p\cup p^\theta$ is satisfiable too. Now we consider three cases.  
\begin{enumerate}[label=L\arabic*)]
\item As $p\cup p^\theta$ is a set of sentences over $\Delta[C,X]$), we show
that $T\cup p\cup p^\theta\models \Gamma$: 
%
\begin{proofsteps}{24em}
let $(W,M)\in|\Mod(\Delta[C,X])|$ such that $(W,M)\models T\cup p \cup p^\theta$ & \\
$(W,M)\red_{\Delta[C]}\red_\theta=(W,M)\red_{\Delta[X]}$ & since $(W,M)\models p^\theta$ \\
$(W,M)\red_{\Delta[C]}\models T\cup p$ & by the satisfaction condition, since $(W,M)\models T\cup p$ \\
$(W,M)\red_{\Delta[C]}\models \theta(\Gamma)$ & since $T\cup p\models \theta(\Gamma)$ and $(W,M)\red_{\Delta[C]}\models T\cup p$ \\
$(W,M)\red_{\Delta[X]}=(W,M)\red_{\Delta[C]}\red_\theta\models\Gamma$ & by the satisfaction condition for substitutions\\
$(W,M)\models \Gamma$ & by the satisfaction condition\\
$T\cup p\cup p^\theta\models\Gamma$ & since $(W,M)$ was arbitrarily chosen
\end{proofsteps}
\item If $\alpha=\omega$, we show $T\cup\{\varphi\}\models \Gamma$ for
  a single sentence $\varphi$ over $\Delta[X]$:
\begin{proofsteps}{24em}
the sets $C$, $p$ and $p^\theta$ are finite & since their cardinals are strictly less than $\omega$\\
there exists a $\Delta[X]$-sentence $\varphi$ semantically equivalent with $\Exists{C}\Forall{z}\at{z}\wedge (p\cup p^\theta)$ & 
since $\L$ is semantically closed under conjunction and quantifiers\\
$T \cup \{\varphi\}$
is satisfiable over $\Delta[X]$ & since $T\cup p \cup p^\theta$ is satisfiable over $\Delta[C,X]$\\
%
$T\cup\{\varphi\}\models \Gamma$ &  since $T\cup p \cup p^\theta\models\Gamma$
\end{proofsteps}
\item If $\L$ is compact, we show that $T\cup p'\models\Gamma$ for
a set $p'$ of sentences over $\Delta[X]$ with $\card(p')<\alpha$:  
\begin{proofsteps}{27em}
for each $\gamma\in \Gamma$  there exists $p^\gamma\subseteq p\cup p^\theta$ finite such that $T\cup p^\gamma\models\gamma$ & 
by compactness, since $T\cup p \cup p^\theta\models \gamma$ for all $\gamma\in \Gamma$\\
let $C^\gamma$ be all constants from $C$ which occur in $p^\gamma$ for all $\gamma\in \Gamma$ & \\
there exists a set $p'$ of $\Delta[X]$-sentences semantically equivalent with
$\{\Exists{C^\gamma}\Forall{z}\at{z}\wedge p^\gamma \mid \gamma\in\Gamma\}$  & 
since $\L$ is semantically closed under conjunction, retrieve and quantifiers\\
$T\cup p'$
is satisfiable over $\Delta[C]$ &
since $T\cup p\cup p^\theta$ is satisfiable over $\Delta[C,X]$ \\
$T\cup p'\models\Gamma$  & 
since $T\cup p^\gamma\models \gamma$ for all $\gamma\in\Gamma$\\
$\card(\mathcal{P}_\omega(p\cup p^\theta))<\alpha$ &
since $\card(p)<\alpha$ and $\card(p^\theta)<\alpha$\\
$\card(\{p^\gamma\mid \gamma\in \Gamma\})<\alpha$ & 
since $\{p^\gamma\mid \gamma\in \Gamma\}\subseteq \mathcal{P}_\omega(p\cup p^\theta)$\\
$\card(p')<\alpha$ &  by its definition, $p'$ is in one-to-one
correspondence with $\{p^\gamma \mid \gamma\in\Gamma\}$
\end{proofsteps}
\end{enumerate}
\psqed\end{proof} 

The following result is needed for proving the Omitting Types Theorem.
\begin{lemma} \label{lemma:a-ott}
Assume that $T$ $\alpha$-omits $\Gamma$ as described in Definition~\ref{def:OTS}.
Then for any substitution $\theta : X \to C$ over $\Delta$ such that $\card(C_s)<\alpha$ for all $s\in S^\ext$, and 
any set of $\Delta[C]$-sentences $p$ such that $\card(p)<\alpha$ and $T\cup p$ is satisfiable,
there exists $\gamma\in \Gamma$ such that 
$T\cup p \cup \{\at{z}\neg\theta(\gamma)\}$ is satisfiable, where $z$ is a nominal variable for $\Delta[C]$.
\end{lemma}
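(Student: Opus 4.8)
The statement is the contrapositive-flavoured "local omitting" step: if $T$ $\alpha$-omits $\Gamma$, then no finite piece of data $(\theta, p)$ can force all of $\theta(\Gamma)$, so for every such $(\theta, p)$ with $T \cup p$ satisfiable there is a $\gamma \in \Gamma$ whose translation $\theta(\gamma)$ is "locally refutable" on top of $T \cup p$, i.e. $T \cup p \cup \{\at{z}\neg\theta(\gamma)\}$ is satisfiable. The plan is to argue by contradiction: assume $T$ $\alpha$-omits $\Gamma$ but the conclusion fails for some substitution $\theta \colon X \to C$ with $\card(C_s) < \alpha$ and some $p$ over $\Delta[C]$ with $\card(p) < \alpha$ and $T \cup p$ satisfiable. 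Failure of the conclusion means: for \emph{every} $\gamma \in \Gamma$, the set $T \cup p \cup \{\at{z}\neg\theta(\gamma)\}$ is \emph{unsatisfiable}, where $z$ is a fresh nominal variable for $\Delta[C]$.

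The key step is to translate "$T \cup p \cup \{\at{z}\neg\theta(\gamma)\}$ unsatisfiable" into "$T \cup p \models \theta(\gamma)$" and then invoke the definition of $\alpha$-realizes. Here I would use Lemma~\ref{lemma:h-prop}: part~(4) (or rather its relativized form with a fresh nominal) tells us that $T \cup p \cup \{\at{z}\neg\theta(\gamma)\}$ being unsatisfiable over $\Delta[C,z]$ means $T \cup p \models_{\Delta[C,z]} \bot$ fails only if... more precisely, $T\cup p$ is satisfiable, so unsatisfiability of $T \cup p \cup \{\at{z}\neg\theta(\gamma)\}$ forces that in \emph{every} model $(W,M)$ of $T \cup p$ and for every choice of world $w$ to interpret $z$, we have $(W,M) \not\models^w \neg\theta(\gamma)$, i.e. $(W,M) \models^w \theta(\gamma)$ for all $w$, i.e. $(W,M) \models \theta(\gamma)$. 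Since $(W,M)$ ranges over all models of $T \cup p$, this is exactly $T \cup p \models \theta(\gamma)$. Doing this for every $\gamma \in \Gamma$ gives $T \cup p \models \theta(\Gamma)$. But $\theta \colon X \to C$ is a substitution with $\card(C_s) < \alpha$, $p$ is a set of $\Delta[C]$-sentences with $\card(p) < \alpha$, and $T \cup p$ is satisfiable — so by Definition~\ref{def:OTS}, $T$ $\alpha$-realizes $\Gamma$, contradicting the hypothesis.

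Concretely, I would write it as a short chain: fix $\theta$ and $p$ as in the statement with $T \cup p$ satisfiable, and suppose for contradiction that for all $\gamma \in \Gamma$ the set $T \cup p \cup \{\at{z}\neg\theta(\gamma)\}$ is unsatisfiable over $\Delta[C,z]$ (where $z$ is fresh). Take an arbitrary $\gamma \in \Gamma$ and an arbitrary $(W,M) \models T \cup p$ over $\Delta[C]$; for any world $w \in |W|$, let $(W^{z \leftarrow w}, M)$ be the $\Delta[C,z]$-expansion interpreting $z$ as $w$. This expansion still satisfies $T \cup p$ (those sentences do not mention $z$), so by our unsatisfiability assumption it cannot satisfy $\at{z}\neg\theta(\gamma)$; by the semantics of $\at{z}$ and $\neg$, this means $(W^{z\leftarrow w}, M) \models^w \theta(\gamma)$, hence $(W,M) \models^w \theta(\gamma)$. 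As $w$ was arbitrary, $(W,M) \models \theta(\gamma)$, and as $(W,M)$ was an arbitrary model of $T \cup p$, we get $T \cup p \models \theta(\gamma)$. Ranging over $\gamma \in \Gamma$ yields $T \cup p \models \theta(\Gamma)$, so $T$ $\alpha$-realizes $\Gamma$ — contradiction. Therefore some $\gamma \in \Gamma$ has $T \cup p \cup \{\at{z}\neg\theta(\gamma)\}$ satisfiable.

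**Main obstacle.** There is little genuine difficulty here; the argument is essentially bookkeeping with the satisfaction condition for signature inclusions (Proposition~\ref{prop:sat-cond}) and the semantics of $\at{z}$ and $\neg$. The only point requiring care is the handling of the fresh nominal variable $z$: one must check that $z \notin C$ (it is a \emph{new} nominal variable for $\Delta[C]$, so this is by hypothesis), that expanding a model of $T \cup p$ by interpreting $z$ arbitrarily preserves satisfaction of $T \cup p$, and that "unsatisfiable" in the statement is read over the signature $\Delta[C,z]$ that contains $z$. A secondary subtlety is that $\theta(\gamma)$ is a sentence over $\Delta[C]$ (the substitution $\theta \colon X \to C$ sends $\Delta[X]$-sentences to $\Delta[C]$-sentences, by the definition of substitution in Section~\ref{sec:subst}), so $\at{z}\neg\theta(\gamma)$ is legitimately a $\Delta[C,z]$-sentence — worth a parenthetical remark but no more.
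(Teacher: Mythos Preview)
Your proposal is correct and follows essentially the same approach as the paper, just phrased as a proof by contradiction rather than a direct argument: the paper observes directly that since $T$ $\alpha$-omits $\Gamma$ we have $T\cup p\not\models\theta(\Gamma)$, picks a witnessing model $(W,M)\models T\cup p$ with $(W,M)\models^w\neg\theta(\gamma)$ for some $w$ and $\gamma$, and expands by interpreting $z$ as $w$. Your contrapositive unwinding arrives at the same model-expansion step via the same semantic observations about $\at{z}$ and fresh nominals; the difference is purely stylistic.
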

\begin{proof}
Let $C=\{C_s\}_{s\in S^\ext}$ be a set of new constants for $\Delta$ with $\card(C_s)<\alpha$ for all $s\in S^\ext$.
Let $\theta : X \to C$ be a substitution over $\Delta$.
Let $p$ be a set of $\Delta[C]$-sentences such that $\card(p)<\alpha$ and $T\cup p$ satisfiable.
Since $T$ $\alpha$-omits $\Gamma$, we have $T\cup p\not \models \theta(\Gamma)$. 
There exists a Kripke structure $(W,M)$ over $\Delta[C]$ such that $(W,M)\models T\cup p$ and $(W,M)\not\models\theta(\Gamma)$.
It follows that $(W,M)\models^w \neg\theta(\gamma)$ for some possible world $w\in |W|$ and some sentence $\gamma\in \Gamma$.
Let $z$ be a new nominal for $\Delta[C]$, and let $(W^{z\leftarrow w},M)$ be the unique expansion of $(W,M)$ to $\Delta[z,C]$ which interprets $z$ as $w$.
Since $(W,M)\models^w\neg\theta(\gamma)$, we get $(W^{z\leftarrow w},M)\models \at{z}\neg\theta(\gamma)$. 
Hence, $T\cup p \cup\{\at{z}\neg\theta(\gamma)\}$ is satisfiable.
\end{proof}
\begin{definition}[Omitting Types Property] \label{def:OTP}
We say that $\L$ has $\alpha$-Omitting Types Property ($\alpha$-OTP), where $\alpha$ is an infinite cardinal, whenever
\begin{itemize}
\item for all signatures $\Delta$ of power at most $\alpha$,
\item all satisfiable theories $T\subseteq \Sen(\Delta)$, and
\item all families of types $\{\Gamma^i\subseteq \Sen(\Delta[X^i]) \mid i<\alpha \}$,

where $X^i=\{X^i_s\}_{s\in S^\ext}$ is a set of variables for $\Delta$ with $\card(X^i_s)<\omega$ for all $s\in S^\ext$,
\end{itemize}
such that $T$ $\alpha$-omits $\Gamma^i$ for all $i<\alpha$,
there exists a Kripke structure over $\Delta$ which satisfies $T$ and omits $\Gamma^i$ for all $i<\alpha$.
If for all signatures $\Delta\in |\Sig^\L|$ and all cardinals $\alpha$ equal or greater than the power of $\Delta$, 
$\L$ has $\alpha$-OTP then $\L$ has OTP. 
\end{definition}
All the ingredients for proving Omitting Types Theorem are in place.
\begin{theorem}[Extended Omitting Types Theorem]\label{th:eott}
Let $\alpha$ be an infinite cardinal.  
Assume that $\L$ is semantically closed under retrieve and negation, and
if $\alpha > \omega$ assume that $\L$ is compact.
Then $\L$ has $\alpha$-OTP.
\end{theorem}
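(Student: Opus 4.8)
The plan is to apply the semantic forcing machinery of Section~\ref{5} to a well chosen class of Kripke structures, and then to build, by transfinite recursion, a generic set whose generic model simultaneously satisfies $T$ and omits every $\Gamma^i$. Fix a signature $\Delta$ of power at most $\alpha$, a satisfiable theory $T\subseteq\Sen(\Delta)$, and a family of types $\{\Gamma^i\subseteq\Sen(\Delta[X^i])\mid i<\alpha\}$ with each $X^i$ finite in every rigid sort, such that $T$ $\alpha$-omits each $\Gamma^i$ (we may assume $\Delta$ is non-void, since sorts not inhabited by terms play no role in satisfaction). Put $\mathcal{K}\coloneqq\{(W,M)\in|\Mod(\Delta)|\mid(W,M)\models T\}$, which is non-empty, and let $C=\{C_s\}_{s\in S^\ext}$ be a sorted set of new rigid constants with $\card(C_s)=\alpha$; then $\Delta[C]$ is non-void of power $\alpha$, and $\Delta$, $\mathcal{K}$, $C$ instantiate Framework~\ref{frame:context}. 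Let $\mathbb{P}=\langle P,\le,f\rangle$ be the semantic forcing property over $\Delta[C]$ relative to $\mathcal{K}$ of Definition~\ref{def:semantic-forcing}; it is a forcing property by Lemma~\ref{lemma:semantic forcing}, and $\card(\Sen(\Delta[C]))=\alpha$.

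Next I would run a transfinite recursion of length $\alpha$ producing an increasing chain $(p_\xi)_{\xi<\alpha}$ in $P$, starting from $p_0=\emptyset$. Fix an enumeration in order type $\alpha$ of all tasks of the following three kinds; there are at most $\alpha$ of them, using $\card(\Sen(\Delta[C]))=\alpha$ and $\alpha^{<\omega}=\alpha$: (a) each sentence $\at{k}\gamma\in\Sen(\Delta[C])$; (b) each sentence $\varphi\in T$; (c) each pair $(i,\theta)$ with $i<\alpha$ and $\theta\colon X^i\to C$ a substitution over $\Delta[C]$. At successor stage $\xi{+}1$ handling a task~(a): if some $q\ge p_\xi$ has $q\Vdash^k\gamma$ set $p_{\xi+1}\coloneqq q$, otherwise set $p_{\xi+1}\coloneqq p_\xi$ (so that $p_\xi\Vdash^k\neg\gamma$), securing clause~3 of Definition~\ref{def:generic-set}; for task~(b) set $p_{\xi+1}\coloneqq p_\xi\cup\{\varphi\}$, which lies in $P$ because any $\mathcal{K}$-witness of $p_\xi$ already satisfies $\varphi$; for task~(c), the finitely many constants occurring in $\theta$ together with those occurring in $p_\xi$ form a set $C'\subset C$ with $\card(C'_s)<\alpha$, so Lemma~\ref{lemma:a-ott} (applicable since $T$ $\alpha$-omits $\Gamma^i$) yields $\gamma\in\Gamma^i$ with $T\cup p_\xi\cup\{\at{z}\neg\theta(\gamma)\}$ satisfiable, and renaming $z$ to a nominal $c\in C_\any\setminus C'_\any$ lets us set $p_{\xi+1}\coloneqq p_\xi\cup\{\at{c}\neg\theta(\gamma)\}\in P$. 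At a limit stage $\lambda<\alpha$ put $p_\lambda\coloneqq\bigcup_{\xi<\lambda}p_\xi$: its cardinality stays below $\alpha$ (using regularity of $\alpha$), and $T\cup p_\lambda$ is satisfiable since each of its finite subsets is contained in some $T\cup p_\xi$ — this is the sole use of compactness of $\L$, and it is vacuous when $\alpha=\omega$ because no limit stage below $\omega$ is reached, which is exactly why the countable case needs no compactness hypothesis. Finally set $G\coloneqq\{r\in P\mid r\le p_\xi\text{ for some }\xi<\alpha\}$; it is generic, being downward closed, directed (the $p_\xi$ form a chain), and, by the tasks~(a), satisfying clause~3 of Definition~\ref{def:generic-set}.

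It remains to extract the model. By Theorem~\ref{GMT} and Corollary~\ref{cor:semantic-forcing} the generic set $G$ has a reachable generic Kripke structure $(W^G,M^G)$ over $\Delta[C]$ that satisfies every condition in $G$. In particular $(W^G,M^G)\models\varphi$ for all $\varphi\in T$ (task~(b)), hence $(W^G,M^G)\red_\Delta\models T$ by the satisfaction condition. For the omitting part, take any expansion $(V,N)$ of $(W^G,M^G)\red_\Delta$ to $\Delta[X^i]$; by reachability (Proposition~\ref{reach-HFOLR}) every world of $(W^G,M^G)$ is named by a nominal of $\Delta[C]$ and every rigid element is the denotation of a rigid term over $\Delta[C]$, so $(V,N)=(W^G,M^G)\red_\theta$ for some substitution $\theta\colon X^i\to C$ over $\Delta[C]$. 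The task $(i,\theta)$ placed $\at{c}\neg\theta(\gamma)$ in a condition of $G$ for some $\gamma\in\Gamma^i$, so $(W^G,M^G)\models\at{c}\neg\theta(\gamma)$, and the local satisfaction condition for substitutions gives $(V,N)=(W^G,M^G)\red_\theta\not\models^{W^G_c}\gamma$, whence $(V,N)\not\models\Gamma^i$. Since $(V,N)$ was arbitrary, $(W^G,M^G)\red_\Delta$ omits $\Gamma^i$ for every $i<\alpha$, so $(W^G,M^G)\red_\Delta$ witnesses $\alpha$-OTP.

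The step I expect to be the main obstacle is the limit stage of the recursion: one must keep the accumulated condition of cardinality below $\alpha$ and, crucially, preserve the existence of a $\mathcal{K}$-model of it, and it is the latter requirement that forces the compactness hypothesis when $\alpha>\omega$ (and whose absence for $\alpha=\omega$ is what makes that case unconditional). A secondary point to handle with care is verifying that the tasks~(c), indexed by all substitutions $X^i\to C$ over $\Delta[C]$, really exhaust all expansions of the reduct of the generic model; this rests on the reachability guaranteed by Theorem~\ref{GMT} together with the non-voidness of $\Delta[C]$.
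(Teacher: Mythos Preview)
Your proposal is correct and follows essentially the same strategy as the paper's proof: the semantic forcing property relative to $\mathcal{K}=|\Mod(\Delta,T)|$, a transfinite chain of conditions handling genericity and omitting, then extraction of the reachable generic model via Theorem~\ref{GMT} and Corollary~\ref{cor:semantic-forcing}. The paper interleaves the genericity and omitting steps at each successor stage and derives $(W,M)\models T$ afterwards via Theorem~\ref{th:semantic-forcing} rather than by your task~(b), but these differences are purely organizational.

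Two minor points deserve attention. First, you justify $\card(p_\lambda)<\alpha$ at limit stages by ``regularity of $\alpha$'', but the theorem does not assume $\alpha$ regular; the correct reason is that each successor step can be arranged to add only finitely many sentences (for task~(a), the witness $q$ produced in the proof of Theorem~\ref{th:semantic-forcing} may always be taken of the form $p_\xi$ plus a finite set), whence $\card(p_\xi)\le|\xi|+\omega<\alpha$ for all $\xi<\alpha$. The paper's own phrasing of this step is equally loose. Second, the substitutions in task~(c) should be written $\theta\colon X^i\to\emptyset$ over $\Delta[C]$ (targets are rigid \emph{terms} over $\Delta[C]$, not merely constants in $C$), which is exactly what reachability of $(W^G,M^G)$ delivers in your final paragraph; your phrase ``the finitely many constants occurring in $\theta$'' is then still correct, since each such term involves only finitely many constants from $C$.
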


\begin{proof}
Assume that $T$ $\alpha$-omits $\Gamma^i$ as described in Definition~\ref{def:OTS}.
Let $C=\{C_s\}_{s\in S^\ext}$ be a sorted set of new constants for $\Delta$ such that $\card(C_s)=\alpha$ for all $s\in S^\ext$.
Let $\mathbb{P}=(P,\leq,f)$ be the semantic forcing property described in Definition~\ref{def:semantic-forcing} with $\mathcal{K}=|\Mod(\Delta,T)|$. 
The proof is performed in four steps.
$$\xymatrix{ & \Delta[X^i] \ar@{^{(}->}[rr] \ar@{.>}[d]^{\theta'} & & \Delta[C,X^i] \ar@{.>}[d]^\theta \\
            \Delta \ar@{^{(}->}[ur] \ar@{^{(}->}[r] & \Delta[C'] \ar@{^{(}->}[r] & \Delta[C] \ar@{^{(}->}[ur]
            \ar[r]_{1_{\Delta[C]}}& \Delta[C]
}$$
\begin{enumerate}[label=S\arabic*)]
\item We show that for 
any condition $p\in P$,
any index $i<\alpha$, and 
any substitution $\theta : X^i \to \emptyset$ over $\Delta[C]$,
there exist a sentence $\gamma\in\Gamma^i$ and a nominal $c\in C$ such that $p\cup \{\at{c}\neg\theta(\gamma)\}\in P$:
\begin{proofsteps}{23em}
let $C^p$ be the set of all constants from $C$ which occur in $p$ & \\
there exists $c\in C_\any\setminus (\theta(X^i_\any) \cup C^p_\any)$ & 
since $\card(\theta(X^i_\any))<\omega$, $\card(C^p_\any)<\alpha$ and $\card(C_\any)=\alpha$\\
let $C'\coloneqq\theta(X^i)\cup C^p \cup\{c\}$  & \\
let $\theta' : X^i \to C'$ be the substitution over $\Delta$ defined by $\theta(x)=\theta'(x)$ for all $x\in X^i$ &\\
$T\cup p\cup\{\at{c}\neg\theta'(\gamma)\}$ is satisfiable for some $\gamma\in\Gamma^i$ 
& by Lemma~\ref{lemma:a-ott}, since $T$ $\alpha$-omits $\Gamma^i$\\
$p\cup\{\at{c}\neg\theta(\gamma)\}\in P$ &  
since $\at{c}\neg\theta(\gamma)=\at{c}\neg\theta'(\gamma)$,
we have $(W,M)\models T \cup p \cup\{\at{c}\neg\theta(\gamma)\}$ for some $(W,M)\in|\Mod(\Delta[C])|$

\end{proofsteps}
\item The cardinality of the set $\mathcal{S}^i$ of all substitutions $\theta : X^i \to \emptyset$ over $\Delta[C]$ is equal or less than $\alpha$.
It follows that the cardinality of $\mathcal{S}\coloneqq \bigcup_{i<\alpha}\mathcal{S}^i$ is equal or less than $\alpha$.
Let $\{\theta^j : X^{i_j} \to \emptyset\in\mathcal{S} \mid j < \alpha\}$ be an enumeration of $\mathcal{S}$.
Let $\{\at{k_j}\varphi_j\in \Sen(\Delta[C]) \mid j < \alpha\}$ be an enumeration of the $\Delta[C]$-sentences with retrieve as the top operator.
We define an increasing chain of conditions $p_0\leq p_1\leq\dots$ by induction on ordinals:
\begin{proofcases}[itemsep=1.5ex]
\item[$j=0$] $p_0\coloneqq\emptyset$.
\item[$j\Rightarrow j+1$] 
If $p_j \Vdash^{k_j}\neg\varphi_j$ then let $q\coloneqq p_j$ else let $q\geq p_j$ be a condition such that $q\Vdash^{k_j}\varphi_j$.
By the first part of the proof, 
there exist $\gamma\in\Gamma^{i_j}$ and $c\in C$ such that $q\cup\{ \at{c}\neg \theta^j(\gamma)\}\in P$.
Let $p_{j+1}\coloneqq q\cup\{ \at{c}\neg \theta^j(\gamma)\}$.
\item[$\beta<\alpha$ limit ordinal] $p_\beta\coloneqq\bigcup_{j<\beta}p_j$.
Since $\card(p_j)<\alpha$ for all $j<\beta$ and $\beta<\alpha$, we have $\card(p_\beta)<\alpha$.
Since $p_j\in P$ for all $j<\beta$, the set $T\cup p_j$ is satisfiable for all $j<\beta$.
By compactness\footnote{If there exists a limit ordinal $\beta<\alpha$ then $\alpha$ is not
  countable, so we assume $\L$ is compact.}, $(\bigcup_{j<\beta} p_j)\cup T$ is satisfiable too.
Hence, $p_\beta\in P$.

\end{proofcases}
The set $G=\{q\in P\mid q\leq p_{j+1} \text{ for some } j<\alpha\}$ is generic.
Let $k\in F^\nom \cup C_\any $ and $\psi\in T$.
Suppose towards a contradiction that $q\Vdash^k\neg \psi$ for some $q\in G$ then:
\begin{proofsteps}{28em}
$q\cup \{\at{k}\psi\}\in P$ & since $\psi\in T$ and $q\cup T$ is satisfiable\\
\label{ps:ott-2} $r\Vdash^k\psi$ for some $r\geq q$ & by Theorem~\ref{th:semantic-forcing}\\
\label{ps:ott-3} $r\Vdash^k\neg\psi$ & since $q\Vdash^k\neg\psi$ and $r\geq q$\\
contradiction & by Lemma~\ref{lemma:forcing-property} (\ref{p4}) from \ref{ps:ott-2} and \ref{ps:ott-3} 
\end{proofsteps}
Since $G$ is generic, $q\Vdash^k\psi$ for some $q\in G$.
\item By Theorem~\ref{GMT}, there exists a generic Kripke structure $(W,M)$ for $G$ that is reachable.
Let $(V,N)\coloneqq (W,M)\red_\Delta$.
We show that $(V,N)\models T$:
\begin{proofsteps}{21em}
let $w\in|W|$ and $\psi\in T$ & \\
$W_k=w$ for some $k\in  F^\nom \cup C_\any $ & since $(W,M)$ is reachable \\
$G\Vdash^k\psi$ & by the second part of the proof \\
$(W,M)\models \at{k}\psi$ & since $(W,M)$ is generic for $G$ \\
$(W,M)\models^w \psi$ & by the semantics of $@$, since $W_k=w$\\
$(W,M)\models T$ & since $w\in |W|$ and $\psi\in T$ were arbitrarily chosen \\
$(V,N)\models T$ & by the satisfaction condition, since $(W,M)\red_\Delta=(V,N)$
\end{proofsteps}
\item We show that $(V,N)$ omits $\Gamma^i$ for all $i<\alpha$:
\begin{proofsteps}{27.5em}
let $(V',N')$ be an arbitrary expansion of $(V,N)$ to $\Delta[X^i]$ & \\
there exists an expansion $(W',M')$ of $(W,M)$ to $\Delta[C,X^i]$ such that $(W',M')\red_{\Delta[X^i]}=(V',N')$ &
by interpreting $X^i$ as $(V',N')$ interprets $X^i$\\
there exists $\theta^j : X^i \to \emptyset \in\mathcal{S}$ such that $(W,M)\red_{\theta^j}=(W',M')$ 
& since $(W,M)$ is reachable\\ 
there exist $c\in C$ and $\gamma\in \Gamma^i$ such that $\at{c}\neg\theta^j(\gamma) \in p_{j+1}$ 
& by the construction of the chain $p_0\leq p_1\leq \dots$ \\
$(W,M)\models p_{j+1}$ & by Corollary~\ref{cor:semantic-forcing}, since $p_{j+1}\in G$  and $(W,M)$ is generic for $G$\\
$(W,M)\models^w\neg \theta^j(\gamma)$, where $w=W_{c}$ & since $\at{c}\neg\theta^j(\gamma)\in p_{j+1}$  \\
$(W',M')\models^w \neg \gamma$ & by the local satisfaction condition for $\theta^j$ \\
$(V',N')\models^w \neg \gamma$ & by the local satisfaction condition, since $(W',M')\red_{\Delta[X^i]}=(V',N')$\\
$(V',N')\not\models \Gamma^i$ & since $\gamma\in\Gamma^i$\\
$(V,N)$ omits $\Gamma^i$ & since $(V',N')$ is an arbitrary expansion of $(V,N)$
\end{proofsteps}
\end{enumerate}
We conclude that $(V,N)$ is a Kripke structure over $\Delta$ which satisfies $T$ and omits $\Gamma^i$ for all $i<\alpha$.
\end{proof}
Any fragment $\L$ of $\HDFOLR$ free of the Kleene operator is compact.
If, in addition, $\L$ is semantically closed under negation and retrieve, $\L$ is an instance of Theorem~\ref{th:eott}. 
In particular, any fragment presented in Examples~\ref{ex:HFOLR} --- \ref{ex:HFOLS} can be an instance of $\L$ from Theorem~\ref{th:eott}.
Omitting Types Theorem is obtained from Theorem~\ref{th:eott} by restricting the signatures $\Delta$ to countable ones.
By Lemma~\ref{lemma:loc-ott} (\ref{lemma:loc-ott-2}), Omitting Types Theorem is a corollary of Extended Omitting Types Theorem.

Notice that the forcing technique developed in the present contribution is not applicable to $\HFOLS$ as this logic lacks support for the substitutions described in Section~\ref{sec:subst}.
However, by Lemma~\ref{lemma:HFOLR-HFOLS}, OTP can be borrowed from $\HFOLR$ to $\HFOLS$.
\begin{theorem} \label{th:eott-HFOLS} 
$\HFOLS$ has $\alpha$-OTP for all infinite cardinals $\alpha$.
\end{theorem}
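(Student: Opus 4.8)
The plan is to derive Theorem~\ref{th:eott-HFOLS} from the Extended Omitting Types Theorem (Theorem~\ref{th:eott}) applied to $\HFOLR$, and then to transport the conclusion to $\HFOLS$ using Lemma~\ref{lemma:HFOLR-HFOLS}. First I would check that $\HFOLR$ is an admissible instance of Theorem~\ref{th:eott}: by Example~\ref{ex:HFOLR} it is a fragment of $\HDFOLR$, it is (syntactically, hence semantically) closed under negation and retrieve since $\lnot$ and $@_k$ are among its sentence-building operators, and it is compact. Consequently $\HFOLR$ has $\alpha$-OTP for every infinite cardinal $\alpha$ --- compactness covers the case $\alpha>\omega$, and it is not needed when $\alpha=\omega$.

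Next I would record the elementary but crucial observation that $\HFOLS$ and $\HFOLR$ have the same category of signatures and the same model functor, with the same local and global satisfaction relations, and that $\Sen^\HFOLS(\Delta)\subseteq\Sen^\HFOLR(\Delta)$ for every signature $\Delta$. Hence the notions ``$T$ is satisfiable'' and ``$(W,M)$ realizes (resp.\ omits) a type $\Gamma$'' in the sense of Definition~\ref{def:sem-OT} are insensitive to whether the sentences involved are viewed inside $\HFOLS$ or inside $\HFOLR$. In particular, a satisfiable $\HFOLS$-theory $T$ and a family of $\HFOLS$-types $\{\Gamma^i\}_{i<\alpha}$ are, verbatim, a satisfiable $\HFOLR$-theory and a family of $\HFOLR$-types, and any Kripke structure satisfying $T$ and omitting all $\Gamma^i$ in $\HFOLR$ does the same in $\HFOLS$.

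The one step that requires actual work is to show that, for a countable-or-larger signature $\Delta$ and a type $\Gamma^i\subseteq\Sen^\HFOLS(\Delta[X^i])$, the hypothesis ``$T$ $\alpha$-omits $\Gamma^i$'' in the sense of Definition~\ref{def:OTS} for $\HFOLS$ implies the same statement for $\HFOLR$. I would argue by contraposition: suppose $T$ $\alpha$-realizes $\Gamma^i$ in $\HFOLR$, witnessed by a set of new constants $C$, a substitution $\theta\colon X^i\to C$, and a set $p\subseteq\Sen^\HFOLR(\Delta[C])$ with $\card(p)<\alpha$, such that $T\cup p$ is satisfiable and $T\cup p\models\theta(\Gamma^i)$. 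Applying Lemma~\ref{lemma:HFOLR-HFOLS} over the signature $\Delta[C]$, replace each $\psi\in p$ by a semantically equivalent $\HFOLS$-sentence $\psi'\in\Sen^\HFOLS(\Delta[C])$ (the equivalence is at every world, hence also global), and set $p'=\{\psi'\mid\psi\in p\}$. Then $p$ and $p'$ have the same models, so $\card(p')\le\card(p)<\alpha$, $T\cup p'$ is satisfiable, and $T\cup p'\models\theta(\Gamma^i)$ with the very same $\theta$ and $\theta(\Gamma^i)$; thus $T$ $\alpha$-realizes $\Gamma^i$ in $\HFOLS$, which is the desired contradiction. Combining this with the first two paragraphs: given a signature $\Delta$ of power at most $\alpha$, a satisfiable $T\subseteq\Sen^\HFOLS(\Delta)$ and a family $\{\Gamma^i\subseteq\Sen^\HFOLS(\Delta[X^i])\mid i<\alpha\}$ each $\alpha$-omitted by $T$, the theory $T$ (viewed in $\HFOLR$) is satisfiable and $\alpha$-omits every $\Gamma^i$, so the $\alpha$-OTP for $\HFOLR$ produces a Kripke structure over $\Delta$ satisfying $T$ and omitting every $\Gamma^i$, and this same structure witnesses the $\alpha$-OTP for $\HFOLS$.

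I expect the main obstacle to be the bookkeeping in the third step, namely making precise that the auxiliary set of sentences used in the definition of $\alpha$-realization may always be chosen inside $\HFOLS$ without loss of generality; this is exactly the point where the expressive equivalence of $\HFOLS$ and $\HFOLR$ provided by Lemma~\ref{lemma:HFOLR-HFOLS} is indispensable, and it also explains why the result has to be ``borrowed'' from $\HFOLR$ rather than obtained by running the forcing construction directly in $\HFOLS$, which lacks the substitution apparatus of Section~\ref{sec:subst}.
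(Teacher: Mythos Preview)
Your proposal is correct and follows essentially the same approach as the paper: reduce to $\HFOLR$ via the inclusion $\Sen^\HFOLS\subseteq\Sen^\HFOLR$ and Lemma~\ref{lemma:HFOLR-HFOLS}, then invoke Theorem~\ref{th:eott}. The paper's proof is considerably terser, dispatching the implication ``$T$ $\alpha$-omits $\Gamma^i$ in $\HFOLS$ $\Rightarrow$ $T$ $\alpha$-omits $\Gamma^i$ in $\HFOLR$'' with the phrase ``by the remarks above,'' whereas you spell out the contrapositive argument (replacing the $\HFOLR$-witness $p$ by a semantically equivalent $\HFOLS$-set $p'$); this is exactly the detail the paper suppresses.
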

\begin{proof}
Recall that for all $\HFOLS$ signatures $\Delta$, we have:
\begin{itemize}
\item $\Sen^\HFOLS(\Delta)\subseteq \Sen^\HFOLR(\Delta)$, and
\item by Lemma~\ref{lemma:HFOLR-HFOLS}, for every sentence $\gamma\in\Sen^\HFOLR(\Delta)$ there exists a sentence $\gamma'\in\Sen^\HFOLS(\Delta)$ which is satisfied by the same class of Kripke structures as $\gamma$.
\end{itemize}
Assume that $T$ $\alpha$-omits $\Gamma^i$ as described in Definition~\ref{def:OTS}.
By the remarks above, $T$ $\alpha$-omits $\Gamma^i$ in $\HFOLR$ for all $i<\alpha$.
By Theorem~\ref{th:eott}, there exists a Kripke structure $(W,M)$ over $\Delta$, which satisfies $T$ and omits $\Gamma^i$ for all $i<\alpha$.
\end{proof}
It is worth noting that in general the
Omitting Types Property cannot be borrowed from a given
logic to its restrictions. 
If $T$ omits $\Gamma^i$ in a restriction
then $T$ might not omit $\Gamma^i$ in
the full underlying logic. 
This is the reason for developing Theorem~\ref{th:eott} in an arbitrary fragment
$\L$ of $\HDFOLR$. 
\section{Constructor-based completeness}\label{7}

Constructor-based completeness is a modern approach to the well-known $\omega$-completeness, which has applications in formal methods. 
We make the result independent of the arithmetic signature by working over an arbitrary vocabulary where we distinguish a set of constructors which determines a class of Kripke structures reachable by constructors.
Throughout this section we assume that the fragment $\L$ is semantically closed under equality, negation, retrieve, disjunction and quantifiers. 
An example of such fragment $\L$ is $\HDFOLR$ or $\HDPL$.

\subsection{Semantic restrictions}
Given a theory $T$ over a vocabulary $\Delta$, not all Kripke structures are of interest.
In many cases, formal methods practitioners are interested in the properties of a class Kripke structures that are reachable by a set of constructor operators.
Let $\Delta=(\Sigma^\nom,\Sigma^\rigid\subseteq\Sigma)$ be a signature and 
$\Sigma^{\cons}\subseteq\Sigma^\rigid$ a subset of constructor operators.
The constructors create a partition of the set of rigid sorts $S^\rigid$.
A \emph{constrained} sort is a rigid sort $s\in S^\rigid$ that has a constructor, that is, there exists a constructor $\sigma\colon w\to s$ in $\Sigma^\cons$.
A rigid sort that is not constrained it is called \emph{loose}.
We denote by $S^\cons$ the set of all constrained sorts, and by $S^\loose$ the set of all loose sorts.
Let $Y=\{Y_s\}_{s\in S^\loose}$ be a set of loose variables such that $Y_s$ is countably infinite for all $s\in S^\loose$.
A constructor-based Kripke structure is a Kripke structure $(W,M)$ such that 
\begin{itemize}
\item for all possible worlds $w\in|W|$ there exists a nominal $k\in F^\nom$ such that $w= W_k$, and
\item for all rigid sorts $s\in S^\rigid$, all possible worlds $w\in|W|$, and all elements $m\in M_{w,s}$ there exist 
an expansion $(W,N)$ of $(W,M)$ to $\Delta[Y]$, and 
a rigid term $t\in T_{@\Sigma^\cons}(Y)$ such that $m=N_{w,t}$.
\end{itemize}
\begin{example}
Let $\Delta$ be the signature defined as follows:
\begin{itemize}
\item 
$\Sigma^\nom=( F^\nom, P^\nom)$ such that $ F^\nom$ consists of all natural numbers, and
$ P^\nom$ has one element $\lambda$.

\item $\Sigma=(S,F,P)$, $S= \{Elt, List\}$, $F= \{empty:\to List, cons : Elt~List \to List, delete: List \to List \}$ and $P=\emptyset$.

\item $S^\rigid=S$ and $F^\rigid=\{empty:\to List, cons : Elt~List \to List\}$.

\item The set of constructors $F^\cons$ is $F^\rigid$. 
\end{itemize}

This means that $List$ is constrained while $Elt$ is loose.

We define a theory $T$ over $\Delta$, which deletes $n$ elements from a list in each possible world $n$:

\begin{itemize}
\item $\{ \at{n}\pos{\lambda} n+1 \mid n \geq 0 \} \cup \{ \neg \at{n} m \mid n \neq m\}$, 
\item $\{ \Forall{N}(\at{N} delete)(empty)=empty, \Forall{L}(\at{0}delete)(L)=L\}$, and
\item $\{ \Forall{E,L} (\at{n+1}delete) cons(E,L) = (\at{n}delete)(L) \mid n >0 \}$.
\end{itemize}
A constructor-based Kripke structure which satisfies $T$ has the set of possible worlds isomorphic with $\omega$.
Let $(W,M)$ be the constructor-based Kripke structure such that
(a)~$|W|=\omega$ and $W_\lambda$ is $<$, the natural order on $\omega$, and
(b)~for all possible worlds $n\in\omega$, 
the first-order-structure $M_n$ interprets 
$Elt$ as an arbitrary set, and
$List$ as the set of lists with elements from $M_{n,Elt}$, 
while the function $M_{n,delete}\colon M_{n,List}\to M_{n,List}$ deletes the first $n$ elements from the list given as argument.
Obviously, $(W,M)$ satisfies $T$.
\end{example}

By enhancing the syntax with a subset of rigid constructor operators and by
restricting the semantics to constructor-based Kripke structures, we obtain a
new logic $\L^\cons$ from $\L$. Note that 
restricting the semantics also changes
the relation $\models$, applied to theories:
$T\models\varphi$ now means that all restricted
models of $T$ are models of $\varphi$, so there may be non-restricted models of
$T$ which are not models of $\varphi$.

\subsection{Entailment systems}

Given a system of proof rules for $\L$  which is sound and complete, the goal
is to add some new proof rules such that the resulting proof system is sound and
complete for $\L^\cons$. 

\begin{definition}[Entailment relation] \label{def:entail} 
 An \emph{entailment relation} for $\L$ is a family of binary relations between sets of sentences indexed by signatures $\vdash=\{\vdash_\Delta\}_{\Delta\in|\Sig^\L|}$ with the following properties:
\begin{longtable}{l l}
\Monotonicity\inferrule{\Phi_1\subseteq \Phi_2}{\Phi_2 \vdash \Phi_1}
& \Transitivity\inferrule{\Phi_1\vdash \Phi_2 \Space \Phi_2\vdash \Phi_3}{\Phi_1\vdash \Phi_3}\\ \\
\Union\inferrule{\Phi_1\vdash \varphi_2 \text{ for all }\varphi_2\in\Phi_2}{\Phi_1\vdash\Phi_2} & 
\Translation\inferrule{\Phi_1 \vdash_\Sigma \Phi_2}{\chi(\Phi_1) \vdash \chi(\Phi_2)} where $\chi\colon\Delta\to\Delta'$ \\
\end{longtable}
\end{definition}

The entailment relation is sound (complete) if $\vdash\subseteq \models$ ($\models \subseteq \vdash$).
Examples of sound and complete entailment relations for $\HFOLR$ and $\HPL$ can be found in \cite{gai-godel}.

\begin{definition} [Constructor-based entailment relation]
Let $\vdash$ be an entailment relation for $\L$.
The entailment relation $\vdash^\cons$ for $\L^\cons$ is the least entailment
relation closed under the following proof rules: \newline
\begin{longtable}{l l l}
(R0)~\inferrule{\Phi\vdash \varphi}{\Phi\vdash^\cons \varphi} &
(R1)~\inferrule{\Phi\vdash^\cons \at{k_1} \varphi(k_2) \text{ for all }k_1, k_2\in F^\nom}{\Phi\vdash^\cons \Forall{x}\varphi(x)} &
(R2)~\inferrule{\Phi\vdash^\cons \at{k}\Forall{Y_t}\psi(t) \text{ for all } k\in F^\nom \text{ and } t\in T_{\Sigma^\cons}(Y)}{\Phi \vdash^\cons \Forall{y}\psi(y)} \\\
& & where 
$Y_t$ is the set of variables occurring in $t$\\
\end{longtable}
\end{definition}
According to \cite{dia-pro}, the entailment relation $\vdash^\cons$ is well defined.
We say that a theory $T$ in $\L$ is \emph{semantically closed under (R1)} if $T\models \at{k_1}\varphi(k_2)$ for all $k_1,k_2\in F^\nom$ implies $T\models \Forall{x}\varphi(x)$.
Similarly, we define the closure under (R2), that is, $T\models \at{k}\Forall{Y_t}\psi(t)$ for all $k\in F^\nom$ and $t\in T_{@\Sigma}(Y)$ implies $T\models \Forall{y}\psi(y)$.
It is not difficult to check that $\vdash^\cons$ is sound for $\L^\cons$ provided that $\vdash$ is sound for $\L$.
Completeness is much more difficult to establish in general,
but it can be done with the help of the OTP. 
\begin{theorem}[Constructor-based completeness]
The entailment relation $\vdash^\cons$ is complete for $\L^\cons$ if $\vdash$ is complete for $\L$ and $\L$ has OTP.
\end{theorem}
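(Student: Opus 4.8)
\emph{Approach.} The plan is to prove completeness by contraposition: assuming $T \models^\cons \varphi$ (every constructor-based model of $T$ satisfies $\varphi$) but $T \not\vdash^\cons \varphi$, I will build a constructor-based Kripke structure satisfying $T$ and refuting $\varphi$. The construction goes through the Omitting Types Theorem, using two families of ``bad'' types whose omission is precisely what forces a model to be constructor-based. Throughout I use that $\L$ is semantically closed under equality, negation, retrieve, disjunction and quantifiers, and that $\vdash$ is complete for $\L$.

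\emph{Step 1: locate a named world and pass to a $\vdash^\cons$-closed theory.} From $T \not\vdash^\cons \varphi$ and rule (R0) we get $T \not\vdash \varphi$, so by completeness of $\vdash$ and Lemma~\ref{lemma:h-prop} the sentence $\varphi$ fails somewhere in some $\L$-model of $T$. I first arrange failure at a \emph{named} world: if $T \cup \{\at{k}\neg\varphi\}$ were $\vdash^\cons$-inconsistent for every $k \in F^\nom$, then the deduction theorem for $\vdash^\cons$ (which holds for $@$-sentences, via the standard hybrid manipulation) gives $T \vdash^\cons \at{k}\varphi$ for all $k$, and rule (R1) applied with the formula $\at{x}\varphi$ yields $T \vdash^\cons \Forall{x}\at{x}\varphi$, hence $T \vdash^\cons \varphi$ by Lemma~\ref{lemma:h-prop}, a contradiction. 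Fix $k_0$ with $T_0 := T \cup \{\at{k_0}\neg\varphi\}$ $\vdash^\cons$-consistent, and let $T^\ast$ be its $\vdash^\cons$-deductive closure over $\Delta$. Then $T^\ast$ is consistent, hence $\L$-satisfiable by completeness of $\vdash$; and since $T^\ast$ is $\vdash^\cons$-closed and $\vdash$ is complete, $T^\ast$ is \emph{semantically} closed under rules (R1) and (R2).

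\emph{Step 2: the types and the key omission claim.} Let $\alpha$ be the power of $\Delta$. Over $\Delta$ with a single fresh variable, set $\Gamma^{\any} := \{\at{z}\neg k \mid k \in F^\nom\}$ (``this world denotes no nominal''), and for each constrained sort $s \in S^\cons$ set $\Gamma^s := \{\Forall{Y_t}\neg(c = t) \mid t \in T_{\Sigma^\cons}(Y)_s\}$ with $c$ of sort $s$ (``this element denotes no constructor term, under any valuation of the loose parameters''); these sentences exist by the assumed closure properties of $\L$. This is a family of at most $\alpha$ types, each with finitely many free variables. The heart of the argument is that $T^\ast$ $\alpha$-omits each of them. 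Arguing by contraposition for $\Gamma^s$: if $T^\ast$ $\alpha$-realized $\Gamma^s$, then by Lemma~\ref{lemma:loc-ott}~(\ref{lemma:loc-ott-1}) there is a parameter set $p$ over $\Delta$ with $c$ and finitely many new constants $C$, with $T^\ast \cup p$ satisfiable and $T^\ast \cup p \models \Forall{Y_t}\neg(c=t)$ for all constructor terms $t$; replacing each $\gamma \in p$ by $\Forall{z}\at{z}\gamma$ to make $p$ world-independent, folding $p$ into one sentence $\widehat p(c)$, generalizing away $C$, and instantiating $c$ by each $t$, one obtains $T^\ast \models \at{k}\Forall{Y_t}\neg\widehat p\,{}''(t)$ for all $k \in F^\nom$ and all $t$, where $\widehat p\,{}''(y) := \Exists{C}\widehat p(y)$; semantic closure of $T^\ast$ under (R2) then gives $T^\ast \models \neg\Exists{C}\Exists{y}\widehat p$, which contradicts satisfiability of $T^\ast \cup p$. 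The case of $\Gamma^{\any}$ is analogous, using rule (R1) in place of (R2).

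\emph{Step 3: conclude via the OTT.} Since $\L$ has OTP, $T^\ast$ is satisfiable over $\Delta$, and $T^\ast$ $\alpha$-omits the chosen family (with $\alpha$ the power of $\Delta$), Theorem~\ref{th:eott} provides a Kripke structure $(W,M)$ over $\Delta$ with $(W,M) \models T^\ast$ that omits every $\Gamma^{\any}$ and every $\Gamma^s$. Omitting $\Gamma^{\any}$ means exactly that each world of $W$ equals $W_k$ for some $k \in F^\nom$; omitting $\Gamma^s$ means exactly that each element of a constrained rigid sort is the denotation of a constructor term under a suitable interpretation of the loose parameters (for loose sorts the defining condition is vacuous, since their only constructor terms are loose variables). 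Hence $(W,M)$ is constructor-based; moreover $(W,M) \models T$ as $T \subseteq T^\ast$, and $(W,M) \not\models \varphi$ as $\at{k_0}\neg\varphi \in T^\ast$ forces $(W,M) \models^{W_{k_0}} \neg\varphi$. This contradicts $T \models^\cons \varphi$, finishing the proof. The step I expect to be the main obstacle is Step 2, turning the purely semantic hypothesis ``$T^\ast$ $\alpha$-realizes a type'' into a $\vdash^\cons$-refutation: the auxiliary parameter set must be pushed inside the scope of the $\omega$-rules (R1)/(R2) uniformly, which is straightforward when it is finite (in particular for countable signatures, $\alpha = \omega$) but for $\alpha > \omega$ is delicate and requires compactness of $\L$ together with extra care to keep the $\omega$-rule applicable.
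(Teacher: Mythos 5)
Your core argument is the paper's: the same two families of types (worlds unnamed by nominals; constrained elements unreachable by constructor terms), the same observation that omitting them forces a constructor-based model, the same derivation of \emph{semantic} closure of the deductively closed theory under (R1)/(R2) from completeness of $\vdash$ plus rule (R0), and the same appeal to Theorem~\ref{th:eott}. Your Step~2 is a contrapositive rendering of the paper's direct computation (the paper takes an arbitrary satisfiable $T\cup\{\rho(y)\}$ and produces a realizing instance $T\cup\{\rho(y)\}\cup\{\Exists{Y_t}\,t=y\}$ via closure under (R2)), and your closing caveat about $\alpha>\omega$ matches the role of Lemma~\ref{lemma:loc-ott}~(\ref{lemma:loc-ott-3}).

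Where you genuinely diverge is the wrapper, and that is where the one real gap sits. The paper proves completeness in the model-existence form: $T$ consistent in $\L^\cons$ $\Rightarrow$ $T'=\{\varphi\mid T\vdash^\cons\varphi\}$ consistent in $\L$ $\Rightarrow$ $T'$ satisfiable and semantically (R1)/(R2)-closed $\Rightarrow$ $T'$ has a constructor-based model. It never needs to locate a \emph{named} world refuting a particular $\varphi$. You instead contrapose $\models^\cons\subseteq\vdash^\cons$ directly, and to do so you assert that if $T\cup\{\at{k}\neg\varphi\}$ is $\vdash^\cons$-inconsistent for every $k$ then $T\vdash^\cons\at{k}\varphi$ for every $k$. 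That is a deduction theorem (reductio) for $\vdash^\cons$, and it is not available from the paper's abstract axiomatization: an entailment relation is only required to satisfy \emph{Monotonicity}, \emph{Transitivity}, \emph{Union} and \emph{Translation}, and $\vdash^\cons$ is the least such relation closed under (R0)--(R2); nothing guarantees negation introduction. (Lemma~\ref{lemma:h-prop}(4) gives the \emph{semantic} version only.) So either you must add reductio as a hypothesis on the base calculus $\vdash$, or you should retreat to the paper's formulation and prove ``$\vdash^\cons$-consistent $\Rightarrow$ $\L^\cons$-satisfiable'', in which case your Steps~2--3 go through essentially verbatim. A second, smaller point in the same step: applying (R1) to conclude $T\vdash^\cons\Forall{x}\at{x}\varphi$ requires the premises $T\vdash^\cons\at{k_1}\at{k_2}\varphi$ for all pairs $k_1,k_2$, so you are also tacitly assuming the calculus proves the agreement $\at{k_1}\at{k_2}\varphi\dashv\vdash\at{k_2}\varphi$; harmless in any concrete hybrid calculus, but again not supplied by the abstract framework.
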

\begin{proof} 
Let $\Delta=(\Sigma^\nom,\Sigma^\rigid\subseteq\Sigma)$ be a signature and $T$ a
theory over $\Delta$ in $\L$. 
Let $\Sigma^\cons\subseteq \Sigma^\rigid$ be a set of constructors, and $Y$ a
set of loose variables. 
We perform the proof in two steps.

\begin{enumerate}[label=(S\arabic*)]

\item We show that if $T$ is satisfiable in $\L$ and semantically
  closed under (R1) and (R2) then $T$ is satisfiable in $\L^\cons$. 
Let $\Gamma^\nom\coloneqq\{\neg  \at{k} x \mid k\in F^\nom \}$ be a type in one nominal variable $x$, and
let $\Gamma^\rigid\coloneqq\{\Forall{Y_t}\neg t= y\mid t\in T_{\Sigma^\cons}(Y)\}$ be a type in one constrained variable $y$.
Any Kripke structure over $\Delta$ which omits $\Gamma^\nom$ and $\Gamma^\rigid$ is reachable by the constructors in $\Sigma^\cons$. 
Firstly, we show that $T$ locally omits $\Gamma^\nom$:
\begin{proofsteps}{24em} 
let $\rho(x)$ be a $\Delta[x]$-sentence such that $T\cup \{\rho(x)\}$ is satisfiable & \\
$T \cup \{ \Exists{x}\Forall{z}\at{z}\rho(x) \}$ is satisfiable & by semantics since $T\cup \{\rho(x)\}$ is satisfiable \\
$T\not\models \Forall{x} \neg \Forall{z}\at{z}\rho(x)$ & 
since $(W,M)\models T \cup \{\Forall{z}\at{z}\rho(x)\}$ for some Kripke structure $(W,M)$ over $\Delta[x]$ \\
$T\not\models \at{k_1} \neg \Forall{z}\at{z} \rho(k_2)$ for some nominals $k_1,k_2\in F^\nom$ 
& since $T$ is semantically closed under (R1)\\
$T\cup\{ \at{k_1} \Forall{z}\at{z}\rho(k_2)\}$ is satisfiable & by semantics of
negation and retrieve \\ 
$T\cup \{ \at{k_1}\Forall{z}\at{z}\rho(x) \} \cup \{ \at{k_2} x \}$ is satisfiable 
& by the semantics of nominals\\
$T\cup \{\rho(x) \} \cup \{ \at{k_2} x \}$ is satisfiable 
& by Lemma~\ref{lemma:h-prop} \\
$T$ locally omits $\Gamma^\nom$ & since $\rho(x)$ was arbitrarily chosen
\end{proofsteps}

Secondly, we show that $T$ locally omits $\Gamma^\rigid$:
\begin{proofsteps}{24em}
let $\rho(y)$ be a $\Delta[y]$-sentence such that $T\cup\{\rho(y)\}$ is satisfiable & \\
$T\cup\{\Exists{y}\Forall{z}\at{z}\rho(y)\}$ is satisfiable & 
by Lemma~\ref{lemma:h-prop}\\
$T\not\models \Forall{y}\neg\Forall{z}\at{z}\rho(y)$ 
& since $(W,M)\models T \cup \{\Forall{z}\at{z}\rho(y)\}$ for some Kripke structure $(W,M)$ over $\Delta[y]$ \\
$T\not\models\at{k}\Forall{Y_t}\neg\Forall{z}\at{z}\rho(t)$ for some $k\in
F^\nom$ and
$t\in T_{@\Sigma^\cons}(Y)$
& since $T$ is semantically closed under (R2)\\
$T\not\models \at{k}\neg\Forall{z}\at{z}\rho(t)$
over $\Delta[Y_t]$ & by semantics of quantifiers\\
$T\cup\{  \at{k}\Forall{z}\at{z}\rho(t)\}$ is satisfiable over $\Delta[Y_t]$ 
& by semantics of negation and retrieve \\
$T\cup\{ \rho(t)\} $ is satisfiable over $\Delta[Y_t]$ & 
by Lemma~\ref{lemma:h-prop}\\
$T\cup \{\rho(y)\}\cup \{\Exists{Y_t} t= y\}$ is satisfiable & 
since $(W,M)\models T\cup\{\rho(t) \}$ for some Kripke structure $(W,M)$ over $\Delta[Y_t]$ \\
$T$ locally omits $\Gamma^\rigid$ & since $\rho(y)$ was arbitrarily chosen
\end{proofsteps}

By Theorem~\ref{th:eott}, there exists a Kripke structure $(W,M)$ which satisfies $T$ and omits $\Gamma^\nom$ and $\Gamma^\rigid$.
By the definition of $\Gamma^\nom$ and $\Gamma^\rigid$, $(W,M)$ is a constructor-based Kripke structure.

\item Next we assume
$T$ is consistent in $\L^\cons$ and show that $T$ is satisfiable in $\L^\cons$.
Let $T'\coloneqq \{\varphi \in \Sen(\Delta)\mid T\vdash^\cons \varphi\}$.
We have that $T$ is consistent in $\L^\cons$ iff  $T'$ is consistent in $\L$:
\begin{itemize}
\item[] For the forward
implication, suppose towards a contradiction that $T'$ is not consistent in
$\L$, that is, $T'\vdash\bot$; 
By (R0), $T'\vdash^\cons\bot$;
by \Union, $T\vdash^\cons T'$;
by \Transitivity, $T\vdash^\cons\bot$, 
which is a contradiction with the consistency of $T$ in $\L^\cons$.

For the backward implication, suppose towards a contradiction that $T\vdash^\cons\bot$;
we have $\bot\in T'$, and by \Monotonicity, 
$T'\vdash \bot$, which is a contradiction with the consistency of $T'$ in $\L$. 
\end{itemize}
Assume that $T$ is consistent in $\L^\cons$.
It follows that $T'$ is consistent in $\L$.
By the completeness of $\vdash$ in $\L$, $T'$ is satisfiable in $\L$.
By the completeness of $\vdash$ in $\L$, $T'$ is semantically closed under (R1) and (R2).
By the first part of the proof, $T'$ is satisfiable in $\L^\cons$.
Since $T\subseteq T'$, $T$ is satisfiable in $\L^\cons$.  
\end{enumerate}
\end{proof}

\section{Omitting types and L\"owenheim-Skolem Theorems}\label{8}
Downwards and Upwards L\"owenheim-Skolem Theorems are consequences of the Omitting
Types Theorem. 
Throughout this section we assume that the fragment $\L$ is semantically closed under 
equality, 
retrieve, 
negation, disjunction,  
possibility over binary modalities, and 
quantifiers. 
An example of such fragment $\L$ is $\HDFOLR$ or $\HDPL$,
in which case $\L$ has $\omega$-OTP.
For cardinals greater than $\omega$, we need to drop the Kleene operator $*$
in order to have compactness and be able to apply our OTP (we will show in the next
section that compactness is necessary at least for certain strong fragments of $\L$). 
Some of the arguments in this and the next section are modelled after
the technique used by Lindstr\"om~\cite{lind78} for first-order logic without
equality.
\begin{theorem}[Downwards L\"owenheim-Skolem Theorem] \label{th:DLS}
Assume that $\L$ has $\alpha$-OTP.
Let $T$ be a satisfiable theory over a signature $\Delta$ of power at most $\alpha$.
Then $T$ has a Kripke structure $(W,M)$ such that 
$\card(W)\leq\alpha$ and $\card(M_{w,s})\leq\alpha$ for all rigid sorts $s\in S^\rigid$.
\end{theorem}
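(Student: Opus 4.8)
The plan is to obtain the bounded model as a generic model of the semantic forcing property attached to $\mathcal{K}=|\Mod(\Delta,T)|$, exactly as in the proof of Theorem~\ref{th:eott}, but with an empty family of types to omit. First I would invoke the satisfiability of $T$ to see that $\emptyset\in P$ (the empty condition is satisfied by any model of $T$), so the semantic forcing property $\mathbb{P}=\langle P,\leq,f\rangle$ over $\Delta[C]$ is non-trivial; here $C=\{C_s\}_{s\in S^\ext}$ is a sorted set of new rigid constants with $\card(C_s)=\alpha$. Running the chain construction from step S2 of the proof of Theorem~\ref{th:eott} with no types (so each successor step only decides the next sentence $\at{k_j}\varphi_j$ with retrieve as top operator, using the Semantic Forcing Theorem~\ref{th:semantic-forcing} to pass from $p_j\Vdash^{k_j}\varphi_j$ to an extended condition, and using compactness at limit ordinals when $\alpha>\omega$), one produces a generic set $G$, and by the argument in step S2 one gets $q\Vdash^k\psi$ for all $q$ ranging suitably and all $\psi\in T$. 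Then by Theorem~\ref{GMT} there is a reachable generic Kripke structure $(W,M)$ for $G$, and its reduct $(V,N)\coloneqq(W,M)\red_\Delta$ satisfies $T$ by the same reasoning as step S3.

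The crucial point is the cardinality bound, and this is where reachability does the work. By Proposition~\ref{reach-HFOLR}, since $(W,M)$ is reachable over $\Delta[C]$, every possible world is the denotation of some nominal in $F^\nom\cup C_\any$, and every element of a rigid carrier $M_{w,s}$ (for $s\in S^\rigid$) is the denotation of a rigid term over $\Delta[C]$ built from symbols in $@\Sigma$ together with the new constants $C$. Now $\card(F^\nom\cup C_\any)\leq\alpha$ because $\Delta$ has power $\alpha$ and $\card(C_\any)=\alpha$; hence $\card(|W|)\leq\alpha$, and the same bound transfers to $(V,N)$ since reducts preserve possible worlds. Likewise the set $T_{@\Sigma[C]}$ of rigid terms has cardinality at most $\alpha$ (finitely many operation symbols per arity, $\alpha$-many constants, and the set of finite terms over an alphabet of size $\alpha$ still has size $\alpha$), so $\card(M_{w,s})\leq\alpha$ for every rigid sort $s$ and every world $w$; again these carriers are unchanged under the reduct to $\Delta$ because rigid sorts are shared and preserved by $\red_\Delta$. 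This yields $\card(W)\leq\alpha$ and $\card(M_{w,s})\leq\alpha$ for all $s\in S^\rigid$, which is exactly the claim.

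I expect the main obstacle to be purely bookkeeping rather than conceptual: one must be careful that the generic model is reachable \emph{as a model over the expanded signature} $\Delta[C]$, so that both the worlds and the rigid carriers are enumerated by the $\alpha$-sized vocabulary; the flexible sorts are deliberately left unbounded, which is why the statement only asserts bounds on $\card(W)$ and on the rigid carriers. A secondary subtlety is the use of compactness at limit stages when $\alpha>\omega$, which is already assumed in the hypothesis ``$\L$ has $\alpha$-OTP'' (that property was only proved under compactness for uncountable $\alpha$ in Theorem~\ref{th:eott}), so no extra assumption is needed. Concretely, the cleanest write-up is: take the Kripke structure supplied by the $\alpha$-OTP applied to $T$ and the empty family of types — this already gives a model of $T$ — but to get the cardinality bound one re-runs the forcing construction so that the witnessing model is the \emph{reachable} generic model, and then reads off the bounds from Proposition~\ref{reach-HFOLR} and a count of nominals and rigid terms over $\Delta[C]$.
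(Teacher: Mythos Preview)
Your approach is correct in spirit but takes a genuinely different route from the paper, and it has one real gap.

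The paper does \emph{not} re-open the forcing machinery. Instead it applies the hypothesis $\alpha$-OTP directly, as a black box, to a carefully chosen family of types: after adding a set $C=\{C_s\}_{s\in S^\ext}$ of fresh constants with $\card(C_s)=\alpha$, it sets $\Gamma^s\coloneqq\{c\neq x\mid c\in C_s\}$ (one type per extended rigid sort, in a single variable $x$ of sort $s$). A short fresh-constant argument shows that $T$ $\alpha$-omits each $\Gamma^s$, so $\alpha$-OTP yields a model of $T$ over $\Delta[C]$ that omits every $\Gamma^s$; omitting $\Gamma^s$ says precisely that every element of sort $s$ coincides with the interpretation of some $c\in C_s$, whence the cardinality bound. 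Your route instead rebuilds a reachable generic model via the semantic forcing property and then counts nominals and rigid terms over $\Delta[C]$; this also gives the bound and is arguably more hands-on, but it does not use the stated hypothesis at all.

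The gap is your handling of limit stages. You write that compactness for $\alpha>\omega$ ``is already assumed in the hypothesis `$\L$ has $\alpha$-OTP' (that property was only proved under compactness\ldots)''. That inference is backwards: the theorem assumes only that $\L$ has $\alpha$-OTP, and the fact that the paper \emph{proved} $\alpha$-OTP under compactness does not mean that $\alpha$-OTP \emph{entails} compactness. (Section~\ref{9} recovers only $\beta$-inf-compactness for $\beta<\alpha$, for a specific fragment, under extra regularity and closure assumptions.) So your chain construction at limit ordinals is unjustified for uncountable $\alpha$ under the stated hypotheses, whereas the paper's type-based argument needs no compactness because it invokes $\alpha$-OTP once and is done. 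If you want to keep your reachability-and-counting approach, you must either add compactness as an explicit extra hypothesis for $\alpha>\omega$, or else find another way to produce the generic set---for instance, by observing that the model delivered by $\alpha$-OTP in the paper's argument is itself (the reduct of) a reachable generic model, which is really just the paper's proof in disguise.
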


\begin{proof}
Let $C=\{C_s\}_{s\in S^\ext}$ be a sorted set of new constants for $\Delta$ such that $\card(C_s)=\alpha$ for all sorts $s\in S^\ext$.
Let $\Gamma^s\coloneqq\{ c \neq x \mid c \in C_s\}$ be a
type\footnote{Notice that for nominals, $c\neq x$ means $\neg\at{c}x$. Compare
Lemma~\ref{lemma:loc-ott} for a similar use.}
in one variable
$x$ of sort $s\in S^\ext$.
We show that $T$ $\alpha$-omits $\Gamma^s$:
\begin{proofsteps}{28em}
let $p$ be a set of sentences over $\Delta[C,x]$ such that \rlap{$\card(p)<\alpha$ and $T\cup p$ is satisfiable} & \\
$p\subseteq \Delta[C',x]$ for some $C'\subseteq C$ such that $\card(C'_s)< \alpha$ 
& since $\card(p)<\alpha$ \\
there exists $c\in C_s\setminus C_s'$ & since $\card(C_s)=\alpha$ and $\card(C_s')<\alpha$ \\
$T\cup p\cup \{ x = c\}$ is satisfiable & since $T\cup p$ is satisfiable and $c$ does not occur in $T\cup p$\\
$T$ $\alpha$-omits $\Gamma^s$ & since $p$ was arbitrarily chosen
\end{proofsteps}
Since $\L$ has $\alpha$-OTP, there exists a Kripke structure $(W,M)$ over $\Delta[C]$ which satisfies $T$ and omits $\Gamma^s$ for all $s\in S^\ext$.
\end{proof}

\begin{theorem}[Upwards L\"owenheim-Skolem Theorem] \label{th:ULS}
Assume that $\L$ has $\alpha$-OTP, where $\alpha$ is a regular cardinal.
Let $T$ be a satisfiable theory over a signature $\Delta$ of power at most $\alpha$.
For each model $(W,M)$ of $T$ there exists another model $(V,N)$ of $T$ such that $\card((V,N)_s)\geq\alpha$ for all sorts $s\in S^\ext$.

In fact, 
if $\Delta'$ is obtained from $\Delta$ by adding a rigid binary relation $\leq$ on each sort $s\in S^\ext$ interpreted by $(W,M)$ as infinite
then there exists an expansion $(V',N')$ of $(V,N)$ to $\Delta'$ such that $\pos{(W,M)_s,(W,M)_\leq}$ is a linear ordering of cofinality $\alpha$ for all sorts $s\in S^\ext$.
\end{theorem}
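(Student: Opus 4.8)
The plan is to derive the theorem from the $\alpha$-Omitting Types Property by working in a signature enlarged so as to force, on every sort $s\in S^\ext$, a strictly $\leq_s$-increasing chain of length $\alpha$ that is cofinal; since $\alpha$ is regular, such a chain has cofinality exactly $\alpha$, and in particular witnesses cardinality at least $\alpha$. Concretely, I would work over $\Delta'[C]$, where $\Delta'$ adds to $\Delta$ the rigid binary relation $\leq_s$ (the relation $\leq$ of the statement on the data sorts, a modality on the nominal component $\any$) and $C=\{C_s\}_{s\in S^\ext}$ with $C_s=\{c^s_i\mid i<\alpha\}$ a set of new rigid constants of sort $s$ (new nominals when $s=\any$); this signature still has power at most $\alpha$. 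I would then let $T^+$ be $T$ together with the $\L$-sentences expressing, for each $s\in S^\ext$, that $\leq_s$ is a linear order with no greatest element and that $c^s_i<_s c^s_j$ for all $i<j<\alpha$ — these are $\L$-sentences because $\L$ is here assumed semantically closed under equality, negation, disjunction, retrieve, possibility over binary modalities and quantifiers.

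Satisfiability of $T^+$ comes from the hypothesis on $(W,M)$: since every $s\in S^\ext$ is infinite in $(W,M)$, we may expand $(W,M)$ to a $\Delta'$-structure interpreting each $\leq_s$ by an infinite linear order with no greatest element. For $\alpha=\omega$, iterating the ``no greatest element'' condition produces an increasing $\omega$-chain on each sort, so $T^+$ is satisfied directly; for $\alpha>\omega$, every finite subset of $T^+$ is satisfiable in such an expansion, and compactness of $\L$ (available in this section for uncountable $\alpha$) gives a model of $T^+$.

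The crux is to check that $T^+$ $\alpha$-omits, for each $s\in S^\ext$, the one-variable type $\Gamma^s(x)=\{\,c^s_i\leq_s x\mid i<\alpha\,\}$. Suppose not: there are a set $D$ of fewer than $\alpha$ new constants, a substitution $\theta\colon\{x\}\to D$ with $d:=\theta(x)$, and a set $q$ of sentences over $\Delta'[C,D]$ with $\card(q)<\alpha$ such that $T^+\cup q$ is satisfiable and $T^+\cup q\models\{\,c^s_i\leq_s d\mid i<\alpha\,\}$. As $\card(q)<\alpha$ and $\alpha$ is regular, the set of indices $i$ with $c^s_i$ occurring in $q$ is bounded by some $i_0<\alpha$; pick $j^*$ with $i_0<j^*<\alpha$. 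It then suffices to show $T^+\cup q\cup\{\,d<_s c^s_{j^*}\,\}$ is satisfiable, which contradicts $T^+\cup q\models c^s_{j^*}\leq_s d$ by antisymmetry of $\leq_s$. Given a model $A$ of $T^+\cup q$ (in which, under the hypothesis, $d$ is $\leq_s$-above every $c^s_i$), I would reinterpret only the constants $c^s_i$ with $i\geq j^*$ — which do not occur in $q$ — pushing them strictly above $d^A$, using the ``no greatest element'' axiom: for $\alpha=\omega$ this handles the whole tail at once (an increasing $\omega$-chain above $d^A$), and for $\alpha>\omega$ it handles the finitely many such constants in any given finite subset, after which compactness finishes; all of $q$, the $\Delta$-reduct of $A$, the $\leq_s$ and the constants occurring in $q$ are left unchanged.

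To conclude, I would apply the $\alpha$-OTP of $\L$ to the signature $\Delta'[C]$, the satisfiable theory $T^+$, and the family $\{\Gamma^s\mid s\in S^\ext\}$ (padded with repetitions to an $\alpha$-indexed family), obtaining a Kripke structure $(P,Q)$ over $\Delta'[C]$ satisfying $T^+$ and omitting every $\Gamma^s$. Omitting $\Gamma^s$ says exactly that $\{c^s_i\mid i<\alpha\}$ has no $\leq_s$-upper bound, i.e. is $\leq_s$-cofinal; being also strictly increasing of length $\alpha$ with $\alpha$ regular, it witnesses cofinality $\alpha$, hence sort $s$ has at least $\alpha$ elements. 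Reducts along $\Delta\hookrightarrow\Delta'\hookrightarrow\Delta'[C]$ change neither universes nor the $\leq_s$, so $(V',N'):=(P,Q)\red_{\Delta'}$ and $(V,N):=(P,Q)\red_\Delta$ are the required structures, with $(V,N)\models T$. The main obstacle is the omitting argument just sketched: one must exploit that $T^+$ pins down all $\alpha$ new constants while $q$ touches fewer than $\alpha$ of them, and it is the ``no greatest element'' axiom together with (for uncountable $\alpha$) a passage to a new model by compactness that makes the key reinterpretation possible; a minor further point is verifying that the stated closure properties of $\L$ indeed express all the auxiliary axioms, including on the nominal component.
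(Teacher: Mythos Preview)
Your approach is correct and matches the paper's: add a linear order without greatest element and $\alpha$ new constants per sort, show the resulting theory $\alpha$-omits each type $\{c_i \leq x \mid i<\alpha\}$ by reinterpreting the constants beyond those mentioned in the given condition, and apply $\alpha$-OTP. The one difference worth noting is that the paper uses \emph{non-strict} inequalities $c_i \leq c_j$, which lets it expand the original model $(W,M)$ directly (all $c_i$ may coincide) and, in the omitting step, send every high-index constant to a single element above $\max\{c_\beta,x\}$ in the given model---so the paper avoids both of your appeals to compactness.
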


\begin{proof}
Let $\Omega\subseteq S^\ext$ be the set of all sorts interpreted by $(W,M)$ as infinite.
Let $C=\{C_s\}_{s\in\Omega}$ be a set of new rigid constants such that $C_s=\{c_i\mid i < \alpha \}$ for all $s\in\Omega$.
Let $T'$ be the theory over $\Delta'[C]$ obtained from $T$ by adding:
\[
\{ \leq\text{ is a linear order on } s \text{ without the greatest element}\}  \cup \{ c_i \leq c_j \mid i < j <\alpha \}
\text{ for each sort } s\in\Omega
\] 
The definition of $T'$ relies on the semantic closure of $\L$ under the relevant
sentence building operators. For example, for nominals, $c_i\leq c_j$ means
$\at{c_i}\pos{\leq}c_j$. 
There exists an expansion $(W',M')$ of $(W,M)$ to the signature $\Delta'[C]$ such that $(W',M')\models T'$. 
For each sort $s\in\Omega$ we define the following type in one variable $x$ of sort $s$:
\[\Gamma^s\coloneqq\{c_i\leq x \mid i<\alpha\} \] 
We show that $T'$ $\alpha$-omits $\Gamma^s$:

\begin{proofsteps}{28em}
let $p\subseteq \Sen(\Delta'[C,x])$ with $\card(p)<\alpha$ such that $T'\cup p$ is satisfiable & \\
$(V,N)\models T'\cup p$ for some Kripke structure $(V,N)$ over $\Delta'[C,x]$ & since $T'\cup p$ is satisfiable \\
$p\subseteq \Sen(\Delta'[C^\beta,x])$ for some $\beta<\alpha$, 
where $C^\beta$ is obtained from $C$ by restricting the constants of sort $s$ to
$C^\beta_{s}\coloneqq\{ c_i\in C_{s}\mid i < \beta\}$
& since $\alpha$ is regular \\
$(V^\beta,N^\beta)\models T\cup p$, where $(V^\beta,N^\beta)\coloneqq (V,N)\red_{\Delta'[C^\beta, x]}$ &
since $(V,N)\models T'\cup p$ and $T\subseteq T'$\\
there exists $w > max\{(V,N)_x,(V,N)_{c_\beta}\}$ & since $\pos{(V,N)_s,(V,N)_\leq}$ is a linear order without the greatest element  \\
$w > (V,N)_{c_i}$ for all $i<\beta$ 
& since $w > (V,N)_{c_\beta}$ and $(V,N)_{c_\beta} \geq (V,N)_{c_i}$ for all $i<\beta$ \\

\label{ps:ls-7}
$(V',N')\models T'\cup p$, where $(V',N')$ is the unique expansion of $(V^\beta,N^\beta)$ to the signature $\Delta'[C,x]$ such that $(V',N')_{c_i}= w$ for all $i\geq\beta$ &
since $(V^\beta,N^\beta)\models T\cup p$ and $w$ is greater than the interpretation of $c_\beta$ in $(V,N)$\\
\label{ps:ls-8} 
$(V',N')\not \models c_i \leq x $ for all $i\geq\beta$ 
& since $(V,N)_x < w $ and $w = (V',N')_{c_i}$ for all $i\geq\beta$\\
$T'$ $\alpha$-omits $\Gamma$ & from \ref{ps:ls-7} and \ref{ps:ls-8}, since $p$ was arbitrarily chosen 
\end{proofsteps}
By Theorem~\ref{th:eott}, there exists a model $(V',N')$ which satisfies $T'$
and omits $\Gamma^s$ for all $s\in\Omega$. 
It follows that $\pos{(V',N')_s,(V',N')_\leq}$ is a linear ordering
of cofinality\footnote{To be more precise, we can select a strictly increasing
  subsequence 
$(c_{i_j}: i_j<\alpha)$ which is unbounded. This sequence is order-isomorphic
with an ordinal $\gamma$, and since $\alpha$ is regular we have
$\gamma=\alpha$. In particular 
$\card(C_s)\geq \alpha$ for each $s\in\Omega$. } 
$\alpha$ for all sorts $s\in\Omega$. 
Let $(V,N)\coloneqq (V',N')\red_\Delta$, and notice that $(V,N)$ satisfies $T$
and its carrier sets corresponding to the sorts in $\Omega$ have cardinalities
greater than or equal to $\alpha$. 
\end{proof}
\section{Omitting types and compactness}\label{9}
In this section, we show that at least at some occasions, compactness is a
necessary condition for proving the Omitting Types Theorem for uncountable signatures.
We work within a fragment $\L$ with the following properties:
\begin{enumerate}[label=P\arabic*)]
\item $\L$ is semantically closed under 
(a)~possibility applied to nominal sentences, 
(b)~retrieve, 
(c)~negation, 
(d)~disjunction, and 
(e)~quantifiers.

\item Signatures have only one rigid sort and all function symbols (except variables) are flexible.
\end{enumerate}
Notice that $\L$ is semantically closed under possibility,
as $\pos{\lambda}\varphi\modelsm \Exists{x}\pos{\lambda}x\wedge
\at{x}\varphi$.

\subsection{Global substitutions} 
We begin by defining a notion of substitution which we then use to derive
compactness for infinite models from $\alpha$-OTP using a technique originally
developed by Lindstr\"om for first-order logic with only relational
symbols~\cite{lind78}. 
Consider a signature $\Delta=(\Sigma^\nom,\Sigma^\rigid\subseteq \Sigma)$ with
only one rigid sort and no rigid function symbols, that is, $S^\nom=\{s_1\}$,
$S^\rigid=S=\{s_2\}$ and $F^\rigid=\emptyset$.  
We define another signature $\Delta_+=(\Sigma^\nom_+,\Sigma^\rigid_+\subseteq \Sigma_+)$ as follows:
\begin{enumerate}
\item $\Sigma^\nom_+$ consists of only one sort, let us say, $s_0$, and
  $\Sigma^\rigid_+$ consists of two sorts $s_1$ and $s_2$. 
\item $\Sigma_+$ is obtained from $\Sigma^\nom$ by adding the following sets of flexible symbols:
\begin{enumerate}
\item $\{\sigma_+:s_1\underbrace{s_2\dots s_2}_{m-times}\to s_2\alt \sigma:\underbrace{s_2\dots s_2}_{m-times}\to s_2\in F\}$ and
\item $\{\pi_+:s_1\underbrace{s_2\dots s_2}_{m-times}\alt \pi:\underbrace{s_2\dots s_2}_{m-times}\in P\}$.
\end{enumerate}
\end{enumerate}
The signature $\Delta_+$ provides a local environment for encoding the Kripke structures over $\Delta$.
The following set of  sentences over $\Delta_+$ ensures that the interpretation of the rigid relation symbols in $\Delta$ is `locally rigid' in $\Delta_+$.
\[
\Gamma\coloneqq \{ \Forall{x_1,x_2,y_1,\dots,y_m}\pi_+(x_1,y_1,\dots,y_m) \Leftrightarrow \pi_+(x_2,y_1,\dots,y_m) \alt \pi:\underbrace{s_2\dots  s_2}_{m-times}\in P^\rigid\}
\]
Let $\mathtt{z}$ be a distinguished nominal variable for $\Delta_+$. 
We define a substitution $(\_)^+\colon\Delta\dra (\Delta_+[\z],\Gamma)$, that is,
\begin{enumerate}
\item a sentence function $(\_)^+\colon \Sen(\Delta)\to \Sen(\Delta_+[\z],\Gamma)$ and 
\item a reduct functor $(\_)^-\colon\Mod(\Delta_+[\z],\Gamma)\to \Mod(\Delta)$,
\end{enumerate}
such that the following global satisfaction condition holds:
$$(W^{\z\leftarrow w},M)\models \gamma^+\text{ iff } (W^{\z\leftarrow w},M)^-\models \gamma$$
for all Kripke structures $(W,M)\in|\Mod(\Delta_+,\Gamma)|$, all possible worlds $w\in |W|$ and all sentences $\gamma\in \Sen(\Delta)$.
\paragraph{Mapping on models}
Notice that a model in $|\Mod(\Delta_+,\Gamma)|$ can be regarded as a collection of Kripke structures over the signature $\Delta$.
Once $\z$ is assigned to a node,
the functor $(\_)^-$ extracts the Kripke structure corresponding to the node denoted by $\z$. 
Concretely,
the functor $(\_)^-\colon\Mod(\Delta_+[\z],\Gamma)\to\Mod(\Delta)$ maps each Kripke structure $(W^{\z\leftarrow w},M)\in|\Mod(\Delta_+[\z],\Gamma)|$ to $(W_w^-,M_w^-)\in|\Mod(\Delta)|$, where
\begin{enumerate}
\item~$W_w^-\coloneqq M_w\red_{\Sigma^\nom}$,
\footnote{Notice that $M_w\in|\Mod(\Sigma_+)|$ and $M_w\red_{\Sigma^\nom}$ is well-defined since $\Sigma^\nom\subseteq \Sigma_+$.}
\item the mapping $M_w^-:M_{w,s_1}\to |\Mod(\Sigma)|$ is defined as follows:
\begin{itemize}
\item For all $v\in M_{w,s_1}$, the carrier set $(M_w^-)_{v,s_2}$ is $M_{w,s_2}$.
\item For all $v\in M_{w,s_1}$ and all $\sigma:\underbrace{s_2\dots s_2}_{m-times}\to s_2\in F$, 
the function $(M_w^-)_{v,\sigma}\colon \underbrace{M_{w,s_2}\times\dots\times  M_{w,s_2}}_{m-times}\to M_{w,s_2}$ is defined by 
$(M_w^-)_{v,\sigma}(a_1,\dots,a_m)\coloneqq M_{w,\sigma_+}(v,a_1,\dots,a_m)$ for all $a_1,\dots,a_m\in M_{w,s_2}$.
\item For all $v\in M_{w,s_1}$ and all $\pi:\underbrace{s_2\dots s_2}_{m-times}\in P$, the relation $(M_w^-)_{v,\pi}\subseteq \underbrace{M_{w,s_2}\times\dots\times M_{w,s_2}}_{m-times}$ is defined by
$(M_w^-)_{v,\pi}\coloneqq \{(a_1,\dots,a_m)\alt (v,a_1,\dots,a_m)\in M_{w,\pi_+}\}$.
\end{itemize}
\end{enumerate}
Since $(W,M)\models \Gamma$, the Kripke structure $(W_w^-,M_w^-)$ interprets all rigid symbols in $P^\rigid$ uniformly across the worlds, which means it is well-defined.
\begin{fact}
The functor $(\_)^-\colon\Mod(\Delta_+[z],\Gamma)\to\Mod(\Delta)$ can be extended to 
$(\_)^-\colon\Mod(\Delta_+[z,X],\Gamma)\to\Mod(\Delta[X])$, where 
$X=\{X_s\}_{s\in S^\ext}$ is a set of variables for $\Delta$,
such that the interpretation of all variables in $X$ is preserved, that is, 
$(W^{z\leftarrow w},M)_x=(W^{z\leftarrow w},M)_x^-$ for all $x\in X$.
\end{fact}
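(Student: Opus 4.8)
The plan is to extend $(\_)^-$ in the only reasonable way -- by transporting the interpretations of the new constants along -- and then to check that everything fits together. The one point that genuinely needs care, and which I would single out as the crux, is a sort mismatch between the two signatures: over $\Delta$ we have $S^\ext=S^\rigid\cup\{\any\}=\{s_1,s_2\}$ with $s_1$ the sort of nominals, so $\Delta[X]$ adds the variables in $X_{s_1}$ to $F^\nom$ and those in $X_{s_2}$ to $F^\rigid$; but over $\Delta_+$ both $s_1$ and $s_2$ are (rigid) sorts of the \emph{local} signature, so $\Delta_+[\z,X]$ adds $\z$ to the nominals of $\Delta_+$ and all of $X$ to the rigid constants of $\Delta_+$. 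Thus a nominal variable of $\Delta$ is, over $\Delta_+$, merely a rigid constant of sort $s_1$, and one must verify that $(\_)^-$ sends its interpretation back into the universe of the extracted frame. In particular, the local models of a $(\Delta_+[\z,X],\Gamma)$-structure are $\Sigma_+[X]$-structures, hence already interpret every $x\in X$.

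I would define the functor on objects as follows. Given $(W^{\z\leftarrow w},M)\in|\Mod(\Delta_+[\z,X],\Gamma)|$, put $(W^{\z\leftarrow w},M)^-\coloneqq(W_w^-,M_w^-)$ exactly as in the construction preceding the statement, with two additions: the frame $W_w^-$ is further taken to interpret each nominal $x\in X_{s_1}$ as $M_{w,x}$, which belongs to $M_{w,s_1}=|W_w^-|$ by the sharing condition of $(W,M)$; and each rigid constant $x\in X_{s_2}$ is interpreted, in every local model $(M_w^-)_v$, as $M_{w,x}\in M_{w,s_2}=(M_w^-)_{v,s_2}$. I would then check that $(W_w^-,M_w^-)$ is a genuine Kripke structure over $\Delta[X]$: non-emptiness of the carriers is inherited; the sharing condition for the new rigid constant $x\in X_{s_2}$ holds because $x$ receives a single value in $M_w$ and hence the same value at every world $v$ of $W_w^-$; and $(W,M)\models\Gamma$ still ensures, just as before, that the interpretation of each rigid relation symbol of $\Delta$ is uniform across the worlds of $W_w^-$, so $M_w^-$ is well-defined.

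Finally I would extend $(\_)^-$ to morphisms by restriction, precisely as for the original functor: a $(\Delta_+[\z,X],\Gamma)$-homomorphism restricts to the extracted local models, and since homomorphisms preserve constants it respects the added constants in $X$, so the restriction is a $\Delta[X]$-homomorphism; functoriality is then immediate. That this is an extension of the original $(\_)^-$ follows by observing that forgetting $X$ turns $\Delta[X]$ into $\Delta$ and $\Delta_+[\z,X]$ into $\Delta_+[\z]$, and that by construction the $\Delta$-reduct of $(W_w^-,M_w^-)$ is exactly the image under the original $(\_)^-$ of $(W^{\z\leftarrow w},M)\red_{\Delta_+[\z]}$, so the square of reduct functors commutes. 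The displayed equality $(W^{\z\leftarrow w},M)_x=(W^{\z\leftarrow w},M)^-_x$ for $x\in X$ is then just a restatement of the two stipulations above; and, if wanted, the global satisfaction condition recorded just before the Fact extends verbatim to $\Delta[X]$-sentences by the same structural induction, with the only new base cases -- atoms mentioning variables from $X$ -- handled by this equality. I anticipate no real difficulty beyond the sort bookkeeping flagged above.
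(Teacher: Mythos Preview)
Your proposal is correct, and in fact considerably more detailed than what the paper offers: the paper states this as a \emph{Fact} without proof and moves directly on to the mapping on sentences. Your identification of the sort bookkeeping as the only genuine subtlety---that nominal variables $x\in X_{s_1}$ for $\Delta$ become ordinary rigid constants of sort $s_1$ over $\Delta_+$, so one must check that $M_{w,x}$ lands in $M_{w,s_1}=|W_w^-|$---is exactly the point that needs care, and your verification is sound.
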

\paragraph{Mapping on sentences}
 We define a mapping on sentences $(\_)^+\colon\Sen(\Delta[X])\to\Sen(\Delta_+[\z,X])$ in three steps, 
where $X=\{X_s\}_{s\in S^\ext}$ is any set of variables for $\Delta$.
\begin{enumerate}[label=S\arabic*)]
\item We define a mapping from the rigid terms over $\Delta[X]$ to the rigid terms over $\Delta_+[\z,X]$ by structural induction:
\begin{itemize}
\item 
$x^+\coloneqq x$, where $x$ is any variable of rigid sort from $X$ , and
\item $(\at{k}\sigma)(t_1,\dots,t_m)^+\coloneqq (\at{\z}\sigma_+)(\at{\z} k,t_1^+,\dots,t_m^+)$, 
where $k\in F^\nom\cup X_\any$, and 
$t_i$ are rigid terms over $\Delta_+[\z,X]$.
\end{itemize}
Notice that $(\_)^+$ is well-defined on rigid terms, as the set of rigid function symbols is empty.
\begin{lemma}
For all Kripke structures $(W,M)\in|\Mod(\Delta_+[X],\Gamma)|$, all possible worlds $w\in|W|$, and all rigid terms $t$ over $\Delta[X]$,
\begin{equation}
(W^{\z\leftarrow w},M)_{t^+}= (W^{\z\leftarrow w},M)^-_{t}.
\end{equation}
\end{lemma}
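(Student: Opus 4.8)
The plan is to prove the identity by structural induction on the rigid $\Delta[X]$-term $t$, using the defining clauses of $(\_)^+$ on rigid terms (step S1), the definition of the model functor $(\_)^-$, and the clause for terms of the form $(\at{k}\sigma)(t_1,\dots,t_m)$ in the interpretation of hybrid terms into Kripke structures. Throughout I would write $(W',M')\coloneqq(W^{\z\leftarrow w},M)$, so that $W'_\z=w$, and $(W_w^-,M_w^-)\coloneqq(W',M')^-$; recall that every subterm of a rigid term is again rigid, so all the term interpretations that occur are world-independent and the statement is well-posed.

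In the base case $t$ is a variable $x$ of rigid sort from $X$, whence $t^+=x$, and the claim $(W',M')_x=(W_w^-,M_w^-)_x$ is exactly the variable-preservation property of the extended reduct functor recorded in the Fact immediately preceding the lemma. In the inductive case $t=(\at{k}\sigma)(t_1,\dots,t_m)$ with $k\in F^\nom\cup X_\any$ and $(\sigma\colon s_2\dots s_2\to s_2)\in F$ ($m$ arguments), so that $t^+=(\at{\z}\sigma_+)(\at{\z}k,t_1^+,\dots,t_m^+)$. For the left-hand side I would apply the $@$-clause of the term semantics twice: once to see that $\at{\z}\sigma_+$ is evaluated in the local model at the world $W'_\z=w$, hence against $M_{w,\sigma_+}$, and once to get $(W',M')_{\at{\z}k}=M_{w,k}$. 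For the right-hand side I would unfold the interpretation of $t$ in $(W_w^-,M_w^-)$ by the same $@$-clause: the symbol $\at{k}\sigma$ is evaluated in the local model at the world $(W_w^-)_k$, and by the definition $(M_w^-)_{v,\sigma}(a_1,\dots,a_m)=M_{w,\sigma_+}(v,a_1,\dots,a_m)$ this produces $M_{w,\sigma_+}\big((W_w^-)_k,-\big)$. The induction hypothesis identifies the arguments $(W',M')_{t_i^+}$ with $(W_w^-,M_w^-)_{t_i}$, so both sides collapse to the same element provided $(W_w^-)_k=M_{w,k}$. These two cases exhaust the structure of rigid $\Delta[X]$-terms, since $F^\rigid=\emptyset$.

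The one genuinely load-bearing step — everything else being an unfolding of definitions — is the equality $(W_w^-)_k=M_{w,k}$, and it is immediate from the construction of $(\_)^-$: the frame $W_w^-$ is by definition the $\Sigma^\nom$-reduct $M_w\red_{\Sigma^\nom}$, and $k$ is a nominal of $\Delta$, i.e.\ a constant of $\Sigma^\nom$, so $(W_w^-)_k$ is literally the denotation $M_{w,k}$ of $k$ in the local model $M_w$; when instead $k\in X_\any$ is a nominal variable, the same conclusion follows from the variable-preservation Fact applied to $k$. I do not expect any real obstacle beyond keeping the bookkeeping of sorts and worlds straight — in particular, being careful that the ``world'' $(W_w^-)_k$ fed to $\sigma$ on the $\Delta$-side is the same object as the first coordinate $M_{w,k}\in M_{w,s_1}$ fed to $\sigma_+$ on the $\Delta_+$-side.
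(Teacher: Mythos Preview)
Your proposal is correct and follows essentially the same approach as the paper: structural induction on rigid $\Delta[X]$-terms with the two cases (variables and $(\at{k}\sigma)(t_1,\dots,t_m)$), invoking the induction hypothesis on the arguments and the defining clause $(M_w^-)_{v,\sigma}(a_1,\dots,a_m)=M_{w,\sigma_+}(v,a_1,\dots,a_m)$ with $v=M_{w,k}$. If anything, you are more explicit than the paper about why $(W_w^-)_k=M_{w,k}$ and about the separate justification when $k\in X_\any$, which the paper subsumes in a single line.
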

\begin{proof}
By structural induction on terms:
\begin{proofcases}
\item[$x\in \{X_s\}_{s\in S^\rigid}$] 
Obviously, $(W^{\z\leftarrow w},M)_x= (W^-_w,M^-_w)_x$.
\item[$\at{k}\sigma(t_1,\dots,t_m)$]  
Let $v\coloneqq (W^{\z\leftarrow w},M)_{\at{\z} k}=M_{w,k}$, and we have:

$(W^{\z\leftarrow w},M)_{(\at{\z}\sigma_+)(\at{\z} k,t_1^+,\dots,t_m^+)}=$
$M_{w,\sigma_+}(v,(W^{\z\leftarrow w},M)_{t_1^+},\dots, (W^{\z\leftarrow w},M)_{t_m^+})$.
By the induction hypothesis,

$M_{w,\sigma_+}(v,(W^{\z\leftarrow w},M)_{t_1^+},\dots, (W^{\z\leftarrow w},M)_{t_m^+})=$
$(M_w^-)_{v,\sigma}((W^-_w,M^-_w)_{t_1},\dots,(W^-_w,M^-_w)_{t_m})=$

$(W^-_w,M^-_w)_{(\at{k}\sigma)(t_1,\dots,t_m)}$.
\end{proofcases}
Since $F^\rigid=\emptyset$, the cases considered above cover all possibilities.
\end{proof}
\item We define the mapping $(\_)^+$ on rigid sentences of the form $\at{k}\varphi\in\Sen(\Delta[X])$ such that 
every rigid sentence will be mapped to a rigid sentence $(\at{k}\varphi)^+\in \Sen(\Delta_+[\z,X])$, which means that
\[
(W^{\z\leftarrow w},M)\models (\at{k}\varphi)^+ \text{ iff } (W^{\z\leftarrow w},M)\models^w (\at{k}\varphi)^+
\]
for all Kripke structures $(W,M)\in|\Mod(\Delta_+[X],\Gamma)|$ and all possible worlds $w\in|W|$.
We proceed by structural induction.

\begin{tabular}{l l}
\begin{minipage}{0.55\textwidth}
\begin{itemize}
\item[]
\item
$(\at{k} k')^+\coloneqq \at{\z} (k = k')$
\item
$(\at{k}\pos{\lambda}k')^+\coloneqq \at{\z}\lambda(k,k')$
\item
$(\at{k}(t_1 = t_2))^+\coloneqq  (\At{k} t_1)^+ = (\At{k} t_2)^+$
\item
$(\at{k}\pi(t_1,\dots,t_m))^+\coloneqq (\at{\z}\pi_+)(\at{\z} k,(\At{k}t_1)^+,\dots,(\At{k}t_m)^+)$
\end{itemize}
\end{minipage}
&
\begin{minipage}{0.45\textwidth}
\begin{itemize}
\item
$(\at{k}\vee \Phi)^+\coloneqq \vee_{\varphi\in\Phi} (\at{k}\varphi)^+$
\item
$(\at{k}\neg\varphi)^+\coloneqq \neg(\at{k}\varphi)^+$
\item
$(\at{k}\Exists{X'}\varphi)^+ \coloneqq \Exists{X'}(\at{k}\varphi)^+$
\item 
$(\at{k} \at{k'}\varphi)^+\coloneqq (\at{k'}\varphi)^+$
\item $(\at{k}\store{x}\varphi)^+\coloneqq (\at{k}\varphi(x\leftarrow k))^+$
\end{itemize}
\end{minipage}
\end{tabular}

\begin{lemma} [Rigid satisfaction condition] \label{lemma:R-SC}
For all sentences $\varphi\in\Sen(\Delta[X])$, 
all nominals $k\in F^\nom\cup X_\any$, 
all Kripke structures $(W,M)\in|\Mod(\Delta_+[X],\Gamma)|$ and 
all possible worlds $w\in|W|$, 
\begin{equation} \label{eq:P2}
(W^{\z\leftarrow w},M) \models (\at{k}\varphi)^+\text{ iff } (W^{\z\leftarrow w},M)^- \models \at{k}\varphi.
\end{equation}
\end{lemma}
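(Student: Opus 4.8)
The plan is to prove \eqref{eq:P2} by structural induction on $\varphi$, following the recursive clauses that define $(\_)^+$ on rigid sentences. Two facts are used throughout. First, $\at{k}\varphi$ is a rigid sentence over $\Delta[X]$ (Lemma~\ref{lemma:rigidification}), and — as one checks clause by clause — $(\at{k}\varphi)^+$ is always a rigid sentence over $\Delta_+[\z,X]$; hence on both sides of \eqref{eq:P2} global satisfaction agrees with local satisfaction at the distinguished world, so it suffices to reason locally at $w$ (respectively at the node $W^-_{w,k}=M_{w,k}$), and this rigidity is an invariant carried along the induction. Second, for every rigid term $t$ over $\Delta[X]$ we may invoke the term translation lemma proved just above, namely $(W^{\z\leftarrow w},M)_{t^+}=(W^{\z\leftarrow w},M)^-_{t}$, which disposes of all term-level bookkeeping.

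For the base cases — $\at{k}k'$, $\at{k}\pos{\lambda}k'$, $\at{k}(t_1=t_2)$, $\at{k}\pi(t_1,\dots,t_m)$ — I would simply unfold both sides using the explicit description of the extracted Kripke structure: $W^-_w=M_w\red_{\Sigma^\nom}$, the carrier at $s_2$ is $M_{w,s_2}$, the operation $\sigma$ at a node $v$ is $(a_1,\dots,a_m)\mapsto M_{w,\sigma_+}(v,a_1,\dots,a_m)$, and the relation $\pi$ at $v$ is the $v$-section of $M_{w,\pi_+}$. For instance, $(W^{\z\leftarrow w},M)\models(\at{k}\pi(t_1,\dots,t_m))^+$, i.e. $(W^{\z\leftarrow w},M)\models(\at{\z}\pi_+)\bigl(\at{\z}k,(\At{k}t_1)^+,\dots,(\At{k}t_m)^+\bigr)$, unfolds to $\bigl(v,(W^{\z\leftarrow w},M)_{(\At{k}t_1)^+},\dots\bigr)\in M_{w,\pi_+}$ with $v=M_{w,k}$; applying the term lemma to the rigid terms $\At{k}t_i$ (recall $\At{k}$ pushes the retrieve down to terms, Definition~\ref{def:rigidification}) and the definition of $(M^-_w)_{v,\pi}$, this says exactly $(W^-_w,M^-_w)\models^{v}\pi(t_1,\dots,t_m)$, that is $(W^{\z\leftarrow w},M)^-\models\at{k}\pi(t_1,\dots,t_m)$. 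The equational case is analogous, using that $(\at{k}(t_1=t_2))^+$ is the equation $(\At{k}t_1)^+=(\At{k}t_2)^+$. Here the hypothesis $(W,M)\models\Gamma$ is precisely what guarantees that $(W^-_w,M^-_w)$ is a well-defined $\Delta$-model (rigid relations interpreted uniformly across worlds), and it is used again when $\pi\in P^\rigid$.

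The inductive cases are routine, because $(\_)^+$ commutes with the sentence-building operators: $(\at{k}\neg\varphi)^+=\neg(\at{k}\varphi)^+$, $(\at{k}\vee\Phi)^+=\vee_{\varphi\in\Phi}(\at{k}\varphi)^+$, $(\at{k}\Exists{X'}\varphi)^+=\Exists{X'}(\at{k}\varphi)^+$, $(\at{k}\at{k'}\varphi)^+=(\at{k'}\varphi)^+$, and $(\at{k}\store{x}\varphi)^+=(\at{k}\varphi(x\leftarrow k))^+$, while semantically $\at{k}\at{k'}\varphi\modelsm\at{k'}\varphi$ and $\at{k}\store{x}\varphi\modelsm\at{k}\varphi(x\leftarrow k)$. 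For negation and disjunction the induction hypothesis plus the rigidity invariant (so that a global negation reduces to the negation of local satisfaction at $w$) and the semantic closure of $\L$ under negation close the case. For the existential quantifier I would use the extended functor $(\_)^-\colon\Mod(\Delta_+[\z,X,X'],\Gamma)\to\Mod(\Delta[X,X'])$ provided by the Fact stated just before the lemma, which preserves the interpretation of the variables in $X\cup X'$; this yields a bijection between the $\Delta_+[\z,X,X']$-expansions of $(W^{\z\leftarrow w},M)$ satisfying $\Gamma$ and the $\Delta[X,X']$-expansions of $(W^{\z\leftarrow w},M)^-$, matching witnesses on the two sides, so the case follows from the induction hypothesis applied to $\varphi$.

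The only genuine obstacle is the atomic relational and equational base cases, where one must keep the $\z$-coordinate straight: checking that evaluating $(\at{\z}\sigma_+)(\at{\z}k,\dots)$ or $(\at{\z}\pi_+)(\at{\z}k,\dots)$ at $(W^{\z\leftarrow w},M)$ really feeds the element $M_{w,k}=W^-_{w,k}$ into the first argument of $\sigma_+$ or $\pi_+$, which is exactly how $(M^-_w)_{v,\sigma}$ and $(M^-_w)_{v,\pi}$ were defined with $v=W^-_{w,k}$. Alongside this, one must verify the rigidity invariant — that every clause of $(\_)^+$ lands in the rigid fragment over $\Delta_+[\z,X]$ — so that the global-versus-local distinction is harmless at each step. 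Everything else is straightforward bookkeeping from the induction hypothesis.
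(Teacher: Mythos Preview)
Your proposal is correct and follows essentially the same approach as the paper: structural induction on $\varphi$, with the base cases unfolded via the term translation lemma and the explicit description of $(W^-_w,M^-_w)$, and the inductive cases handled by the commutation of $(\_)^+$ with the connectives together with the rigidity invariant; the existential case is argued exactly as in the paper, via the extended functor $(\_)^-$ that preserves variable interpretations and sets up the needed correspondence of expansions. The only cosmetic difference is that you make the rigidity invariant explicit as a running hypothesis, whereas the paper leaves it implicit and simply passes between global and local satisfaction at $w$ wherever needed.
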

\begin{proof}
Let $v\coloneqq M_{w,k}$.
We proceed by induction on the structure of $\varphi$:
\begin{proofcases}[itemsep=1ex]
\item[$k'\in {F^\nom}\cup X_\any$]
$(W^{\z\leftarrow w},M)\models(\at{k} k')^+$ iff 
$(W^{\z\leftarrow w},M)\models \at{\z} (k = k')$ iff 
$(W^{\z\leftarrow w},M)\models^w k = k'$ iff
$M_{w,k}=M_{w,k'}$ iff
$(M_w^-)_k=(M_w^-)_{k'}$ iff
$(W^-_w,M^-_w)\models \at{k}k'$.
\item[$\pos{\lambda}k'$]
$(W^{\z\leftarrow w},M)\models (\at{k}\pos{\lambda}k')^+$ iff 
$(W^{\z\leftarrow w},M)\models \at{\z}\lambda(k,k')$ iff 
$(W^{\z\leftarrow w},M)\models^w \lambda(k,k')$ iff
$(M_k,M_{k'})\in M_{w,\lambda}$ iff 
$(W^-_w,M^-_w)\models \at{k}\pos{\lambda}k'$.
\item[$t_1=t_2$] 
$(W^{\z\leftarrow w},M)\models (\at{k}(t_1 = t_2))^+$ iff
$(W^{\z\leftarrow w},M)\models (\At{k} t_1)^+ = (\At{k} t_2)^+$ iff

$(W^{\z\leftarrow w},M)_{(\At{k} t_1)^+} = (W^{\z\leftarrow w},M)_{(\At{k} t_2)^+}$ iff
$(W^-_w,M^-_w)_{\At{k} t_1}=(W^-_w,M^-_w)_{\At{k} t_2}$ iff
$(W^-_w,M^-_w)_{\at{k} t_1}=(W^-_w,M^-_w)_{\at{k} t_2}$
$(W^-_w,M^-_w)\models \at{k} (t_1= t_2)$.
\item[$\pi(t_1,\dots,t_m)$] 
$(W^{\z\leftarrow w},M)\models (\at{k}\pi(t_1,\dots,t_m))^+$ iff
$(W^{\z\leftarrow w},M)\models \at{\z}\pi_+(\at{\z} k,(\At{k}t_1)^+,\dots,(\At{k}t_m)^+)$ iff

$(v,(W^{\z\leftarrow w},M)_{(\At{k}t_1)^+},\dots,(W^{\z\leftarrow w},M)_{(\At{k}t_m)^+})\in M_{w,\pi_+}$ iff
$((W^-_w,M^-_w)_{\At{k} t_1},\dots,(W^-_w,M^-_w)_{\At{k}t_m})\in (M_w^-)_{v,\pi}$ iff
$((W^-_w,M^-_w)_{\at{k} t_1},\dots,(W^-_w,M^-_w)_{\at{k}t_m})\in (M_w^-)_{\at{k}\pi}$ iff
$(W^-_w,M^-_w)\models \at{k}\pi(t_1,\dots,t_m)$.
\item[$\neg\varphi$]
$(W^{\z\leftarrow w},M)\models (\at{k}\neg\varphi)^+$ iff 
$(W^{\z\leftarrow w},M)\models \neg (\at{k}\varphi)^+ $ iff 
$(W^{\z\leftarrow w},M)\models^w \neg (\at{k}\varphi)^+$ iff

$(W^{\z\leftarrow w},M) \not\models^w (\at{k}\varphi)^+$ iff
$(W^{\z\leftarrow w},M) \not\models (\at{k}\varphi)^+$ iff
$(W^-_w,M^-_w) \not\models \at{k}\varphi$ iff
$(W^-_w,M^-_w) \models \at{k}\neg\varphi$.
\item[$\vee\Phi$]
$(W^{\z\leftarrow w},M)\models (\at{k}\vee \Phi)^+$ iff
$(W^{\z\leftarrow w},M)\models \vee_{\varphi\in\Phi} (\at{k}\varphi)^+$ iff
$(W^{\z\leftarrow w},M)\models^w \vee_{\varphi\in\Phi} (\at{k}\varphi)^+$ iff
$(W^{\z\leftarrow w},M)\models^w (\at{k}\varphi)^+$ for some $\varphi\in\Phi$ iff
$(W^-_w,M^-_w)\models \at{k}\varphi$  for some $\varphi\in\Phi$ iff
$(W^-_w,M^-_w)\models \at{k}\vee_{\varphi\in\Phi}\varphi$.
\item[$\Exists{X'}\varphi$] 
Let $(V,N)\coloneqq (W^-_w,M^-_w)$.
Since $(\_)^-$ preserves the interpretation of variables, we have:
\begin{enumerate}
\item for any expansion $(W',M')$ of $(W,M)$ to $\Delta_+[X, X']$,
$(W'^{z\leftarrow w}, M')^-$ is an expansion of $(V,N)$ to $\Delta[X, X']$,
\item for any expansion $(V',N')$ of $(V,N)$ to $\Delta[X, X']$,
there exists an expansion $(W', M')$ of $(W,M)$ to $\Delta_+[X, X']$ such that $(W'^{z\leftarrow w}, M')^-=(V',N')$.
\end{enumerate}
$$\xymatrix{
(V',N') & \Delta[X, X'] \ar@{.>}[rr]^{(\_)^+} & & \Delta_+[\z, X, X'] & (W'^{z\leftarrow w},M')\\
(V,N) & \Delta[X, X'] \ar@{^{(}->}[u] \ar@{.>}[rr]^{(\_)^+} & & \Delta_+[\z,X] \ar@{^{(}->}[u] & (W^{z\leftarrow w},M)\\
& \Delta \ar@{^{(}->}[u] \ar@{.>}[rr]^{(\_)^+}  & & \Delta_+[\z] \ar@{^{(}->}[u] \\
}$$
Based on the remark above, the following are equivalent:
\begin{proofsteps}{32em}
$(W^{\z\leftarrow w},M)\models (\at{k}\Exists{X'}\varphi)^+$ & \\
$(W^{\z\leftarrow w},M)\models \Exists{X'}(\at{k}\varphi)^+$  & by the definition of $(\_)^+$\\
$(W'^{\z\leftarrow w},M')\models (\at{k}\varphi)^+$ for some expansion $(W',M')$ of $(W,M)$ to $\Delta_+[X, X']$ 
& since $(\at{k}\varphi)^+$ is rigid\\
$(V',N')\models \at{k}\varphi $ for some expansion $(V',N')$ of $(V,N)$ to $\Delta[X, X']$ & 
by the induction hypothesis  \\
$(V,N)\models \at{k} \Exists{X'}\varphi $ &
since $\at{k}\varphi^+$ is rigid
\end{proofsteps}
\item[$\at{k'}\varphi$] 
This case is straightforward, since $\at{k}\at{k'}\varphi\modelsm \at{k'}\varphi$.
\item[$\store{x}\varphi$] 
This case is straightforward, since $\at{k}\store{x}\varphi\modelsm \at{k}\varphi[x\leftarrow k]$.
\end{proofcases}
\end{proof}
\item The function $(\_)^+\colon\Sen(\Delta[X])\to\Sen(\Delta_+[\z,X])$ is defined by
$\varphi^+=\Forall{\x}(\at{\x}\varphi)^+$ for all $\varphi\in\Sen(\Delta[X])$, where 
$\x$ is a distinguished nominal variable for $\Delta[X]$.
\begin{proposition}[Global satisfaction condition] \label{prop:GSC}
For all sentences $\varphi\in\Sen(\Delta[X])$, 
all Kripke structures $(W,M)\in|\Mod(\Delta_+[X])|$, and 
all possible worlds $w\in|W|$,
\begin{equation} \label{eq:P3}
(W^{\z\leftarrow w},M)\models \varphi^+ \text{ iff }
(W^{\z\leftarrow w},M)^- \models \varphi.
\end{equation}
\end{proposition}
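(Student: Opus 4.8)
The plan is to obtain the statement from the Rigid satisfaction condition (Lemma~\ref{lemma:R-SC}) together with the semantics of $\at{\x}$ and $\Forall{\x}$, with no further induction on $\varphi$. Throughout, $(W,M)$ is taken to be a model of $\Gamma$ (which is what makes the reduct $(W^{\z\leftarrow w},M)^-=(W^-_w,M^-_w)$ well defined). First I would unfold $\varphi^+=\Forall{\x}(\at{\x}\varphi)^+$. The sentence $(\at{\x}\varphi)^+$ is produced by the mapping on rigid sentences (step~S2 of the construction of $(\_)^+$) over the variable context $X\cup\{\x\}$, hence it is a rigid sentence, so its truth value does not depend on the world of evaluation; and since $s_1$ is a rigid sort of $\Delta_+$, the variable $\x$ is interpreted uniformly across worlds, so the $\Delta_+[\z,X,\x]$-expansions $(W^{\z\leftarrow w},M')$ of $(W^{\z\leftarrow w},M)$ correspond to the choices $M'_\x=v$ with $v\in M_{w,s_1}$. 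Combining these with the (local) semantics of $\Forall{\x}$ gives that $(W^{\z\leftarrow w},M)\models\varphi^+$ holds iff $(W^{\z\leftarrow w},M')\models(\at{\x}\varphi)^+$ for every such expansion $(W^{\z\leftarrow w},M')$.

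Next I would apply Lemma~\ref{lemma:R-SC}, instantiated over the variable context $X\cup\{\x\}$ with the nominal $k=\x\in(X\cup\{\x\})_\any$: for each expansion, $(W^{\z\leftarrow w},M')\models(\at{\x}\varphi)^+$ iff $(W^{\z\leftarrow w},M')^-\models\at{\x}\varphi$. By the Fact that $(\_)^-$ extends to variable contexts while preserving the interpretation of the added variables, $(W^{\z\leftarrow w},M')^-$ is precisely the $\Delta[X,\x]$-expansion of $(W^-_w,M^-_w)$ interpreting $\x$ as $v=M'_\x$; note that $v$ is a \emph{world} of $(W^-_w,M^-_w)$, since $|W^-_w|=M_{w,s_1}$. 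By the semantics of retrieve, this expansion satisfies $\at{\x}\varphi$ iff $(W^-_w,M^-_w)\models^{v}\varphi$ (the occurrence of $\x$ being irrelevant to $\varphi$). As $M'$ ranges over all expansions, $v$ ranges over all of $|W^-_w|$, so chaining the equivalences and unwinding the definition of global satisfaction (cf.~Lemma~\ref{lemma:h-prop}) yields
$$(W^{\z\leftarrow w},M)\models\varphi^+ \quad\text{iff}\quad (W^{\z\leftarrow w},M)^-\models\varphi,$$
which is the claim.

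The only genuine obstacle is the bookkeeping of the three sorts $s_0,s_1,s_2$ and the two signatures: one must verify that $\Forall{\x}$ on the $\Delta_+$-side ranges precisely over the worlds $M_{w,s_1}$ of the extracted structure $(W^-_w,M^-_w)$, that the $\Delta$-nominal variable $\x$ (of sort $s_1$, the $\any$-sort of $\Delta$) may legitimately be treated as a variable of the rigid sort $s_1$ of $\Delta_+$ so that Lemma~\ref{lemma:R-SC} applies with $k=\x$, and that $\Gamma$ holds so that the reduct is defined; everything else is routine unfolding of definitions. This is precisely the rationale behind the definition $\varphi^+=\Forall{\x}(\at{\x}\varphi)^+$: it is the rigid translation $(\_)^+$ of step~S2 applied to $\at{\x}\varphi$ and then universally closed over $\x$, so that it records the global content $\Forall{z}\at{z}\varphi$ of $\varphi$.
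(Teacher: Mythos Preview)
Your proposal is correct and follows essentially the same approach as the paper: unfold $\varphi^+=\Forall{\x}(\at{\x}\varphi)^+$, use the semantics of $\Forall{\x}$ to pass to expansions, apply Lemma~\ref{lemma:R-SC} with $k=\x$, use the preservation of variable interpretations by $(\_)^-$ to identify the reduct of the expansion with the expansion of the reduct, and conclude via $\Forall{\x}\at{\x}\varphi\modelsm\varphi$. Your additional commentary on the sort bookkeeping and the role of $\Gamma$ is more explicit than the paper's proof, but the argument is the same.
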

\begin{proof}
Let $(V,N)\coloneqq (W^-_w,M^-_w)$.
$$\xymatrix{
(V^{\x\leftarrow v},N) & 
\Delta[\x,X] \ar@{.>}[rr]^{(\_)^+} & &
\Delta_+[\z,\x,X] & (W^{\z\leftarrow w},M^{\x\leftarrow v}) \\
(V,N) & \Delta[X] \ar@{^{(}->}[u] \ar@{.>}[rr]^{(\_)^+} & & 
\Delta_+[\z,X] \ar@{^{(}->}[u]  & (W^{\z\leftarrow w},M)\\
& \Delta \ar@{^{(}->}[u] \ar@{.>}[rr]^{(\_)^+}  & & \Delta_+[\z] \ar@{^{(}->}[u] \\
}$$
The following are equivalent:
\begin{proofsteps}{27em}
$(W^{\z\leftarrow w},M)\models \varphi^+$ & \\
$(W^{\z\leftarrow w},M)\models \Forall{\x}(\at{\x}\varphi)^+$ & by the definition of $(\_)^+$ \\
$(W^{\z\leftarrow w},M^{\x\leftarrow v})\models (\at{\x}\varphi)^+$  for any expansion $(W^{\z\leftarrow w},M^{\x\leftarrow v})$ of $(W^{\z\leftarrow w},M)$ to $\Delta_+[\z,\x,X]$
& since $(W^{\z\leftarrow w},M)\models \Forall{\x}(\at{\x}\varphi)^+$ \\
$(V^{\x\leftarrow v},N)\models \at{\x}\varphi$ for any expansion $(V^{\x\leftarrow v},N)$ of $(V,N)$ to $\Delta[\x,X]$ 
 & by Lemma~\ref{lemma:R-SC}, since $(W^{\z\leftarrow w},M^{\x\leftarrow v})^-=(V^{\x\leftarrow v},N)$ \\
$(V,N)\models \Forall{\x}\at{\x}\varphi$ & by semantics\\
$(V,N)\models \varphi$ & by semantics
\end{proofsteps}
\end{proof}
\end{enumerate}
\subsection{Inf-compactness}
We say that $\L$ is \emph{inf-compact} if each set of sentences $\Gamma$ has an infinite model whenever each finite subset $\Gamma_f\subseteq \Gamma$ has an infinite model.
We say that $L$ is \emph{$\alpha$-inf-compact}, where $\alpha$ is an infinite cardinal, if each set of sentences $\Gamma$ of cardinality $\alpha$ has an infinite model whenever each finite subset $\Gamma_f\subseteq \Gamma$ has an infinite model.
We show that inf-compactness is a consequence of omitting type property.

\begin{theorem}
If $\L$ has $\alpha$-OTP, where $\alpha$ is a regular cardinal
then $\L$ is $\beta$-inf-compact for all cardinals $\beta<\alpha$.
\end{theorem}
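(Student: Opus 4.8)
The plan is to reduce $\beta$-inf-compactness of $\L$ to an instance of $\alpha$-OTP applied to the "lifted" signature $\Delta_+$ and the global-substitution machinery $(\_)^+$, $(\_)^-$ developed in the preceding subsection. Suppose $\Gamma\subseteq\Sen(\Delta)$ has cardinality $\beta<\alpha$, and every finite $\Gamma_f\subseteq\Gamma$ has an infinite model. First I would pass to $\Delta_+[\z]$: the key idea is that a model $(W,M)$ of $\Gamma_+\coloneqq\{\gamma^+\mid\gamma\in\Gamma\}$ together with an assignment of $\z$ to a world $w$ yields, via $(\_)^-$ and Proposition~\ref{prop:GSC}, a $\Delta$-model $(W^{\z\leftarrow w},M)^-$ of $\Gamma$. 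Conversely, finitely many sentences of $\Gamma_+$ are simultaneously satisfiable whenever the corresponding finite fragment of $\Gamma$ has an (infinite) model — because an infinite $\Delta$-model can be encoded as a $\Delta_+[\z]$-model with $s_1$ interpreted so that the "collection of Kripke structures" in fact contains the given one at the world named by $\z$. So the theory $T\coloneqq\Gamma_+\cup\Gamma$ (the $\Gamma$ here being the locally-rigid axioms from the construction of $\Delta_+$) is finitely satisfiable, hence satisfiable by the hypothesis that finite fragments have infinite models.

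**The omitted type.** To force the extracted model to be infinite — which is what "inf-compact" demands — I would introduce a set $C=\{c_i\mid i<\beta'\}$ of new rigid constants of sort $s_2$ in $\Delta_+$ for a suitable $\beta'$, together with the type $\Gamma^\infty\coloneqq\{x=c_i\mid i<\omega\}$ in one variable $x$ of sort $s_2$ — more precisely, since we want $M_{w,s_2}$ to be infinite and the type must be genuinely omitted, I would instead use the type expressing "$x$ is not equal to any of infinitely many pairwise-distinct $c_i$"; omitting it forces the $s_2$-carrier to exceed every finite bound. Using that each finite fragment has an \emph{infinite} model, I would verify (by the same pattern as in the proof of Theorem~\ref{th:DLS} and Theorem~\ref{th:ULS}) that $T\cup\{c_i\neq c_j\mid i<j\}$ $\alpha$-omits this type: given any $p$ with $\card(p)<\alpha$ extending $T$ consistently, $p$ mentions fewer than $\alpha$ constants and fewer than $\alpha$ axioms, so an infinite model of the relevant finite fragment of $\Gamma$ can realize a fresh witness equal to some unused $c_i$. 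Then I apply $\alpha$-OTP (available since $\alpha$ is regular and $\geq$ the power of $\Delta_+$ once $\beta<\alpha$) to obtain a model of $T$ omitting $\Gamma^\infty$, whose $s_2$-carrier is therefore infinite; reducing along $(\_)^-$ at the world named by $\z$ gives an infinite $\Delta$-model of $\Gamma$.

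**Main obstacle.** The delicate point is the bookkeeping in the omitting step: one must choose the index set for $C$ and the cardinal parameters so that simultaneously (i) the signature $\Delta_+[\z,C]$ still has power $<\alpha$, so $\alpha$-OTP applies, (ii) the type $\Gamma^\infty$ is genuinely $\alpha$-omitted by $T$, which requires exploiting "infinite model" rather than merely "model" in the finite-satisfiability hypothesis — this is exactly where $\beta$-inf-compactness differs from $\beta$-compactness — and (iii) omitting $\Gamma^\infty$ actually yields an infinite carrier and not merely an unboundedly-large-but-still-finite one, which forces the "not equal to infinitely many distinct constants" formulation of the type. A secondary technical nuisance is checking that the locally-rigid axioms $\Gamma$ for $\Delta_+$ do not interfere with realizing the fresh witnesses, but this is immediate since $s_2$ carries no rigid relation constraints relating it to $C$. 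Everything else — the passage back and forth between $\Delta$ and $\Delta_+[\z]$, and the verification of satisfiability of finite fragments of $T$ — is routine given Proposition~\ref{prop:GSC} and Lemma~\ref{lemma:R-SC}.
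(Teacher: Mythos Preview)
Your argument has a genuine gap at the step ``the theory $T\coloneqq\Gamma_+\cup\Gamma$ \ldots\ is finitely satisfiable, hence satisfiable.'' The passage from finitely satisfiable to satisfiable is precisely compactness, which is what you are trying to prove; and $\alpha$-OTP does not help here, because by Definition~\ref{def:OTP} it applies only to theories that are \emph{already} satisfiable. So as written your plan never produces a model of the full $\Gamma_+$, and the rest of the outline (the omitted type $\Gamma^\infty$, the reduction along $(\_)^-$) has nothing to work with. The discussion of the type $\Gamma^\infty$ is also confused: omitting $\{x\neq c_i\mid i<\omega\}$ forces every element to equal some $c_i$, which bounds rather than unbounds the carrier.

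The paper's proof avoids this circularity by a different use of the $\Delta_+$ encoding. Rather than asking all of $\Gamma_+$ to hold simultaneously, it takes for each $j<\beta$ an infinite model $(W^j,M^j)\models\Phi_j$ of the $j$th initial segment, normalises their carriers to a common set of size $\alpha$, and packs them into a single $\Delta_+$-structure $(W^+,M^+)$ whose possible worlds are indexed by $j$ and whose world $w_j$ encodes $(W^j,M^j)$. Adding nominals $k_j$ and a strict order $<$ on worlds, the theory $T=\Gamma\cup\{\Forall{\z}\,\at{k_i}\pos{<}\z\Rightarrow\varphi_i^+(\z)\mid i<\beta\}$ is satisfied \emph{outright} by this concrete model --- no compactness needed, because each $\varphi_i^+$ is only required at worlds beyond $k_i$, and $(W^j,M^j)\models\varphi_i$ for $j>i$. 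Now $\alpha$-OTP enters through the Upwards L\"owenheim--Skolem Theorem~\ref{th:ULS}: it yields a model of $T$ in which $<$ has cofinality $\alpha$, so (since $\beta<\alpha$) there is a world $v$ above every $k_i$; at that world all $\varphi_i^+$ hold, and $(\_)^-$ extracts the desired model of $\Phi_\beta$. The missing idea in your attempt is exactly this staging of the $\varphi_i$ along an ordered family of worlds, which converts the compactness problem into a cofinality problem that OTP (via ULS) can solve.
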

\begin{proof}
Let $\Delta$ be a signature of power at most $\alpha$.
By induction, it suffices to prove that 
each sequence $\Phi_\beta=\{\varphi_i\in\Sen(\Delta) \alt i<\beta\}$ has an infinite model 
whenever each subsequence $\Phi_j\coloneqq \{\varphi_i\alt i< j\}$ has an infinite model for all $j<\beta$.
Let $\{(W^i,M^i)\in |\Mod(\Delta)|  \alt 0 <  i<\beta \}$ be a sequence of Kripke structures over $\Delta$ such that 
\begin{itemize}
\item the carrier sets of $(W^i,M^i)$ are infinite for  all indexes $j$ with $0<j<\beta$, and
\item $(W^j,M^j)\models \Phi_j$ for all indexes $j$ with $0<j<\beta$.
\end{itemize}
By L\"owenheim-Skolem properties, we can assume that all carrier sets of $(W^i,M^i)$ are of cardinality $\alpha$.
By renaming the elements, we assume furthermore that $|W^i|=|W^j|$ and $M^i_{w,s_2}=M^j_{w,s_2}$ for all $i<j<\beta$ and all possible worlds $w\in |W^i|$.
We define the following Kripke structure $(W^+,M^+)$ over $\Delta_+$:
\begin{itemize}
\item $|W^+|=\{w_i \alt  0< i < \beta \}$, where $\{w_i\alt 0< i < \beta\}$ is a sequence of pairwise distinct possible worlds. 
The carrier sets of $(W^+,M^+)$ for the sorts $s_1$ and $s_2$ are the carrier sets of $(W^i,M^i)$ for the sorts $s_1$ and $s_2$, where $0<i<\beta$. 
\item For all $k\in F^\nom$ and all $0<i<\beta$, we define $M^+_{w_i,k}\coloneqq W^i_k$.

\item For all $\sigma:\underbrace{s_2\dots s_2}_{m-times}\to s_2\in F$ and all $0<i<\beta$, 
the function $M^+_{w_i,\sigma^+}:M^+_{w_i,s_1}\times \underbrace{M^+_{w_i,s_2}\times \dots \times M^+_{w_i,s_2}}_{m-times}\to M^+_{w_i,s_2}$ is defined by $M^+_{w_i,\sigma_+}(a,b_1,\dots,b_m)=M^i_{a,\sigma}(b_1,\dots,b_n)$ for all $(a,b_1,\dots,b_m)\in M^+_{w_i,s_1}\times \underbrace{M^+_{w_i,s_2}\times \dots \times M^+_{w_i,s_2}}_{m-times}$.
\item For all $\pi:\underbrace{s_2\dots s_2}_{m-times}\in P$, we define $M^+_{w_i,\pi}\coloneqq \{ (a,b_1,\dots,b_m) \alt (b_1,\dots,b_m)\in M^i_{a,\pi}\}$. 
\end{itemize}
By the definition of $(W^+,M^+)$, we have   
\begin{equation} \label{eq:P4}
((W^+)^{z\leftarrow w_i},M^+)^-=(W^i,M^i)\text{ for all }i<\beta.
\end{equation}
Let $\Delta_\bullet$ be the signature obtained from $\Delta_+$ by adding a set of new nominals $C=\{k_i\alt 0<i<\beta\}$ and 
a new binary modality $\leq$.
Let $(W^\bullet,M^\bullet)$ be the expansion of $(W^+,M^+)$ to $\Delta_\bullet$ such that
\begin{enumerate}[label=(\alph*)]
\item $W^\bullet_{k_i}=w_i$ for all ordinals $i$ with $0<i<\beta$, and
\item $(w_i,w_j)\in W^\bullet_<$ iff $i<j$.
\end{enumerate} 
Let $T=\Gamma\cup \{\Forall{\z} \at{k_i} \pos{<}\z\Rightarrow \varphi^+_i(\z)  \alt i<\beta \}$.
We show that $(W^\bullet,M^\bullet)\models T$:
\begin{proofsteps}{26em}
\label{ps:ict-0} $(W^\bullet,M^\bullet)\models \Gamma$ & since $(W^+,M^+)\models \Gamma$\\
let $i$ be an ordinal such that $0<i<\beta$ & \\
\label{ps:ict-2} let $w_j\in |W^\bullet|$ such that $(w_i, w_j)\in W^\bullet_<$,
\rlap{meaning that $((W^{\bullet})^{\z\leftarrow w_j},M^\bullet)\models \at{k_i}\pos{<}z$} & \\
$i< j$ & by the definition of $(W^\bullet,M^\bullet)$, since $(w_i, w_j)\in W^\bullet_<$ \\
$((W^+)^{\z\leftarrow w_j},M^+)\models \Phi^+_j$ & 
by Proposition~\ref{prop:GSC} and statement~\ref{eq:P4}, since $(W^j,M^j)\models \Phi_j$ \\
$((W^\bullet)^{\z\leftarrow w_j},M^\bullet)\models \Phi^+_j$ & by the satisfaction condition\\
\label{ps:ict-6} $((W^\bullet)^{\z\leftarrow w_j},M^\bullet)\models \varphi^+_i$ & 
since $\varphi_i\in\Phi_j$\\
\label{ps:ict-7} $(W^\bullet,M^\bullet)\models \Forall{z}\at{k_i} \pos{<} \z \Rightarrow \varphi_i^+$ & 
from \ref{ps:ict-2} and \ref{ps:ict-6}\\
$(W^\bullet,M^\bullet)\models T$ & from \ref{ps:ict-0} and \ref{ps:ict-7}
\end{proofsteps}
By Theorem~\ref{th:ULS}, there exists a model $(V^\bullet,N^\bullet)$ of $T$ such that $(V^\bullet,V^\bullet_\leq)$ is of confinality $\alpha$.
We define $v_i\coloneqq V^\bullet_{k_i}$ for all $i<\beta$.
By cofinality, there exists $v\in V^\bullet_{s_0}$ such that $(v_i,v)\in V^\bullet_<$ for all $i<\beta$.
It follows that $((V^\bullet)^{\z\leftarrow v},N^\bullet) \models \varphi_i^+$ for all $i<\beta$.
Let $(V^+,N^+)\coloneqq (V^\bullet,N^\bullet)\red_{\Delta_+}$.
By the satisfaction condition, $((V^+)^{\z\leftarrow v},N^+)\models \varphi^+_i$ for all $i<\beta$.
By Proposition~\ref{prop:GSC}, $((V^+)^{\z\leftarrow v},N^+)^- \models \varphi_i$ for all $i<\beta$. 
\end{proof}

\section{Conclusion}

In this paper we established an omitting types theorem for first-order hybrid dynamic  logic and sufficiently expressive fragments. For countable signatures, the result followed without needing compactness whereas for uncountable signatures we had to restrict our attention to compact fragments of the logic.  It turns out that the latter restriction is actually necessary for some of these fragments, as compactness is a consequence of OTT for uncountable signatures. We also provided two applications of the OTT: (1) L\" owenheim-Skolem theorems and (2) a completeness theorem for the constructor-based version of  first-order hybrid dynamic  logic. In future work we intend to explore other interesting consequences of OTT in this setting, particularly the Robinson Joint Consistency theorem.

\section*{Acknowledgments}
This paper grew out of some lectures given by George Georgescu on forcing while the first author was a master student at the University of Bucharest. 
The work presented in this paper has been partially supported by the Japanese Contract Kakenhi 20K03718.

\bibliographystyle{ACM-Reference-Format}
\bibliography{ott}
\end{document}